\newtheorem*{definition*}{Definition}
\DeclareMathOperator*{\argmax}{argmax}
\DeclareMathOperator*{\argmin}{argmin}
\definecolor{expert}{HTML}{008000}
\definecolor{error}{HTML}{f96565}
\newcommand{\tikzAngleOfLine}{\tikz@AngleOfLine}
\def\tikz@AngleOfLine(#1)(#2)#3{%
\pgfmathanglebetweenpoints{%
\pgfpointanchor{#1}{center}}{%
\pgfpointanchor{#2}{center}}
\pgfmathsetmacro{#3}{\pgfmathresult}%
}
\declaretheoremstyle[
    headfont=\normalfont\bfseries, 
    bodyfont = \normalfont\itshape]{mystyle}
\newcommand*{\BraceAmplitude}{0.4em}%
\newcommand*{\VerticalOffset}{0.5ex}%
\newcommand*{\HorizontalOffset}{0.0em}%
\newcommand*{\blocktextwid}{3.0cm}%
\NewDocumentCommand{\InsertLeftBrace}{%
	O{} 
	O{\HorizontalOffset,\VerticalOffset} 
	O{\blocktextwid} 
	m   
	m   
	m   
}{%
	\begin{tikzpicture}[overlay,remember picture]
	\coordinate (Brace Top)    at ($(#4.north) + (#2)$);
	\coordinate (Brace Bottom) at ($(#5.south) + (#2)$);
	\draw [decoration={brace, amplitude=\BraceAmplitude}, decorate, thick, draw=black, #1]
	(Brace Bottom) -- (Brace Top) 
	node [pos=0.5, anchor=east, align=left, text width=#3, color=black, xshift=\BraceAmplitude] {#6};
	\end{tikzpicture}%
}%
\NewDocumentCommand{\InsertRightBrace}{%
	O{} 
	O{\HorizontalOffset,\VerticalOffset} 
	O{\blocktextwid} 
	m   
	m   
	m   
}{%
	\begin{tikzpicture}[overlay,remember picture]
	\coordinate (Brace Top)    at ($(#4.north) + (#2)$);
	\coordinate (Brace Bottom) at ($(#5.south) + (#2)$);
	\draw [decoration={brace, amplitude=\BraceAmplitude}, decorate, thick, draw=black, #1]
	(Brace Top) -- (Brace Bottom) 
	node [pos=0.5, anchor=west, align=left, text width=#3, color=black, xshift=\BraceAmplitude] {#6};
	\end{tikzpicture}%
}%
\NewDocumentCommand{\InsertTopBrace}{%
	O{} 
	O{\HorizontalOffset,\VerticalOffset} 
	O{\blocktextwid} 
	m   
	m   
	m   
}{%
	\begin{tikzpicture}[overlay,remember picture]
	\coordinate (Brace Top)    at ($(#4.west) + (#2)$);
	\coordinate (Brace Bottom) at ($(#5.east) + (#2)$);
	\draw [decoration={brace, amplitude=\BraceAmplitude}, decorate, thick, draw=black, #1]
	(Brace Top) -- (Brace Bottom) 
	node [pos=0.5, anchor=south, align=left, text width=#3, color=black, xshift=\BraceAmplitude] {#6};
	\end{tikzpicture}%
}%
\definecolor{cof}{RGB}{219,144,71}
\definecolor{pur}{RGB}{186,146,162}
\definecolor{greeo}{RGB}{91,173,69}
\definecolor{greet}{RGB}{52,111,72}
\theoremstyle{plain}
\newtheorem{theorem}{Theorem}
\newtheorem{lemma}{Lemma}
\newtheorem{remark}{Remark}
\newtheorem{corollary}{Corollary}
\def \bP {\mathbb{P}}
\def \bE {\mathbb{E}}
\def \bR {\mathbb{R}}
\def \cG {\mathcal{G}}
\def \cN {\mathcal{N}}
\def\1{\mathbbm{1}}
\newcommand{\stepa}[1]{\overset{\rm (a)}{#1}}
\newcommand{\stepb}[1]{\overset{\rm (b)}{#1}}
\newcommand{\stepc}[1]{\overset{\rm (c)}{#1}}
\newcommand{\stepd}[1]{\overset{\rm (d)}{#1}}
\newcommand{\stepe}[1]{\overset{\rm (e)}{#1}}
\newcommand{\stepf}[1]{\overset{\rm (f)}{#1}}
\newcommand{\eps}{\varepsilon}
\newcommand{\naturals}{\mathbb{N}}
\newcommand{\lnorm}[2]{\left\|{#1} \right\|_{{#2}}}
\newcommand{\Fnorm}[1]{\lnorm{#1}{\rm F}}
\newcommand{\Opnorm}[1]{\lnorm{#1}{\rm op}}
\definecolor{myblue}{rgb}{.8, .8, 1}
\definecolor{mathblue}{rgb}{0.2472, 0.24, 0.6} 
\definecolor{mathred}{rgb}{0.6, 0.24, 0.442893}
\definecolor{mathyellow}{rgb}{0.6, 0.547014, 0.24}
\newcommand{\calA}{{\mathcal{A}}}
\newcommand{\calB}{{\mathcal{B}}}
\newcommand{\calE}{{\mathcal{E}}}
\newcommand{\calG}{{\mathcal{G}}}
\newcommand{\calN}{{\mathcal{N}}}
\newcommand{\calS}{{\mathcal{S}}}
\newcommand{\calX}{{\mathcal{X}}}
\newcommand{\jiao}[1]{\langle{#1}\rangle}
\crefname{lemma}{Lemma}{Lemmas}
\Crefname{lemma}{Lemma}{Lemmas}
\crefname{thm}{Theorem}{Theorems}
\Crefname{thm}{Theorem}{Theorems}
\Crefname{assumption}{Assumption}{Assumptions}
\Crefname{inftheorem}{Informal Theorem}{Informal Theorems}
\newcommand{\trace}{{\mathsf{Tr}}}
\newcommand{\GW}{{\mathsf{GW}}}
\newcommand{\QMLE}{{\mathsf{QMLE}}}
\newcommand{\id}{{\text{Id}}}
\newcommand{\hS}{\widehat{\Sigma}}
\def\simiid{\stackrel{iid}{\sim}}
\begin{document}

\title{Covariance alignment: from maximum likelihood estimation to Gromov--Wasserstein}

\author{
Yanjun Han\thanks{Courant Institute of Mathematical Sciences and Center for Data Science, New York University, New York NY, United States}
\and
Philippe Rigollet\thanks{Institute for Data, Systems, and Society, Massachusetts Institute of Technology, Cambridge MA, United States} \thanks{Department of Mathematics, Massachusetts Institute of Technology, Cambridge MA, United States}
\and
George Stepaniants\footnotemark[2] \footnotemark[3]
}

\maketitle

\begin{abstract}
Feature alignment methods are used in many scientific disciplines for data pooling, annotation, and comparison. As an instance of a permutation learning problem, feature alignment presents significant statistical and computational challenges. In this work, we propose the \textit{covariance alignment} model to study and compare  various  alignment methods and establish a minimax lower bound for covariance alignment that has a non-standard dimension scaling because of the presence of a nuisance parameter. This lower bound is in fact minimax optimal and is achieved by a natural quasi MLE. However, this estimator involves a search over all permutations which is  computationally infeasible even when the problem has moderate size. To overcome this limitation, we show that the celebrated Gromov--Wasserstein algorithm from optimal transport which is more amenable to fast implementation even on large-scale problems is also minimax optimal. These results give the first statistical justification for the deployment of the Gromov--Wasserstein algorithm in practice.
\end{abstract}

\tableofcontents

\section{Introduction}
Modern research is marked by an unprecedented access to vast computational, data collection, and storage resources. Standing in for hand-crafted and expensive statistical experiments, a deluge of data is available at a scale on which even straightforward statistical techniques have the potential to yield remarkably nontrivial insights. This paradigm shift has transformed countless domains in science, engineering, and management science, to name a few, and also brought a new set of challenges including the integration, or \emph{alignment}, of several datasets collected from different sources.
This work proposes a statistical framework to study the fundamental limits associated with this phenomenon and establishes the optimality of state-of-the-art alignment methods based on optimal transport.

Motivated by applications to biostatistics, we formulate the feature alignment problem as follows. Consider two datasets  that consist of $m$ and $n$ observations respectively. Each dataset can be represented as a features-by-observations matrix:
\begin{align}\label{eq:datasets2}
    X^m ={ \Big[X_1, \hdots, X_m\Big] } \in \bR^{d \times m}, \qquad Y^n = { \Big[Y_1, \hdots, Y_n\Big] } \in \bR^{d \times n}.
\end{align}
The goal of feature alignment is to find a one-to-one  mapping (permutation) $\pi: S \to [d]$ from a subset $S \subseteq [d]$ of features in $X^m$ to a subset $\pi(S) \subseteq [d]$ of features in $Y^n$.

This setup arises for example in untargeted metabolomics, proteomics, and lipidomics studies which record the concentrations of $d$ compounds (metabolites, proteins, and lipids) across a collection of $n$ patients to find biomarker compounds that are associated with the health status of a patient. The term ``untargeted'' refers to the fact that each of the $d$ features corresponds to the concentration level of an unnamed extracted compound measured across all patients in the study. Matching features (compounds) between two such studies is a necessary step for transferring annotated labels between studies, comparing studies to discover shared compounds, and merging multiple studies to increase their sample size and ultimately increase the statistical power of downstream inference tasks.

To investigate this question, we propose and analyze a novel statistical model where the observations are centered Gaussian vectors in the idealized case where there exists a matching of all $d$ features ($S=[d]$). More specifically, let $\Sigma \succ 0$ be an unknown covariance matrix and let $\pi^\star$ be an unknown permutation of $[d]$. We assume that 
$$
X_1, \ldots, X_m \simiid \cN(0, \Sigma)\quad \text{and} \quad Y_1, \ldots, Y_n \simiid \cN(0, \Sigma^{\pi^\star})
$$
where $(\Sigma^{\pi^\star})_{i,j} := \Sigma_{\pi^\star(i), \pi^\star(j)}$ and the two samples are assumed to be independent. It is not hard to see that in this context the sample covariance matrices $\hS_X = X^m(X^m)^\top / m$ and $\hS_Y = Y^n(Y^n)^\top / n$  associated to each of the two samples are sufficient statistics for $\pi^\star$ and the problem boils down to aligning these covariance matrices. We call this problem \emph{covariance alignment}.

Treating the covariance matrix $\Sigma$ as a nuisance parameter and $\pi^\star$ as the parameter of interest, one may write a profile log-likelihood for $\pi$---see~\eqref{eq:loglik}---which can be further simplified into a quasi log-likelihood that is quadratic in $\pi$---see~\eqref{eq:quasiloglik}. Our first estimator is obtained by maximizing this quasi log-likelihood and is thus called the \emph{quasi maximum likelihood estimator} (QMLE in short):
\begin{align}\label{eq:QMLE_intro}
    \widehat \pi^\QMLE = \argmin_{\pi \in S_d}\langle P_\pi\widehat{\Sigma}_X^{-1}P_\pi^\top, \widehat{\Sigma}_Y\rangle,
\end{align}
where $S_d$ denotes the set of all permutations and $P_\pi$ is the permutation matrix associated to $\pi$. 
One contribution of the paper is that the QMLE is optimal in a minimax sense. Unfortunately, the statistical optimality of the QMLE is contrasted by its associated computational burden. Indeed, the QMLE is a quadratic assignment problem that requires optimization over the set $S_d$ of $d!$ permutations, which is not practically feasible, even for moderate values of $d$. To overcome this situation, we borrow from computational optimal transport~\cite{peyre2017computational} and propose a Gromov--Wasserstein (GW) estimator which optimizes a similar objective over the Birkhoff polytope $\text{BP}(d)$ defined in~\eqref{eq:birkhoff_polytope} as the convex hull of the set of permutation matrices:
\begin{equation}
    \label{eq:GWopt}
    \widehat{P}^\GW = \argmax_{P \in \text{BP}(d)}\langle P\widehat{\Sigma}_XP^\top, \widehat{\Sigma}_Y\rangle.
\end{equation}
We show in Section~\ref{sec:estimator} that  $\widehat{P}^\GW=P_{\widehat \pi^\GW}$ is in fact a permutation matrix corresponding to a permutation $\widehat \pi^\GW$ so that this relaxation is tight. Despite the structural similarity between the objective functions in the previous two displays,  we note in Section~\ref{sec:estimator} that the same relaxation for the QMLE is not tight; in other words, the solution to
$$
\argmin_{P \in \text{BP}(d)}\langle P\widehat{\Sigma}_X^{-1}P^\top, \widehat{\Sigma}_Y\rangle,
$$
is not achieved at a permutation in general, but rather in the interior of the Birkhoff polytope. While this observation does not preclude the existence of good rounding schemes, we do not pursue this route. Instead, we show that the GW estimator is also minimax optimal. 

In light of its advantageous computational properties, this work provides theoretical support in favor of the Gromov--Wasserstein matcher recently proposed in a metabolomic study~\cite{breeur2023optimal}.

\begin{remark}
    While the Birkhoff polytope is tractable in the sense that it is convex and described by $O(n^2)$ inequalities, we still have to resort to a heuristic to solve~\eqref{eq:GWopt} because the objective is nonconvex. In fact, it is a convex \emph{maximization} problem and a classical maximum principle indicates that the solution is achieved at a vertex of the Birkhoff polytope, namely a permutation. Despite this lack of convexity, the objective function is smooth and lends itself to the gradient-based heuristic in~\cite{SolPeyKim16} and is successfully deployed in a companion applied work~\cite{breeur2023optimal}.
\end{remark}

\subsection{Main results}

We first recall the covariance alignment model. Let $\Sigma \succ 0$ be an unknown covariance matrix and let $\pi^\star$ be an unknown permutation of $[d]$. Assume that we observe
$$
X_1, \ldots, X_m \simiid \cN(0, \Sigma)\quad \text{and} \quad Y_1, \ldots, Y_n \simiid \cN(0, \Sigma^{\pi^\star})
$$
where $(\Sigma^{\pi^\star})_{i,j} := \Sigma_{\pi^\star(i), \pi^\star(j)}$ and the two samples are assumed to be independent. 

Our goal is to estimate the permutation $\pi^\star$. While $\Sigma$ is a nuisance parameter, it does play an important role in our ability to identify $\pi^\star$. For example, if $\Sigma$ is the identity or has two identical columns, then $\pi^\star$ is not identifiable. From this simple observation, it is clear that the performance of any estimator will be greatly influenced by the structure of $\Sigma$. Rather than imposing structural conditions on $\Sigma$, we propose to evaluate our estimation using a  $\Sigma$-dependent (pseudo-)distance, following an approach similar to the one employed in~\cite{flammarion2019optimal} for statistical seriation. More specifically, for any two permutations  $\pi, \pi' \in S_d$, define
$$
d_\Sigma(\pi, \pi'):=\| \Sigma^{\pi}-\Sigma^{\pi'}\|_{\text{\rm F}}.
$$
It follows readily from the triangle inequality that $d_\Sigma$ is a pseudo-distance for any $\Sigma \succeq 0$. It is less sensitive to errors that consist in permuting two rows/columns that are close in Euclidean norm. More discussions on the loss function can be found in Section \ref{subsec:other_minimax}.

Our main result is the following characterization of the optimal sample complexity for estimating the permutation $\pi$ under the Frobenius loss. 

\begin{theorem}\label{thm:main}
For $\Opnorm{\Sigma}\le 1$ and $\varepsilon\in (0,1)$, both the QMLE $\widehat \pi^\QMLE$~\eqref{eq:QMLE_intro} and the GW estimator $\widehat{\pi}^\GW$~\eqref{eq:GWopt}
achieve $\bE\| \Sigma^{\widehat{\pi}}-\Sigma^{\pi^\star}\|_{\text{\rm F}} \le \varepsilon$ as long as
\begin{align*}
    \min\{m,n\} \ge C\frac{d\log d}{\varepsilon^2} \quad \text{ and } \quad mn\ge C\frac{d^3\log d}{\varepsilon^4}.
\end{align*}
Here $C>0$ is a numerical constant independent of $(d,\varepsilon)$. Up to multiplicative constants, the above conditions on the sample complexity are not improvable in general. 
\end{theorem}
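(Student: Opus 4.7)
I would prove upper and lower bounds separately; the upper bound uses a basic-inequality argument combined with concentration of Gaussian (bi)linear forms, while the lower bound uses Fano's method. For the GW estimator, start from the optimality $\langle P_{\widehat\pi^\GW}\widehat\Sigma_X P_{\widehat\pi^\GW}^\top, \widehat\Sigma_Y\rangle \ge \langle P_{\pi^\star}\widehat\Sigma_X P_{\pi^\star}^\top, \widehat\Sigma_Y\rangle$ together with the population identity $\bE\langle P_{\pi^\star}\widehat\Sigma_X P_{\pi^\star}^\top, \widehat\Sigma_Y\rangle - \bE\langle P_\pi\widehat\Sigma_X P_\pi^\top, \widehat\Sigma_Y\rangle = \tfrac{1}{2}\|\Sigma^\pi-\Sigma^{\pi^\star}\|_F^2$, which follows from $\|\Sigma^\pi\|_F=\|\Sigma\|_F$ and shows that for GW the population excess risk \emph{equals} the squared loss. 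Writing $W_X=\widehat\Sigma_X-\Sigma$ and $W_Y=\widehat\Sigma_Y-\Sigma^{\pi^\star}$, decompose the stochastic deviation into a linear-in-$W_X$ term, a linear-in-$W_Y$ term, and a bilinear term. The algebraic identity $\|P_\pi^\top\Sigma^{\pi^\star}P_\pi-\Sigma\|_F = \|\Sigma^\pi-\Sigma^{\pi^\star}\|_F$ lets me write the $W_X$-linear piece at any $\pi$ as $\langle W_X, N_\pi\rangle$ with $\|N_\pi\|_F$ equal to the loss at $\pi$; a Hanson--Wright bound for Gaussian quadratic forms, together with a union bound over $|S_d|=d!$ (contributing $\log(d!)\asymp d\log d$), then yields on a high-probability event at $\pi=\widehat\pi^\GW$ that $|\text{linear}_X|\lesssim \|\Sigma^{\widehat\pi^\GW}-\Sigma^{\pi^\star}\|_F\sqrt{d\log d/m}+d\log d/m$, and symmetrically for $W_Y$ with $m$ replaced by $n$. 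The bilinear term is a Gaussian chaos of order two whose conditional variance is $\lesssim d^2/(mn)$; Hanson--Wright plus union bound controls it by $\sqrt{d^3\log d/mn}$. Combining via AM--GM to absorb the cross terms gives $\|\Sigma^{\widehat\pi^\GW}-\Sigma^{\pi^\star}\|_F^2 \lesssim d\log d/m + d\log d/n + \sqrt{d^3\log d/mn}$, which is $\lesssim\varepsilon^2$ exactly under the stated sample complexity; a routine truncation passes to the in-expectation bound.

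\textbf{QMLE upper bound.} The QMLE argument follows the same template with $\Sigma^{-1}$ replacing $\Sigma$. The population excess risk becomes $\mathrm{tr}(P_\pi\Sigma^{-1}P_\pi^\top\Sigma^{\pi^\star})-d = 2\mathrm{KL}(\cN(0,\Sigma^{\pi^\star})\,\|\,\cN(0,\Sigma^\pi))$; since $\det\Sigma^\pi=\det\Sigma$ forces $\prod_i \lambda_i=1$ for the eigenvalues $\lambda_i$ of $(\Sigma^\pi)^{-1}\Sigma^{\pi^\star}$, the scalar inequality $\lambda-1-\log\lambda \ge (\lambda-1)^2/(2\max(\lambda,1))$ combined with $\|\Sigma\|_{\mathrm{op}}\le 1$ produces a quadratic lower bound $\gtrsim \|\Sigma^\pi-\Sigma^{\pi^\star}\|_F^2$ (the constant absorbing spectral bounds on $\Sigma$). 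The extra stochastic ingredient is $\|\widehat\Sigma_X^{-1}-\Sigma^{-1}\|_{\mathrm{op}}$ concentration via matrix Bernstein, which is $\lesssim\sqrt{d/m}$ once $m\gtrsim d$; the remaining noise bookkeeping mirrors the GW case.

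\textbf{Lower bound.} For the minimax lower bound I would apply Fano's inequality with two packings. First, at a fixed generic $\Sigma$ (say, diagonal with well-separated entries), a packing $\{\pi_\alpha\}\subset S_d$ of cardinality $\exp(\Omega(d\log d))$ with pairwise $d_\Sigma$-separation $\gtrsim\varepsilon$: since $\mathrm{KL}(\cN(0,\Sigma^{\pi_\alpha})^{\otimes n},\cN(0,\Sigma^{\pi_\beta})^{\otimes n})\asymp n\|\Sigma^{\pi_\alpha}-\Sigma^{\pi_\beta}\|_F^2$, Fano yields $\min(m,n)\gtrsim d\log d/\varepsilon^2$. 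Second, a joint packing $\{(\Sigma_\alpha,\pi_\alpha)\}$ in which $\Sigma_\alpha$ is chosen so that $\Sigma_\alpha^{\pi_\alpha}$ is nearly constant in $\alpha$, so that both $X$- and $Y$-samples contribute to identifiability; optimally balancing $\|\Sigma_\alpha-\Sigma_\beta\|_F$ (controlling the $X$-KL) against $\|\Sigma_\alpha^{\pi_\alpha}-\Sigma_\beta^{\pi_\beta}\|_F$ (controlling the $Y$-KL and the loss) produces the $mn\gtrsim d^3\log d/\varepsilon^4$ bound.

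\textbf{Main obstacle.} I expect the hardest step to be the joint packing for the $mn$ lower bound: one must exhibit a family of $(\Sigma_\alpha,\pi_\alpha)$ of cardinality $\exp(\Omega(d\log d))$ satisfying $m\|\Sigma_\alpha-\Sigma_\beta\|_F^2+n\|\Sigma_\alpha^{\pi_\alpha}-\Sigma_\beta^{\pi_\beta}\|_F^2\lesssim d\log d$ while keeping $\|\Sigma_\alpha^{\pi_\alpha}-\Sigma_\beta^{\pi_\beta}\|_F\gtrsim\varepsilon$, requiring a careful cancellation between perturbations of $\Sigma$ and of $\pi$. On the upper bound side, the crucial observation is the localization identity $\|P_\pi^\top\Sigma^{\pi^\star}P_\pi-\Sigma\|_F = \|\Sigma^\pi-\Sigma^{\pi^\star}\|_F$; without it, the union bound over permutations would cost $\sqrt{d^2\log d/m}$ instead of $\sqrt{d\log d/m}$ per permutation in the linear noise terms, losing the critical factor of $\sqrt{d}$ in the final rate.
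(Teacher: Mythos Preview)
Your upper-bound sketches are essentially the paper's arguments. For GW, the paper also starts from optimality, peels off $\widehat\Sigma_Y\to\Sigma$ and $\widehat\Sigma_X\to\Sigma$ via Hanson--Wright with a union bound over $S_d$, and handles the cross term through $\Fnorm{\widehat\Sigma_X-\Sigma}\lesssim\sqrt{d^2/m}$; the resulting quadratic inequality in $\Fnorm{\Sigma^{\widehat\pi}-\Sigma^{\pi^\star}}$ is identical to yours. For QMLE, the paper uses exactly your observation that $\det\Sigma^\pi=\det\Sigma$ forces $\prod_i\lambda_i=1$, packaged as a ``trace--Frobenius inequality'' $\Fnorm{\Sigma^{-1/2}\Sigma^{\widehat\pi^{-1}}\Sigma^{-1/2}-I}^2\le 4(T+T^2)$ with $T=\langle\Sigma^{\widehat\pi^{-1}}-\Sigma,\Sigma^{-1}\rangle$. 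The only technical difference is that the paper linearizes $\widehat\Sigma_X^{-1}\approx\Sigma^{-1}+\Sigma^{-1}(\Sigma-\widehat\Sigma_X)\Sigma^{-1}$ and bounds the remainder in operator norm, rather than invoking matrix Bernstein directly; both routes work.

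The genuine gap is in your $mn$ lower bound. A packing with \emph{pairwise} KL control cannot deliver $mn\gtrsim d^3\log d/\varepsilon^4$. Concretely, take the paper's prior $\Sigma=(I+\eta S)/2$ with $S$ a random symmetric sign matrix; for two hypotheses $(S_\alpha,\pi_\alpha)$ and $(S_\beta,\pi_\beta)$ one has $\text{KL}_{\alpha\beta}\asymp m\eta^2\Fnorm{S_\alpha-S_\beta}^2+n\eta^2\Fnorm{S_\alpha^{\pi_\alpha}-S_\beta^{\pi_\beta}}^2\asymp(m+n)\eta^2 d^2$. With $\log N\asymp d\log d$ (the packing is over permutations, since the loss only sees $\pi$), Fano forces $\eta^2\lesssim\log d/((m+n)d)$ and yields a risk lower bound $\eta^2 d^2\lesssim d\log d/(m\vee n)$, which is strictly weaker than $\sqrt{d^3\log d/(mn)}$ whenever $m>n$. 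Your proposed ``balancing'' between $\Fnorm{\Sigma_\alpha-\Sigma_\beta}$ and $\Fnorm{\Sigma_\alpha^{\pi_\alpha}-\Sigma_\beta^{\pi_\beta}}$ cannot escape this, because the pairwise $X$-KL is always $\Theta(m\eta^2 d^2)$ once the $\Sigma_\alpha$'s are genuinely different. (Also note your stated separation criterion $\Fnorm{\Sigma_\alpha^{\pi_\alpha}-\Sigma_\beta^{\pi_\beta}}\gtrsim\varepsilon$ is not the loss: the loss at truth $\alpha$ and estimate $\widehat\pi$ is $\Fnorm{\Sigma_\alpha^{\widehat\pi}-\Sigma_\alpha^{\pi_\alpha}}$.)

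What the paper does instead is bound the \emph{mutual information} $I(\pi^\star;X^m,Y^n)$ under a product prior $\pi^\star\perp S$. The crucial gain is that $I(\pi^\star;X^m)=0$ exactly, so the $X$-samples contribute only through the posterior on $\Sigma$ they induce; this cancellation is invisible to pairwise KL. The remaining $I(\pi^\star;Y^n\mid X^m)$ is then (i) reduced to a KL between a \emph{mixture} and $\calN(0,I/2)^{\otimes n}$ via the variational representation, (ii) boosted from $n$ to $m$ samples by a superadditivity lemma for KL of a stationary process against an i.i.d.\ reference (this produces the $n/m$ prefactor), and (iii) evaluated by a $\chi^2$/second-moment computation giving $\bE_{S,T}\det(I-\eta^2 ST)^{-m}$. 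Because $S,T$ are independent sign matrices, $\bE\langle S,T\rangle=0$ and only the variance survives, yielding the $\eta^4$ scaling $I(\pi^\star;X^m,Y^n)\lesssim mn\,d^2\eta^4$ rather than the $\eta^2$ scaling any pairwise argument would give. Choosing $\eta^4\asymp\log d/(mnd)$ then produces the $mn$ bound. Steps (ii) and (iii) are the missing ideas in your plan.
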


The results on the minimax rate of convergence, as well as the high probability upper bounds on the Frobenius loss, are deferred to Theorems \ref{thm:upper_bound_QMLE}-\ref{thm:lower_bound}. Some remarks of Theorem \ref{thm:main} are in order: 

\medskip
\noindent (i) $\min\{m,n\}= \Omega((d\log d)/\varepsilon^2)$ ensures that based solely on $X^m$ or $Y^n$, there is an estimator $\widehat{\pi}$ with $\bE\| \Sigma^{\widehat{\pi}}-\Sigma^{\pi^\star}\|_{\text{\rm F}} \le \varepsilon$ \emph{when $\Sigma$ is known}. In other words, this is the sample complexity of permutation estimation with a known covariance. 

\medskip

\noindent (ii)  $mn= \Omega((d^3\log d)/\varepsilon^4)$  accounts for the effect of an unknown covariance $\Sigma$, and interpolates between two extreme scenarios. First, if $m=\Omega(d^2/\varepsilon^2)$ and $n=\Omega((d\log d)/\varepsilon^2)$, one may estimate $\Sigma$ within Frobenius loss $\varepsilon$ solely based on $X^m$, and then estimate $\pi^\star$ based on $Y^n$ as if $\Sigma$ were known---see (i) above. A more illuminating scenario arises if $m=n$. In that case,  the above condition reduces to $m=n=\Omega( \sqrt{d^3\log d}/\varepsilon^2)$. This peculiar sample complexity is in sharp contrast with the $\Theta(d^2/\varepsilon^2)$ sample size required to estimate the covariance matrix $\Sigma$ within accuracy $\eps$; in other words, the nuisance is allowed to be estimated at a slower rate without hurting the estimation of target parameters. This phenomenon is reminiscent of high-dimensional semiparametric statistics~\cite{chernozhukov2017double}.

\medskip
\noindent (iii)  $\Opnorm{\Sigma}\le 1$ ensures that the hard problem instance appears in the high-dimensional scenario. If it is replaced by the Frobenius norm, the hard instance will become low-dimensional and we no longer enjoy the $\text{poly}(d)$ reduction in the sample complexity compared with covariance estimation. This issue could be further mitigated by considering a rescaled Frobenius loss; we discuss the details in Section \ref{subsec:other_minimax}.

\subsection{Related work}
The estimation of permutations has become a preponderant challenge in various problems. It brings singular challenges, both from a statistical and computational perspective. In the rest of this section, we review some work in this growing area and comment on its connection with our setup and contributions.

\paragraph*{Dataset alignment and biological applications} Dataset alignment is a primitive problem in statistics and machine learning, used broadly in computer vision for matching local descriptors in images~\cite{lowe2004distinctive, bay2006surf, SolPeyKim16}, machine translation for converting words between languages~\cite{AlaGraCut18, alvarez2018gromov, alvarez2019towards, GraJouBer19}, and biostatistics for matching untargeted compounds, transferring labels, and merging unlabeled datasets~\cite{lin2013combinatorial, habra2021metabcombiner, climaco2022finding, breeur2023optimal}. The alignment problem is to find a correspondence between two $d$-dimensional datasets $X^m = [X_1, \hdots, X_m] \in \bR^{d \times m}, Y^n = [Y_1, \hdots, Y_n] \in \bR^{d \times n}$ either by matching their features (rows), matching their observations (columns), or both.

There are several biological applications where the features (rows) of $X^m, Y^n$ are unordered or misaligned, and hence need to be matched. These applications include untargeted metabolomics, proteomics, and lipidomics experiments where unnamed features (compounds such as metabolites, proteins, and lipids) are collected and their concentrations are measured in a set of patients (observations). Untargeted studies are common practice in biology as they can be simpler to perform than targeted studies with labeled features and can discover new biomarker compounds that enable drug discovery and tracking of disease progression~\cite{pirhaji2016revealing, wishart2019metabolomics, loftfield2021novel}. Recent works in metabolomics and proteomics have explored the applications of matching algorithms through combinatorial search, nearest neighbor methods, and optimal transport to solve the untargeted feature alignment problem~\cite{lin2013combinatorial, habra2021metabcombiner, climaco2022finding, breeur2023optimal}.

\paragraph*{Feature alignment and linear assignment} In many machine learning applications, data features have a predefined ordering and the aim is to match the observations in $X^m$ and $Y^n$. Examples include matching a set of patches in two images or word embeddings from two different languages. A classical approach is to form a similarity matrix $A_{ij} = a(X_i, Y_j)$ and maximize the objective
\begin{align}\label{eq:linear_assignment}
    \max_{\pi}\sum_{i=1}^m\sum_{j=1}^n A_{i, \pi(i)}
\end{align}
known as the \textit{(linear) assignment problem}. This combinatorial optimization problem \eqref{eq:linear_assignment} can be solved exactly in polynomial time using linear programming and the Hungarian algorithm~\cite{munkres1957algorithms}, or through optimal transport by lifting the matchings to the space of probabilistic couplings~\cite{peyre2017computational} where the Sinkhorn algorithm has allowed researchers to to process this problem in near-linear time~\cite{Cut13, AltWeeRig17}. A statistical model for the assignment problem was proposed in~\cite{collier2016minimax, galstyan2022optimal} where it is assumed that $X^m$ and $Y^n$ share a subset of observations corrupted with additive noise. When $X^m$ and $Y^n$ are the set of vertices in a bipartite random graph, the sharp statistical properties of \eqref{eq:linear_assignment} have been established in \cite{ding2023planted} for a planted matching problem. 

\paragraph*{Graph matching and quadratic assignment} Another fundamental instantiation of dataset alignment is through \emph{graph matching}, which is closest to our covariance alignment model. Based on $X^m$ and $Y^n$,  form two similarity matrices $A,B\in \mathbb{R}^{d\times d}$ over the features, treat them as weighted undirected graphs, and aim to find a permutation $\pi\in S_d$ such that $A^\pi\approx B$. This is usually done by solving the following combinatorial optimization problem
\begin{equation}\label{eq:quadratic_assignment}
    \max_{\pi \in S_d} \sum_{i=1}^d\sum_{j=1}^d A_{\pi(i)\pi(j)}B_{ij}, 
\end{equation}
This is an example of the \textit{quadratic assignment problem} (QAP)~\cite{koopmans1957assignment}, ``one of the most difficult problems in the NP-hard class''~\cite{LoiAbrBoa07}. This tantalizing difficulty has driven researchers to explore specific instantiations of QAP that are potentially easier to solve. Perhaps the most famous one is graph isomorphism which is strongly believed to not be NP-complete.
Another route to solve this problem in polynomial time is to consider random instances of QAP. Most noticeably, a recent line of research culminating with~\cite{mao2023random,MaoRudTik23} has investigated the alignment of correlated Erd\"os-R\'enyi random graphs from both the statistical and computational perspective.

Although the QAP in \eqref{eq:quadratic_assignment} has appeared in the graph matching literature \cite{hall2023partial, wu2022settling}, our covariance alignment model differs from the above graph matching papers in two aspects. First, under our Gaussian covariance model, the matrices $\hS_X$ and $\hS_Y$ being aligned are Wishart random graphs, departing from the traditional Erd\"os-R\'enyi, Wigner, or random geometric graphs. Second, and more important, all above works assume a \emph{known} distribution of the random matrices $(A, B)$. In contrast, our covariance alignment model treats the true covariance $\Sigma$ as an additional nuisance parameter, and we need to estimate the marginal distributions of $\hS_X$ and $\hS_Y$. To the best of our knowledge, the statistical problem of estimating $\pi$ in this setting has not been explored.

\paragraph*{Gromov--Wasserstein distance} The Gromov-Hausdorff distance~\cite{gromov1999metric} from metric geometry gives us yet another perspective on the quadratic assignment objective \eqref{eq:quadratic_assignment}. Consider two finite metric spaces given by their $d\times d$ pairwise distance metrics, $A$ and $B$ respectively. The Gromov-Hausdorff distance between these two metric spaces can be defined as
$$
d_{\sf GH}(A,B):=\min_{\pi \in S_d}\max_{1\le i,j \le d} \big| A_{ij} - B_{\pi(i)\pi(j)}\big|\,.
$$
Using an idea from optimal transport, M\'emoli~\cite{memoli2011gromov} proposed the Gromov--Wasserstein distance using  a double\footnote{The sup norm is relaxed to the squared Euclidean norm and the set of permutations is relaxed to the Birkhoff polytope.} relaxation:
$$
  d^2_{\sf GW}(A,B):=  \min_{P \in \text{BP}(d)} \sum_{i=1}^d\sum_{j=1}^d\sum_{k=1}^d\sum_{l=1}^dP_{ij}P_{kl}(B_{ik} - A_{jl})^2\,.
$$
Note that the above optimization problem is equivalent to
\begin{equation}\label{eq:gromov_wasserstein}
\max_{P \in \text{BP}(d)} \langle P A P^\top, B\rangle.
\end{equation}
The above formulation \eqref{eq:gromov_wasserstein} is equivalent to the quadratic assignment \eqref{eq:quadratic_assignment} for PSD $A, B \succeq 0$ and has received significant attention from the optimization community with various algorithms ranging from Frank-Wolfe~\cite{ferradans2014regularized, flamary2021pot} to alternating linear programs~\cite{memoli2011gromov} and alternating Sinkhorn updates with entropic regularization~\cite{SolPeyKim16, sejourne2021unbalanced}. Some progress has been made in studying the convergence of GW optimization~\cite{li2022fast, li2023convergent} and exact recovery of matchings for Bernoulli graphs~\cite{lyzinski2015graph}. In parallel, the statistical community has also studied the statistical properties of the Gromov--Wasserstein distance; see~\cite{zhang2022gromov} and references therein. The setup of this line of work is quite different from the one pursued in this paper: it consists in assuming that $d$ points used to form the distance matrices $A$ and $B$ are sampled i.i.d. from two metric measure spaces $\calA$ and $\calB$ respectively, and the goal is to study the convergence of $d_{\sf GH}(A,B)$ to $d_{\sf GH}(\calA,\calB)$ as $d \to \infty$.

The precise objective \eqref{eq:gromov_wasserstein} studied here has been applied to match feature covariance between untargeted metabolomic datasets in~\cite{breeur2023optimal}, while its statistical properties are underexplored. Our work presents the first statistical rates of estimation for GW in learning the matching between two datasets by aligning their covariance matrices.

\paragraph*{Additional literature on permutation estimation} Finally, we remark that our work falls in the broader line of work associated to permutation estimation. While more directly related to the graph matching problem mentioned above, our technical derivations present similarities with other statistical problems parameterized by a permutation. These include statistical seriation~\cite{flammarion2019optimal}, regression with shuffled data~\cite{pananjady2017linear}, isotonic regression~\cite{pananjady2022isotonic}, and noisy sorting~\cite{mao2018minimax}, to name a few.

\subsection{Notation} 
We use the following notations throughout the paper. 

\paragraph*{Scalars, vectors and matrices} For $d \in \naturals$, let $\1_d$ denote the $d$-length vector of all ones. For $n\in \naturals$, let $[n]=\{1,\cdots,n\}$. Let $S_n$ be the set of all permutations over $[n]$. For $\pi \in S_n$ and $A\in \bR^{n\times n}$, let $A^\pi$ be the $n\times n$ matrix with $(A^\pi)_{i,j} = A_{\pi(i), \pi(j)}$. Occasionally we also write $A^\pi$ in equivalent matrix forms as $A^\pi = P_\pi A P_\pi^\top$, where $P_\pi$ is the permutation matrix associated with $\pi$, formally defined as $(P_{\pi})_{i,j} = \1(\pi(i)=j)$. For matrices $A$ and $B$ of the same size, let $\jiao{A,B} = \trace(AB^\top)$ be their matrix inner product. For a matrix $A$, let $\Fnorm{A} = \sqrt{\jiao{A,A}}$ be its Frobenius norm, and $\Opnorm{A} = \max_{\|v\|_2=1} \|Av\|_2$ be its operator norm.  Finally for two real numbers $a$ and $b$, we use the shorthand notation $a\wedge b:=\min(a,b),\ a\vee b=\max(a,b)$.

\paragraph*{Information-theoretic quantities} For probability measures $P$ and $Q$ over the same probability space, let $D_{\text{KL}}(P\|Q)=\int \text{d}P\log(\text{d}P/\text{d}Q)$ and $\chi^2(P\|Q) = \int (\text{d}P)^2/\text{d}Q - 1$ be the Kullback-Leibler (KL) divergence and $\chi^2$ divergence between $P$ and $Q$, respectively. For a random vector $(X,Y,Z)\sim P_{XYZ}$, let $I(X;Y\mid Z)=\bE_{P_Z}[D_{\text{KL}}(P_{XY\mid Z}\|P_{X\mid Z}\otimes P_{Y\mid Z})]$ be the conditional mutual information between $X$ and $Y$ given $Z$. 

\paragraph*{Asymptotics} For non-negative sequences $\{a_n\}$ and $\{b_n\}$, we write $a_n =O( b_n )$ whenever there exists a numerical constant $C<\infty$ such that $a_n \le Cb_n$ for all $n$. Similarly, we write $a_n = \Omega(b_n)$ if $b_n = O(a_n)$, and $a_n = \Theta(b_n)$ or $a_n\asymp b_n$ if both $a_n = O(b_n)$ and $b_n = O(a_n)$. 

\section{Covariance alignment estimators}\label{sec:estimator}
In this section we investigate the statistical performance of two estimators: the quasi maximum likelihood estimator (QMLE) and the Gromov--Wasserstein (GW) estimator. In Section \ref{subsec:estimator}, we introduce both the QMLE and the GW estimator. In Section \ref{subsec:minimax}, we establish the minimax rate-optimality of both estimators. Despite a similar statistical performance, Section \ref{subsec:numerical} discusses the computational benefits of the GW estimator over the QMLE, supported by numerical experiments. 

\subsection{Estimators}\label{subsec:estimator}
We introduce the QMLE and the GW estimator separately.

\subsubsection{Quasi Maximum Likelihood Estimator}
The log-likelihood in the covariance alignment model is given, up to an additive constant, by
\begin{align*}
    \ell(\pi, \Sigma) &= \log\det(\Sigma^{-1}) - \frac{1}{m+n}\sum_{i=1}^{m}X_i^\top\Sigma^{-1}X_i - \frac{1}{m+n}\sum_{i=1}^{n}(P_{\pi^{-1}}Y_i)^\top\Sigma^{-1}(P_{\pi^{-1}}Y_i)\,,
\end{align*}
which is concave in $\Sigma^{-1}$ and maximized at
$$
\widehat \Sigma= \frac{m}{m + n}\hS_X + \frac{n}{m + n}(\hS_Y)^{(\pi^{-1})}, 
$$
leading to the (profile) log-likelihood
\begin{align}
\ell(\pi) := \sup_{\Sigma} \ell(\pi, \Sigma) = -\log\det\Big(\frac{m}{m + n}\hS_X + \frac{n}{m + n}(\hS_Y)^{(\pi^{-1})}\Big) - 1. \label{eq:loglik}
\end{align}
Due to the nonlinearity of the $\log\det$ function, properties of the maximum likelihood estimator obtained by maximizing~\eqref{eq:loglik} are difficult to establish. Instead, consider a quasi-log-likelihood obtained as follows. Note that for that $\pi\approx \pi^\star$, we have
$$
\frac{m}{m + n}\hS_X + \frac{n}{m + n}(\hS_Y)^{(\pi^{-1})}\approx \hS_X\,.
$$
A first-order Taylor expansion of $A\mapsto \log\det(A)$ around $\hS_X$, yields
$$
\log\det (A) \approx \langle \hS_X^{-1}, A-\hS_X\rangle = \langle \hS_X^{-1}, A\rangle + \text{const.}
$$
Hence
$$
\ell(\pi) \approx -\frac{n}{m+n}\langle  \hS_X^{-1}, (\hS_Y)^{(\pi^{-1})}\rangle + \text{const.}
$$
Maximizing this approximation is equivalent to maximizing the quasi log-likelihood $\widetilde \ell$ defined by
\begin{equation}
    \label{eq:quasiloglik}
 \widetilde \ell(\pi):=   \jiao{\hS_Y, (\hS_X^{\pi})^{-1}}\,.
\end{equation}
Consequently, our final quasi MLE $\widehat{\pi}^\QMLE$ is given by
\begin{align}\label{eq:QMLE}
\widehat{\pi}^\QMLE = \argmin_{\pi\in S_d} R_{m, n}^\QMLE(\pi) := \jiao{\hS_Y, (\hS_X^{\pi})^{-1}}. 
\end{align}
This quasi MLE can also be viewed as a two-step estimator where we first estimate the covariance as $\Sigma \approx \hS_X$ and then plug it into the log-likelihood for $\pi$.

\subsubsection{Gromov--Wasserstein Estimator}
The GW estimator aims to solve the quadratic assignment problem \eqref{eq:quadratic_assignment} in random graph matching, with similarity matrices $A,B$ given by the sample covariance matrices $\hS_X, \hS_Y$. Formally, the GW estimator $\widehat{\pi}^\GW$ is given by
\begin{align}\label{eq:GW_estimator}
    \widehat{\pi}^\GW &= \argmax_{\pi\in S_d} \jiao{\hS_X^\pi, \hS_Y} = \argmin_{\pi \in S_d} R_{m, n}^\GW(\pi) := \Fnorm{ \hS_X^{\pi} - \hS_Y }^2. 
\end{align}
The name GW comes from the mathematical theory of optimal transport which lifts the problem of permutation estimation to probabilistic couplings. Specifically, the GW optimization is performed as
\begin{align*}
    \widehat{P}^\GW = \argmax_{P \in \text{BP}(d)}\langle P\widehat{\Sigma}_XP^\top, \widehat{\Sigma}_Y\rangle, 
\end{align*}
where the maximization is over the Birkhoff polytope given by
\begin{equation}\label{eq:birkhoff_polytope}
    \text{BP}(d) = \{P \in \mathbb{R}_+^{d \times d}: P\1_d = P^\top\1_d = \1_d\}.
\end{equation}
Since this is a convex cost in $P$, the maximizer $\widehat{P}^\GW$ is one of the vertices of the Birkhoff polytope which are exactly the set of permutation matrices $S_d$. Therefore, the relaxation to the Birkhoff polytope is tight, while allowing for a broad diversity of continuous optimization algorithms unlike the combinatorial optimization in \eqref{eq:GW_estimator}.~\\

Unlike the GW estimator, the objective for the quasi MLE defined over the set of permutations cannot in general be relaxed to the space of coupling matrices. In fact, even for the limiting case $\hS_X = \hS_Y = \Sigma$ with $m,n = \infty$, von Neumann's trace inequality gives 
\begin{align*}
\min_{\pi\in S_d} \jiao{\Sigma, (\Sigma^{\pi})^{-1}} \ge d. 
\end{align*}
However, for the choice $P = \frac{1}{d}\1_d\1_d^\top \in \text{BP}(d)$ of the coupling matrix, it holds that
\begin{align*}
\jiao{ P\Sigma^{-1}P^\top, \Sigma } = \frac{\1_d^\top \Sigma^{-1} \1_d}{d}\cdot \frac{\1_d^\top \Sigma \1_d}{d},  
\end{align*}
which could be smaller than $d$ for many covariance matrices $\Sigma$ (e.g. when $\Sigma = \text{diag}(1,1/2,\cdots,1/2)$). This means that the minimizing coupling matrix may lie in the interior of the Birkhoff polytope.  

\subsection{Minimax optimality}\label{subsec:minimax}
Here we state the main results showing the minimax optimality of the quasi MLE and GW estimators.
\begin{theorem}[Upper bound for QMLE]\label{thm:upper_bound_QMLE}
Fix $\delta\in (0,1)$. Assume that 
$$
m\wedge n\ge c_0(d\log d+\log(1/\delta))\quad \text{and} \quad mn\ge c_0d(d\log d+\log(1/\delta))(d+\log(1/\delta))
$$
for a large enough numerical constant $c_0>0$. Then for every $(\pi^\star, \Sigma)$ with $\Opnorm{\Sigma}\le 1$, with probability at least $1-\delta$ it holds that
\begin{align*}
    \Fnorm{\Sigma^{\widehat{\pi}^\QMLE} - \Sigma^{\pi^\star}}^2 \lesssim  \frac{d\log d + \log(1/\delta)}{m\wedge n} + \sqrt{\frac{d(d\log d+\log(1/\delta))(d+\log(1/\delta))}{mn}} . 
\end{align*}
\end{theorem}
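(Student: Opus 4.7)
The plan is a standard M-estimator argument: leverage the optimality of $\widehat{\pi}^\QMLE$ to derive a basic inequality, isolate a quadratic curvature term around $\pi^\star$ in the Frobenius metric, and uniformly control three noise terms over $S_d$ via Hanson--Wright-type concentration. The two terms in the claimed rate will emerge as: the first-order deviations of each sample covariance, absorbed into the curvature, giving $(d\log d+\log(1/\delta))/(m\wedge n)$, and a genuinely second-order cross term between the two independent sample covariances, giving the $\sqrt{\cdots/(mn)}$ piece.

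Starting from $\jiao{\hS_Y, (\hS_X^{\widehat{\pi}})^{-1} - (\hS_X^{\pi^\star})^{-1}} \le 0$ and writing $E := \hS_Y - \Sigma^{\pi^\star}$, $F := \hS_X^{-1} - \Sigma^{-1}$, I would use the equivariance $(\hS_X^\pi)^{-1} = P_\pi \hS_X^{-1} P_\pi^\top$ to obtain, for $\pi = \widehat{\pi}$,
\[
S(\pi) := \jiao{\Sigma^{\pi^\star}, (\Sigma^\pi)^{-1} - (\Sigma^{\pi^\star})^{-1}} \;\le\; -N_1(\pi) - N_2(\pi) - N_3(\pi),
\]
where $N_1(\pi) := \jiao{E, (\Sigma^\pi)^{-1} - (\Sigma^{\pi^\star})^{-1}}$, $N_2(\pi) := \jiao{\Sigma^\tau - \Sigma, F}$ with $\tau := \pi^{-1}\pi^\star$ (obtained by rewriting $\jiao{\Sigma^{\pi^\star}, P_\pi F P_\pi^\top - P_{\pi^\star} F P_{\pi^\star}^\top}$ via cyclic trace), and $N_3(\pi) := \jiao{E, P_\pi F P_\pi^\top - P_{\pi^\star} F P_{\pi^\star}^\top}$. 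The curvature lower bound on $S(\pi)$ follows from the identity $S(\pi) = \trace((\Sigma^{\pi^\star} - \Sigma^\pi)(\Sigma^\pi)^{-1})$ together with the fact that $\Sigma^\pi$ and $\Sigma^{\pi^\star}$ share the same spectrum; a direct computation via the doubly-stochastic representation $\trace(UV^{-1}) = \sum_{ij}q_{ij}\lambda_i/\lambda_j$ (where $q_{ij} = (e_i^\top f_j)^2$ encodes the orthogonal change-of-basis between the eigenbases of $U = \Sigma^{\pi^\star}$ and $V = \Sigma^\pi$) yields
\[
S(\pi) \;\gtrsim\; \Fnorm{\Sigma^\pi - \Sigma^{\pi^\star}}^2
\]
with a constant depending on the spectrum of $\Sigma$, controlled by the hypothesis $\Opnorm{\Sigma}\le 1$.

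For each fixed $\pi$, $N_1(\pi)$ is a centered sub-exponential functional of the Wishart fluctuation $E$ with variance proxy $\Fnorm{M_\pi}^2/n$, where $M_\pi := (\Sigma^\pi)^{-1} - (\Sigma^{\pi^\star})^{-1}$ satisfies $\Fnorm{M_\pi} \lesssim \Fnorm{\Sigma^\pi - \Sigma^{\pi^\star}}$. Hanson--Wright concentration combined with a union bound over $|S_d|\le d!$ yields, with probability at least $1-\delta$,
\[
\sup_{\pi\in S_d} |N_1(\pi)| \;\lesssim\; \sqrt{\tfrac{d\log d + \log(1/\delta)}{n}}\;\Fnorm{\Sigma^\pi - \Sigma^{\pi^\star}}.
\]
The term $N_2$ is handled identically after linearizing $F \approx -\Sigma^{-1}(\hS_X - \Sigma)\Sigma^{-1}$ and using the isometry $\Fnorm{\Sigma^\tau - \Sigma} = \Fnorm{\Sigma^\pi - \Sigma^{\pi^\star}}$, yielding the analogous bound with $m$ in place of $n$. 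For the cross term $N_3$, I condition on $\hS_X$; then $\Fnorm{P_\pi F P_\pi^\top - P_{\pi^\star}F P_{\pi^\star}^\top} \le 2\Fnorm{F} \lesssim (d+\sqrt{\log(1/\delta)})/\sqrt{m}$ by Wishart concentration, and Hanson--Wright for the conditional $E$-fluctuation combined with a union bound gives
\[
\sup_{\pi\in S_d} |N_3(\pi)| \;\lesssim\; \sqrt{\tfrac{d\log d+\log(1/\delta)}{n}}\cdot\tfrac{d+\sqrt{\log(1/\delta)}}{\sqrt{m}}.
\]
Plugging these into $c\Fnorm{\Sigma^{\widehat{\pi}}-\Sigma^{\pi^\star}}^2 \le A\Fnorm{\Sigma^{\widehat{\pi}}-\Sigma^{\pi^\star}}+B$ with $A^2\asymp (d\log d+\log(1/\delta))/(m\wedge n)$ and $B$ equal to the $N_3$ bound, solving this quadratic inequality in $\Fnorm{\Sigma^{\widehat{\pi}}-\Sigma^{\pi^\star}}$ yields the stated rate; the sample-size conditions ensure that the Wishart concentration and Taylor-expansion remainders are negligible.

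The hardest step is the uniform control with entropy cost only $d\log d$: the naive estimate $|\jiao{E, M_\pi}|\le \Fnorm{E}\cdot\Fnorm{M_\pi}$ with $\Fnorm{E}\lesssim d/\sqrt{n}$ gives $|\jiao{E,M_\pi}|\lesssim (d/\sqrt{n})\Fnorm{\Sigma^\pi-\Sigma^{\pi^\star}}$ and therefore a rate of order $d^2/n$, worse by a factor of $d/\log d$. The correct route is the variance-adapted sub-exponential tail of $\jiao{E, M_\pi}$, whose variance scales with $\Fnorm{M_\pi}^2/n$ rather than $\Fnorm{E}^2\Fnorm{M_\pi}^2$, combined with a peeling argument to accommodate the $\pi$-dependence of the variance proxy. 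The shared-spectrum structure---signal metric $\Fnorm{\Sigma^\pi - \Sigma^{\pi^\star}}$ and noise metric scaling identically---is what ultimately permits the final entropy cost $\log(d!)=\Theta(d\log d)$.
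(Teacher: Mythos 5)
Your architecture matches the paper's: a basic inequality from the optimality of $\widehat{\pi}^\QMLE$, a curvature term plus three noise terms (your $N_1,N_2,N_3$ correspond to the paper's events $\calE_1,\calE_2$ and its Taylor-remainder/cross terms), Hanson--Wright with a union bound over $d!$ permutations, and a final quadratic inequality. But there is a genuine gap, and it sits exactly where the paper's main technical work lives. The noise multipliers are misidentified: for $N_1(\pi)=\jiao{\hS_Y-\Sigma^{\pi^\star},M_\pi}$ with $Y_i\sim\calN(0,\Sigma^{\pi^\star})$, Hanson--Wright gives variance proxy $\Fnorm{(\Sigma^{\pi^\star})^{1/2}M_\pi(\Sigma^{\pi^\star})^{1/2}}^2/n$, and in any case your claim $\Fnorm{M_\pi}\lesssim\Fnorm{\Sigma^\pi-\Sigma^{\pi^\star}}$ is false: $M_\pi=(\Sigma^\pi)^{-1}(\Sigma^{\pi^\star}-\Sigma^\pi)(\Sigma^{\pi^\star})^{-1}$ carries a factor $\Opnorm{\Sigma^{-1}}^2$, which is uncontrolled under the sole hypothesis $\Opnorm{\Sigma}\le 1$ (take $\Sigma=\text{diag}(1,\epsilon)$ with the swap in $d=2$: $\Fnorm{M_\pi}\asymp 1/\epsilon$ while $\Fnorm{\Sigma^\pi-\Sigma}\asymp 1$). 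The correct multiplier is the \emph{whitened} norm $\Fnorm{(\Sigma^{\pi^\star})^{1/2}(\Sigma^\pi)^{-1}(\Sigma^{\pi^\star})^{1/2}-I_d}$ (similarly for $N_2$ and $N_3$, and for your bound $\Fnorm{\hS_X^{-1}-\Sigma^{-1}}\lesssim (d+\sqrt{\log(1/\delta)})/\sqrt{m}$, which also fails without conjugating by $\Sigma^{\pm 1/2}$). Your assertion that the ``noise metric scales identically'' with $\Fnorm{\Sigma^\pi-\Sigma^{\pi^\star}}$ is precisely the missing lemma, and it is not automatic: the whitened norm can exceed the plain Frobenius distance by an arbitrarily large factor. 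The paper closes this loop with a trace--Frobenius inequality (its Lemma~\ref{lemma:L1_L2} and Corollary~\ref{cor:trace_frob}), exploiting that the eigenvalues $x_i$ of $(\Sigma^{\pi^\star})^{-1/2}\Sigma^{\pi}(\Sigma^{\pi^\star})^{-1/2}$ satisfy $\prod_i x_i=1$ to get $\sum_i(x_i-1)^2\le 4(S+S^2)$ with $S=\sum_i(x_i-1)$; one then solves the quadratic inequality in the trace $S$ first and converts to the Frobenius loss only at the very end. The final rate happens to survive because the whitened norm is $\lesssim\sqrt{S+S^2}$, but that is the statement that must be proved.

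Your curvature step is also not justified as sketched, though the conclusion is salvageable. The orthostochastic matrix $q_{ij}=(e_i^\top f_j)^2$ is not symmetric, so you cannot pair $(i,j)$ with $(j,i)$ to exploit $\lambda_i/\lambda_j+\lambda_j/\lambda_i-2\ge(\lambda_i-\lambda_j)^2$; individual terms $q_{ij}(\lambda_i/\lambda_j-1)$ can be very negative, and only the global constraint $\det\Sigma^\pi=\det\Sigma^{\pi^\star}$ rescues positivity. Moreover, a ``constant depending on the spectrum of $\Sigma$'' is unacceptable: the theorem is uniform over all $\Sigma$ with $\Opnorm{\Sigma}\le 1$ and $\lambda_{\min}(\Sigma)$ is arbitrary. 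A correct and clean route: $S(\pi)$ equals the Bregman divergence of $-\log\det$ between $\Sigma^{\pi^\star}$ and $\Sigma^{\pi}$ (the $\log\det$ terms cancel since the determinants agree), and the integral form of the Bregman divergence, with Hessian quadratic form $\trace(M^{-1}\Delta M^{-1}\Delta)\ge\Fnorm{\Delta}^2/\Opnorm{M}^2\ge\Fnorm{\Delta}^2$ along the segment (where $\Opnorm{M}\le 1$), yields $S(\pi)\ge\tfrac12\Fnorm{\Sigma^\pi-\Sigma^{\pi^\star}}^2$ with a universal constant. That is in fact sharper than what the paper extracts from its Lemma~\ref{lemma:L1_L2} for the curvature direction, but it does not substitute for that lemma in controlling the whitened noise multipliers, which is where your proof currently cannot be completed as written.
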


\begin{theorem}[Upper bound for GW]\label{thm:upper_bound_GW}
Fix $\delta\in (0,1)$, and assume that 
$$
m \wedge n \ge d\log d+\log(1/\delta).
$$
Then for every $(\pi^\star, \Sigma)$ with $\Opnorm{\Sigma}\le 1$, with probability at least $1-\delta$ it holds that
\begin{align*}
    \Fnorm{\Sigma^{\widehat{\pi}^\GW} - \Sigma^{\pi^\star}}^2 \lesssim \frac{d\log d + \log(1/\delta)}{m\wedge n} + \sqrt{\frac{d(d\log d+\log(1/\delta))(d+\log(1/\delta))}{mn}}.
\end{align*}
\end{theorem}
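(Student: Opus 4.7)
} The strategy is a standard basic-inequality argument tailored to the GW criterion. Write $E_X := \hS_X - \Sigma$, $E_Y := \hS_Y - \Sigma^{\pi^\star}$, and $\Delta := \Sigma^{\widehat\pi^\GW} - \Sigma^{\pi^\star}$. Since $\Fnorm{\hS_X^\pi}$ is permutation-invariant, the optimality $R^\GW_{m,n}(\widehat\pi^\GW) \le R^\GW_{m,n}(\pi^\star)$ reduces to $\jiao{\hS_X^{\widehat\pi^\GW}, \hS_Y} \ge \jiao{\hS_X^{\pi^\star}, \hS_Y}$. Substituting the two decompositions and combining the polarization identity $\jiao{\Sigma^\pi, \Sigma^{\pi^\star}} = \Fnorm{\Sigma}^2 - \tfrac12 \Fnorm{\Sigma^\pi - \Sigma^{\pi^\star}}^2$ with the conjugation rule $\jiao{E_X^\pi, \Sigma^{\pi^\star}} = \jiao{E_X, \Sigma^{\pi^\star \circ \pi^{-1}}}$, the inequality rearranges into the \emph{master inequality}
\begin{equation*}
\tfrac{1}{2}\Fnorm{\Delta}^2 \;\le\; \jiao{\Delta,\, E_Y} \;+\; \jiao{E_X,\, \Sigma^{\pi^\star \circ (\widehat\pi^\GW)^{-1}} - \Sigma} \;+\; \jiao{E_X^{\widehat\pi^\GW} - E_X^{\pi^\star},\, E_Y}.
\end{equation*}
Each right-hand term is bounded uniformly over $\pi \in S_d$ so that the randomness of $\widehat\pi^\GW$ is harmless.

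\paragraph*{Linear terms.} For a fixed matrix $M$, Hanson--Wright applied to $\jiao{M, E_X} = \tfrac{1}{m}\sum_i X_i^\top M X_i - \tr(M\Sigma)$ yields sub-exponential tails of order $\Fnorm{M}\sqrt{t/m} + \Opnorm{M}\, t/m$, using $\Opnorm{\Sigma}\le 1$, and analogously for $E_Y$. Observe that $\Fnorm{\Sigma^{\pi^\star \pi^{-1}} - \Sigma} = \Fnorm{\Sigma^\pi - \Sigma^{\pi^\star}}$ by permutation-invariance of the Frobenius norm. A union bound over $|S_d| = d!$ at level $t = d\log d + \log(1/\delta)$ (so $\log|S_d| \asymp d\log d$) gives, uniformly in $\pi \in S_d$,
\begin{equation*}
\bigl|\jiao{\Sigma^\pi - \Sigma^{\pi^\star}, E_Y}\bigr| + \bigl|\jiao{E_X, \Sigma^{\pi^\star\pi^{-1}} - \Sigma}\bigr| \;\lesssim\; \Fnorm{\Sigma^\pi - \Sigma^{\pi^\star}}\sqrt{\frac{d\log d + \log(1/\delta)}{m\wedge n}} + \frac{d\log d + \log(1/\delta)}{m\wedge n},
\end{equation*}
where the trivial bound $\Opnorm{\Sigma^\pi - \Sigma^{\pi^\star}} \le 2$ controls the $\Opnorm{M}\, t/m$ contribution. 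Plugging into the master inequality and applying AM--GM ($\Fnorm{\Delta}\cdot x \le \tfrac14\Fnorm{\Delta}^2 + x^2$) to absorb $\tfrac{1}{4}\Fnorm{\Delta}^2$ on the left produces the first piece $\Fnorm{\Delta}^2 \lesssim (d\log d + \log(1/\delta))/(m\wedge n)$ from the linear terms alone.

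\paragraph*{Cross term and conclusion.} The remaining term $\jiao{E_X^{\widehat\pi^\GW} - E_X^{\pi^\star}, E_Y}$ is handled by conditioning on $E_X$. For each fixed $\pi$, the matrix $M_\pi := E_X^\pi$ is deterministic given $E_X$, with $\Fnorm{M_\pi} = \Fnorm{E_X}$ and $\Opnorm{M_\pi} = \Opnorm{E_X}$. Applying Hanson--Wright conditionally to the linear-in-$E_Y$ form $\jiao{M_\pi, E_Y}$ and union-bounding over $\pi \in S_d$ at the same level $t = d\log d + \log(1/\delta)$ yields
\begin{equation*}
\sup_{\pi \in S_d}\bigl|\jiao{E_X^\pi, E_Y}\bigr| \;\lesssim\; \Fnorm{E_X}\sqrt{\frac{d\log d + \log(1/\delta)}{n}} + \Opnorm{E_X}\cdot\frac{d\log d + \log(1/\delta)}{n}.
\end{equation*}
Unconditioning via standard Wishart concentration (under $\Opnorm{\Sigma}\le 1$) gives $\Fnorm{E_X}^2 \lesssim d(d+\log(1/\delta))/m$ and $\Opnorm{E_X} \lesssim 1$ on the high-probability event guaranteed by $m \gtrsim d + \log(1/\delta)$. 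The dominant contribution is then $\sqrt{d(d+\log(1/\delta))(d\log d + \log(1/\delta))/(mn)}$, precisely matching the second piece of the target bound; the subleading term is absorbed into the first piece under the standing sample-size assumption. Summing with the linear-term bound concludes the proof.

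\paragraph*{Main obstacle.} The delicate step is the cross-term analysis. A blunt Cauchy--Schwarz would yield $|\jiao{E_X^{\widehat\pi^\GW} - E_X^{\pi^\star}, E_Y}| \le 2\Fnorm{E_X}\Fnorm{E_Y} \asymp d^2/\sqrt{mn}$, which misses the target by a factor of roughly $\sqrt{d/\log d}$. The decisive refinement is to condition on $E_X$ and view $\jiao{E_X^\pi, E_Y}$ as a sub-exponential linear statistic of $E_Y$ with scale only $\Fnorm{E_X}/\sqrt{n}$ (not $\Fnorm{E_X}\Fnorm{E_Y}$), so that the union-bound cost over $S_d$ is $\sqrt{d\log d}$ rather than the ambient $\sqrt{d^2}$. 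This is the mechanism that permits the nuisance covariance to be estimated at a rate slower than $\sqrt{d^2/m}$ without degrading the rate for the target permutation $\pi^\star$.
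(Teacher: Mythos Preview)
Your proposal is correct and follows essentially the same approach as the paper's proof: both rest on Hanson--Wright with a union bound over $S_d$ (applied to $E_Y$ conditionally on $X^m$ and to $E_X$ unconditionally) together with Wishart concentration for $\Fnorm{E_X}$. The only difference is organizational: you write an explicit additive master inequality with two linear terms and one cross term, whereas the paper proceeds sequentially---first replacing $\hS_Y$ by $\Sigma$ (event $\calE_1$, which bundles your term (I) and your cross term since $M=\hS_X-\hS_X^{\widehat\pi}$), then replacing $\hS_X$ by $\Sigma$ (event $\calE_2$, matching your term (II)), and finally controlling the remainder $R_1$ via the triangle inequality and Wishart concentration (event $\calE_3$)---ultimately solving the same quadratic inequality $\Fnorm{\Delta}^2 \le u\Fnorm{\Delta}+v$.
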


Integrating the tails over $\delta\in (0,1)$ we readily obtain the sample complexity upper bounds in Section \ref{thm:main}. Note that Theorem \ref{thm:upper_bound_QMLE} holds under an additional condition on the product $mn$ which does not appear in Theorem \ref{thm:upper_bound_GW}; we believe this extra condition to be superfluous. It comes from a technicality in our analysis that would be overcome using a concentration result for linear functionals of inverse-Wishart random matrices. In absence of such a result, we perform a local Taylor approximation instead. On the other hand, this condition remains necessary for either bound to be meaningful so we do not pursue this question here.

The following minimax lower bound shows that the rates of convergence in Theorem \ref{thm:upper_bound_QMLE} and \ref{thm:upper_bound_GW} are tight.

\begin{theorem}[Minimax lower bound]\label{thm:lower_bound}
Given observations $X^m\sim \cN(0,\Sigma)^{\otimes m}$ and $Y^n\sim \calN(0,\Sigma^{\pi^\star})^{\otimes n}$, the following minimax lower bound holds: if $m\wedge n\ge \log d$ and $m\vee n \ge d$, there exists an absolute constant $c_0>0$ independent of $(m,n,d)$ such that
\begin{align*}
\inf_{\widehat{\pi}}\sup_{\pi^\star, \Sigma: \Opnorm{\Sigma} \le 1} \mathbb{E}_{\Sigma} \Fnorm{\Sigma^{\widehat{\pi}} - \Sigma^{\pi^\star}}^2  \ge c_0\left(\frac{d\log d}{m\wedge n} + \sqrt{\frac{d^3\log d}{mn}}\right). 
\end{align*}
\end{theorem}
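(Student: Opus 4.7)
The plan is to establish the two summands of the lower bound separately by applying Fano's inequality to two different families of hypotheses $\{(\Sigma_\theta, \pi^\star_\theta)\}_{\theta \in \Theta}$. For any such family, the pairwise KL divergence between the joint laws of $(X^m, Y^n)$ under two hypotheses factorizes as
\[
m\cdot D_{\mathrm{KL}}\bigl(\cN(0,\Sigma_{\theta_1}) \,\|\, \cN(0,\Sigma_{\theta_2})\bigr) + n\cdot D_{\mathrm{KL}}\bigl(\cN(0,\Sigma_{\theta_1}^{\pi^\star_{\theta_1}}) \,\|\, \cN(0,\Sigma_{\theta_2}^{\pi^\star_{\theta_2}})\bigr),
\]
and Fano's inequality yields $\bE\Fnorm{\Sigma^{\widehat\pi} - \Sigma^{\pi^\star}}^2 \gtrsim \Delta^2$ once the packing has pairwise squared loss separation at least $\Delta^2$ and pairwise KL at most $\lesssim \log|\Theta|$.

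For the first summand $(d\log d)/(m\wedge n)$, the model has an $m \leftrightarrow n$ symmetry: the pair $(X^m, Y^n)$ under $(\Sigma, \pi^\star)$ has the same law as $(Y^n, X^m)$ under $(\Sigma^{\pi^\star}, (\pi^\star)^{-1})$, so WLOG $n \le m$. I would fix a single well-chosen base covariance $\Sigma_0$ with $\Opnorm{\Sigma_0}\le 1$ and a narrow but distinct spectrum, and take the hypotheses to be $\{(\Sigma_0, \pi) : \pi \in \Pi\}$ for a combinatorial packing $\Pi \subset S_d$. Since $\Sigma$ is held constant across the family, only the $Y^n$ term contributes to the KL, so the KL between any two hypotheses is of order $n \cdot \Fnorm{\Sigma_0^{\pi_1} - \Sigma_0^{\pi_2}}^2$. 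A standard permutation packing of size $\exp(\Omega(d\log d))$ with pairwise squared Frobenius separation $\asymp (d\log d)/n$ together with Fano then yields the first summand.

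For the second summand $\sqrt{(d^3\log d)/(mn)}$, the construction must couple the variation of $\Sigma_\theta$ and $\pi^\star_\theta$ so that both marginals contribute simultaneously to the KL budget. A natural design is to start from the identity (or another highly symmetric base) and add rank-one or low-rank perturbations of scale $\eta$ coupled to $\pi_\theta$, calibrating $\eta$ so that the total KL across the packing remains $O(d\log d)$ while the pairwise squared loss separation $\Fnorm{\Sigma_{\theta_1}^{\pi_{\theta_1}}-\Sigma_{\theta_2}^{\pi_{\theta_2}}}^2$ is of order $\sqrt{(d^3\log d)/(mn)}$. The multiplicative $mn$ reflects the bilinear structure of the GW-type matching statistic $\langle \hS_X^\pi, \hS_Y\rangle$, whose fluctuations scale as $1/\sqrt{mn}$; distinguishability between hypotheses thus requires the signal to exceed this noise floor, explaining the product scaling.

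The main obstacle is the construction of the hypothesis family for the second summand. One must exhibit $\exp(\Omega(d\log d))$ hypotheses that simultaneously satisfy (i) the admissibility constraint $\Opnorm{\Sigma_\theta}\le 1$, (ii) a uniform KL upper bound of $O(d\log d)$, and (iii) a uniform lower bound of order $\sqrt{(d^3\log d)/(mn)}$ on the $\theta$-dependent squared loss $\Fnorm{\Sigma_\theta^{\pi_\theta} - \Sigma_{\theta'}^{\pi_{\theta'}}}^2$. A further subtlety is that the loss itself depends on the true $\Sigma_\theta$, so the separation must be verified between every pair rather than against a reference point. The natural route is a randomized construction (low-rank perturbation vectors drawn from a rotationally invariant distribution combined with uniformly random permutations) followed by a union bound over pairs, using the small-perturbation Gaussian KL expansion $D_{\mathrm{KL}}(\cN(0,\Sigma_0+\Delta)\,\|\,\cN(0,\Sigma_0)) \approx \tfrac12 \Fnorm{\Sigma_0^{-1/2}\Delta\Sigma_0^{-1/2}}^2$ and concentration of Gaussian quadratic forms to control both the KL and the loss separation with high probability.
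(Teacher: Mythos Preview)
Your approach for the first summand $(d\log d)/(m\wedge n)$ is fine and is essentially what the paper does (via the simple mutual-information bound $I(\pi^\star;X^m,Y^n)\le nd^2\eta^2/2$ with $\Sigma$ held near a fixed base).

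For the second summand $\sqrt{(d^3\log d)/(mn)}$, however, the pairwise-KL Fano scheme you outline cannot succeed. Your KL decomposition
\[
m\cdot D_{\mathrm{KL}}\bigl(\cN(0,\Sigma_{\theta_1})\,\|\,\cN(0,\Sigma_{\theta_2})\bigr) + n\cdot D_{\mathrm{KL}}\bigl(\cN(0,\Sigma_{\theta_1}^{\pi_{\theta_1}})\,\|\,\cN(0,\Sigma_{\theta_2}^{\pi_{\theta_2}})\bigr)
\]
is additive in $m$ and $n$; no calibration of $\eta$ can turn a sum $m\alpha + n\beta$ into a constraint that produces the product $mn$ in the denominator of the rate. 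Concretely, the $Y^n$ KL term equals (up to constants) $n$ times the very loss separation you want, so requiring the KL budget $\lesssim d\log d$ forces separation $\lesssim (d\log d)/n$ regardless of how $\Sigma_\theta$ varies or how clever the coupling with $\pi_\theta$ is. The $X^m$ term only adds a further constraint and never helps.

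The paper's route is genuinely different and is what closes this gap: it places a prior on the nuisance $\Sigma=(I_d+\eta S)/2$ with $S$ a random symmetric Rademacher matrix, and bounds the mutual information $I(\pi^\star;X^m,Y^n)$ with $\Sigma$ \emph{marginalized out}. Because $X^m\perp\pi^\star$, this reduces to $\bE_{X^m}D_{\mathrm{KL}}(P_{Y^n\mid X^m}\|\cN(0,I_d/2)^{\otimes n})$, where $P_{Y^n\mid X^m}$ is a \emph{mixture} over $\Sigma$. A superadditivity argument for KL of stationary processes against an i.i.d.\ reference then reduces unequal sample sizes to the balanced case (giving the factor $n/m$), and the remaining KL of a mixture against a product is bounded via $\chi^2$ and the second-moment method. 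The second-moment computation is what yields $\eta^4$ rather than $\eta^2$ (hence the $mn$ product): one gets $I(\pi^\star;X^m,Y^n)=O(mn\,d^2\eta^4)$, and optimizing $\eta$ gives $\Delta^2\asymp\eta^2d^2\asymp\sqrt{(d^3\log d)/(mn)}$. Your intuition about the bilinear $1/\sqrt{mn}$ fluctuation is correct, but capturing it in a lower bound requires the mixture/second-moment machinery, not pairwise KL between deterministic hypotheses.
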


\noindent We note that the conditions $m\vee n\ge d$ and $m\wedge n\ge \log d$ in Theorem \ref{thm:lower_bound} are not entirely superfluous: they ensure that the minimax lower bound is no larger than $\Theta(d)$, as $\|\Sigma^{\widehat{\pi}} - \Sigma^{\pi^\star}\|_{\text{F}}^2 = O(d)$ trivially holds for every $\widehat{\pi}$.

\begin{figure}[t]
    \centering
    \includegraphics[width=\textwidth]{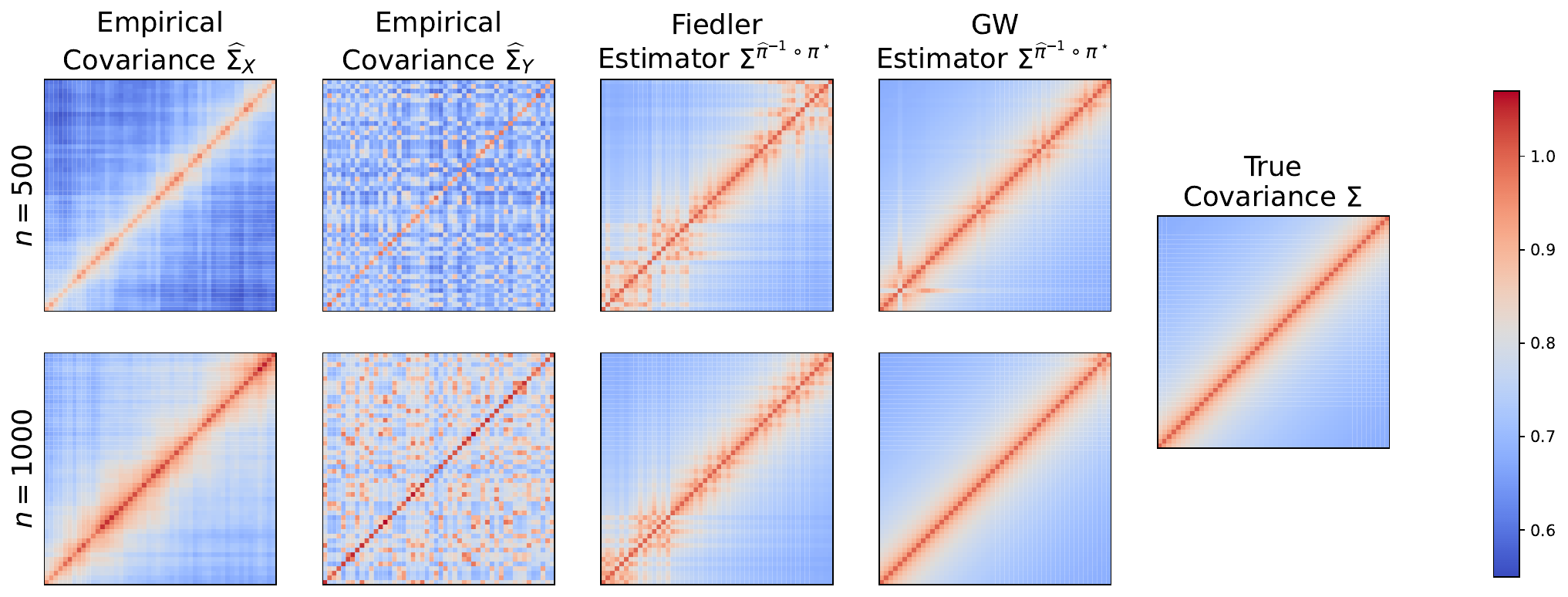}
    \caption{Covariance alignment for structured Robinson covariance matrices $\Sigma \in \mathbb{R}^{d \times d}$ with $\Sigma_{ij} = (1 + |i - j|)^{-\gamma}$ and decaying off-diagonals for dimension $d = 50$. Here we compare the Sinkhorn GW estimator to a classical Fiedler vector spectral estimator. Across all sample sizes $n$ we find that GW outperforms the spectral estimator and almost perfectly reconstructs the structured covariance matrix at $n = 1,000$.}
    \label{fig:structural_example}
\end{figure}

\subsection{Numerical experiments}\label{subsec:numerical}
Although the GW estimator and the QMLE are both minimax optimal, the GW estimator presents significant computational advantages over the QMLE. The objective of the QMLE~\eqref{eq:QMLE} is a general quadratic assignment problem over the set of permutation matrices, and it is NP-hard to find a constant approximation for QAPs in general \cite{sahni1976p}. In contrast, although the GW estimator \eqref{eq:GW_estimator} is also NP-hard, the lifting to probabilistic couplings offers more computational flexibility in the development of numerical heuristics as gradient information can now be used and potentially leads to better convergence results empirically. This approach has been successfully leveraged in a variety of contexts~\cite{SolPeyKim16, peyre2017computational}. As discussed above, the combinatorial optimization \eqref{eq:QMLE} for the QMLE cannot be lifted to probabilistic couplings, for \eqref{eq:QMLE} is a minimization of convex functions and gives a non-permutation after the lifting. In the experiments below, we optimize the GW objective \eqref{eq:GW_estimator} over the space of couplings by adding a small entropic penalty (with penalty parameter $\varepsilon>0$) to the cost and using Sinkhorn projections~\cite{SolPeyKim16, peyre2017computational, flamary2021pot}. At convergence, we round the final coupling to the closest permutation matrix by solving a linear assignment problem; we call it the Sinkhorn GW estimator. We also optimize both GW and QMLE objectives directly over the set of permutation matrices using the commercial LocalSolver optimizer, a state-of-the-art local search mixed-integer program solver~\cite{benoist2010toward}.

We begin with a structured covariance alignment problem where $\Sigma \in \mathbb{R}^{d \times d}$ with $d = 50$ is a Robinson matrix with decaying off-diagonals $\Sigma_{ij} = (1 + |i - j|)^{-\gamma}$ for $\gamma = 0.1$. In the experiment we set $m=n$, and compare two estimators of $\pi^\star$: one is the Sinkhorn GW estimator with $\varepsilon = 5 \times 10^{-4}$, and the other is a classical spectral estimator~\cite{atkins1998spectral} which estimates the permutation $\widehat{\pi}$ by sorting the entries of the Fiedler vector (the eigenvector for the smallest nonzero eigenvalue) of the unnormalized Laplacian matrix $L = \text{diag}(\hS_Y\1_d) - \hS_Y$. We note that the second estimator crucially relies on the Robinson structure of $\Sigma$ where the Fiedler vector is monotone. Figure \ref{fig:structural_example} displays the visualizations for $\hS_X, \hS_Y$, and the permuted covariances for both estimators under two sample sizes $n\in \{500, 1000\}$, and shows that the GW estimator achieves a smaller Frobenius loss than the Fiedler vector estimator. In particular, it is strikingly visible that when $n = 1,000$, the GW estimator almost perfectly recovers the structured covariance $\Sigma$. 

\begin{figure}[t]
    \centering
    \includegraphics[width=\textwidth]{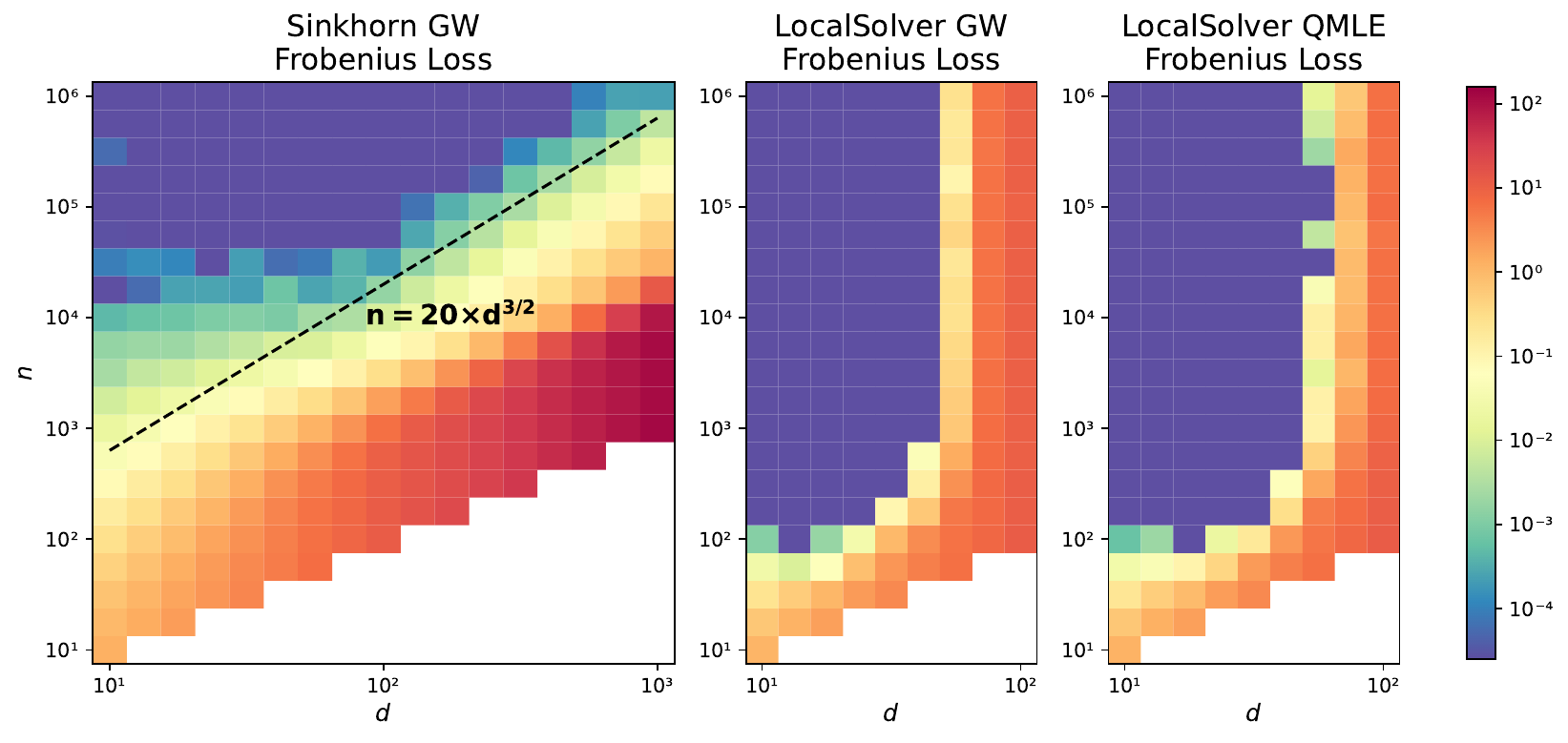}
    \caption{Comparison of Sinkhorn GW, LocalSolver GW, and LocalSolver QMLE for covariance alignment with random Wishart distributed covariances $\Sigma \sim \mathcal{W}_d(I_d, d)$. Here we vary the sample size $m = n \in [10, 10^6]$, and the dimension $d\in [10, 10^3]$ of the covariance matrix. Three heatmaps show the mean squared Frobenius norm of these algorithms as a function of $(d, n)$ averaged over $1,000$ experiments for Sinkhorn GW and $100$ experiments for the more time-consuming LocalSolver estimators. We observe that the LocalSolver combinatorial search often achieves exact recovery for small values of $d$, while only the Sinkhorn GW algorithm scales to high dimensions and attains the minimax optimal scaling $n = \widetilde{\Theta}(d^{3/2})$  for sufficiently large $n$.}
    \label{fig:n_d_scaling}
\end{figure}

We proceed with random covariance matrices $\Sigma$, and compare the Sinkhorn GW estimator with the QMLE and GW estimators computed via LocalSolver. We generate random Wishart distributed covariances $\Sigma \sim \mathcal{W}_d(I_d, d)$ with $d\in [10,10^3]$, choose various sample sizes $m=n\in [10,10^6]$, and set $\varepsilon = 1/d^2$ for the Sinkhorn GW estimator. Figure \ref{fig:n_d_scaling} displays the heatmaps of the mean squared Frobenius losses (averaged over $1,000$ experiments for Sinkhorn GW and $100$ experiments for the more time-consuming LocalSolver estimators) achieved by all estimators for each $(n,d)$ pair. We observe that for small values of $d$, the combinatorial search performed by LocalSolver can often achieve exact recovery (dark purple color) and result in smaller losses than the Sinkhorn GW estimator. However, starting from moderate values of $d$ (say $d=100$), the LocalSolver combinatorial search fails and outputs random results even for very large $n$. In contrast, the Sinkhorn GW estimator still achieves vanishing Frobenius losses for large problems with $d=1,000$, with a running time on the order of several seconds. We also find that for sufficiently large $n$, the minimax optimal scaling $n = \widetilde{\Theta}(d^{3/2})$ (indicated by the black dashed line in Figure \ref{fig:n_d_scaling}) holds for the Sinkhorn GW estimator. 

The above experimental results show that the Sinkhorn GW estimator is competitive for covariance alignment both statistically and computationally. However, we do not have a formal proof that the Sinkhorn GW estimator achieves the minimax rate in Theorems \ref{thm:upper_bound_QMLE}-\ref{thm:lower_bound}. We leave the statistical properties of the Sinkhorn GW estimator, or more generally the construction of a polynomial-time minimax optimal estimator, as outstanding open problems.

\section{Proof of upper bounds}\label{sec:upper_bounds}
In this section we establish the upper bounds for both the QMLE and GW estimators in Theorem \ref{thm:upper_bound_QMLE} and Theorem \ref{thm:upper_bound_GW}, respectively. Our high-level idea is to establish a high-probability lower bound of $R_{m,n}(\widehat{\pi}) - R_{m,n}(\pi^\star)$ for both the QMLE and GW estimators: with probability exceeding $1-\delta$,
\begin{align*}
    R_{m, n}(\widehat{\pi}) - R_{m, n}(\pi^\star) \geq \rho\left(\Fnorm{\Sigma^{\widehat{\pi}} - \Sigma^{\pi^\star}}\right)
\end{align*}
holds for some properly chosen $\rho(\cdot)$. Since $R_{m, n}(\widehat{\pi}) - R_{m, n}(\pi^\star)\le 0$ by definition of $\widehat{\pi}$, solving the above inequality will give the target upper bound of $\Fnorm{\Sigma^{\widehat{\pi}} - \Sigma^{\pi^\star}}$. 

\subsection{Upper bound of the QMLE}\label{sec:QMLE}
We first establish the upper bound for the QMLE and prove Theorem \ref{thm:upper_bound_QMLE}. In Section \ref{subsec:QMLE_good_event}, we define several good events which happen simultaneously with probability at least $1-\delta$. In Section \ref{subsec:QMLE_loss_difference}, we condition on these events and derive a deterministic lower bound of $R_{m, n}^\QMLE(\widehat{\pi}^\QMLE) - R_{m, n}^\QMLE(\pi^\star)$. In Section \ref{subsec:QMLE_L1L2}, we prove a useful trace-Frobenius inequality which relates the above lower bound to the final Frobenius norm.

\subsubsection{Good events}\label{subsec:QMLE_good_event}
Without loss of generality, throughout our proof we assume that $\pi^\star = \id$; by swapping the roles of $X^m$ and $Y^n$ we also assume that $m\ge n$. Denote $\widehat{\pi}^\QMLE$ by $\widehat{\pi}$ for notational simplicity. The good event $\calG$ that we will condition on afterwards is defined to be the intersection of events $\calG = \calE_1 \cap \calE_2 \cap \calE_3$, where
\begin{align}
\calE_1 &:= \bigg\{ \jiao{\hS_Y, (\hS_X^{-1})^{\widehat{\pi}} - \hS_X^{-1}} \ge  \jiao{\Sigma, (\hS_X^{-1})^{\widehat{\pi}} - \hS_X^{-1}} \nonumber \\
& \qquad - c\sqrt{\frac{d\log d+\log(1/\delta)}{n}}\Fnorm{\Sigma^{1/2}((\hS_X^{-1})^{\widehat{\pi}} - \hS_X^{-1})\Sigma^{1/2}} \bigg\}, \label{eq:QMLE_E1} \\
\calE_2 &:= \bigg\{ \jiao{\Sigma^{-1} - \Sigma^{-1}\Sigma^{\widehat{\pi}^{-1}}\Sigma^{-1}, \hS_X} \ge \jiao{\Sigma^{-1} - \Sigma^{-1}\Sigma^{\widehat{\pi}^{-1}}\Sigma^{-1}, \Sigma} \nonumber \\
& \qquad - c\sqrt{\frac{d\log d + \log(1/\delta)}{m}} \Fnorm{\Sigma^{-1/2}\Sigma^{\widehat{\pi}^{-1}}\Sigma^{-1/2} - I_d } \bigg\}, \label{eq:QMLE_E2} \\
\calE_3 &:= \bigg\{ \Opnorm{\Sigma^{-1/2} \hS_X \Sigma^{-1/2} - I} \le \min\bigg\{ c\sqrt{\frac{d+\log(1/\delta)}{m}}, \frac{1}{2} \bigg\}\bigg\}. \label{eq:QMLE_E3}
\end{align}
Here $c>0$ is a numerical constant chosen in the following lemma.

\begin{lemma}\label{lemma:QMLE_good_event}
There exist absolute constants $c,c_0>0$ independent of $(n,m,d,\delta)$ such that, if $m \ge n\ge c_0(d\log d+\log(1/\delta))$, it holds that $\bP(\calG) \ge 1-\delta$. 
\end{lemma}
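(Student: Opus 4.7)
The plan is to control each of $\calE_1,\calE_2,\calE_3$ at probability level at least $1-\delta/3$ and conclude by a union bound. The engine throughout is the Hanson--Wright inequality for Gaussian quadratic forms combined with a union bound over all $d!$ permutations, whose cost $\log|S_d|\le d\log d$ is precisely what the sample size hypothesis is calibrated to absorb.

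The event $\calE_3$ is standard operator-norm concentration of a Wishart matrix: for Gaussian $X_i\sim \calN(0,\Sigma)$ and $m\gtrsim d+\log(1/\delta)$ (implied by $m\ge c_0(d\log d+\log(1/\delta))$), $\Opnorm{\Sigma^{-1/2}\hS_X\Sigma^{-1/2}-I_d}\lesssim \sqrt{(d+\log(1/\delta))/m}\le 1/2$ holds with probability at least $1-\delta/3$, which yields both bounds in the definition of $\calE_3$ for $c_0$ sufficiently large.

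For $\calE_1$, I condition on $\hS_X$ so that $M_\pi:=(\hS_X^{-1})^\pi-\hS_X^{-1}$ is a fixed symmetric matrix for each $\pi\in S_d$. Under this conditioning, the scalar $n\cdot\jiao{\hS_Y-\Sigma,M_\pi}=\sum_{i=1}^n(Y_i^\top M_\pi Y_i-\trace(\Sigma M_\pi))$ is a centered Gaussian quadratic form in $(Y_1,\dots,Y_n)$ with effective matrix $I_n\otimes M_\pi$, so Hanson--Wright gives
\[
\bP\bigl(|\jiao{\hS_Y-\Sigma,M_\pi}|>t\,\bigm|\,\hS_X\bigr)\le 2\exp\!\Bigl(-c_1\min\!\Bigl(\tfrac{nt^2}{\Fnorm{\Sigma^{1/2}M_\pi\Sigma^{1/2}}^2},\tfrac{nt}{\Opnorm{\Sigma^{1/2}M_\pi\Sigma^{1/2}}}\Bigr)\Bigr).
\]
Taking $t_\pi:=c_2\sqrt{(d\log d+\log(1/\delta))/n}\,\Fnorm{\Sigma^{1/2}M_\pi\Sigma^{1/2}}$ and using $\Opnorm{\cdot}\le\Fnorm{\cdot}$ together with $n\ge c_0(d\log d+\log(1/\delta))$, the quadratic branch of the minimum dominates and gives per-$\pi$ conditional failure probability at most $2\exp(-c_1c_2^2(d\log d+\log(1/\delta)))$. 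Union-bounding over $\pi\in S_d$ absorbs the $d\log d$ factor for $c_2$ large, and taking expectation over $\hS_X$ yields $\bP(\calE_1)\ge 1-\delta/3$.

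The event $\calE_2$ is handled symmetrically: with $N_\pi:=\Sigma^{-1}-\Sigma^{-1}\Sigma^{\pi^{-1}}\Sigma^{-1}$ (deterministic given $\Sigma$ and $\pi$), the identity $\Sigma^{1/2}N_\pi\Sigma^{1/2}=I_d-\Sigma^{-1/2}\Sigma^{\pi^{-1}}\Sigma^{-1/2}$ shows the Frobenius norm appearing in the Hanson--Wright bound matches the error scale in $\calE_2$; applying Hanson--Wright to $m\cdot\jiao{\hS_X-\Sigma,N_\pi}=\sum_i(X_i^\top N_\pi X_i-\trace(\Sigma N_\pi))$ under $m\ge c_0(d\log d+\log(1/\delta))$ and union-bounding over $S_d$ gives $\bP(\calE_2)\ge 1-\delta/3$. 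A final union bound over the three events completes the proof. The main obstacle is precisely this uniformity in $\pi$: because $\widehat\pi$ is correlated with both samples, we cannot reduce to a single fixed permutation, and the resulting $\log(d!)\le d\log d$ union-bound tax is exactly what pins down the sample size hypothesis $n\ge c_0(d\log d+\log(1/\delta))$ and manifests as the extra $\log d$ factor in the final rate of Theorem~\ref{thm:upper_bound_QMLE}.
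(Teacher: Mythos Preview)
Your proof is correct and follows essentially the same approach as the paper: Hanson--Wright conditionally on $\hS_X$ for $\calE_1$, Hanson--Wright for $\calE_2$, standard Wishart operator-norm concentration for $\calE_3$, each combined with a union bound over $S_d$ at cost $\log(d!)\le d\log d$. Your write-up is slightly more explicit about why the sub-Gaussian branch of Hanson--Wright dominates under the sample-size assumption, but the argument is the same.
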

\begin{proof}
We show that $\bP(\calE_i)\ge 1-\delta/3$ for all $i=1,2,3$. For $\calE_1$, first consider the scenario where $\widehat{\pi}$ is replaced by any fixed $\pi\in S_d$; we denote the resulting event by $\calE_1(\pi)$. After conditioning on $X^m$, the Hanson--Wright inequality in Lemma \ref{lemma:Hanson-Wright} tells that $\mathbb{P}(\calE_1(\pi))\ge 1-\delta/(3\cdot d!)$ for any fixed $\pi$, for a large numerical constant $c>0$. Since $\cap_{\pi\in S_d} \calE_1(\pi)\subseteq \calE_1$, the union bound yields $\mathbb{P}(\calE_1)\ge 1-\delta/3$. The analysis of $\calE_2$ follows from a similar application of the Hanson--Wright inequality to $\hS_X$. 

Finally, the lower bound of $\mathbb{P}(\calE_3)$ follows from the concentration of the sample covariance matrix, cf. \cite[Theorem 5.7]{rigollet2015high}. The upper bound $1/2$ could be ensured by $m\ge c_0(d\log d +\log(1/\delta))$ and a sufficiently large constant $c_0$. 
\end{proof}

The specific form of the good event $\calG$ is motivated by the calculation of $R_{m, n}^\QMLE(\widehat{\pi}) - R_{m, n}^\QMLE(\id)$ in the following section. 

\subsubsection{Lower bound of the loss difference}\label{subsec:QMLE_loss_difference}
Throughout this section we condition on the good event $\calG$ defined in \eqref{eq:QMLE_E1}-\eqref{eq:QMLE_E3} and prove a deterministic lower bound of $R_{m, n}^\QMLE(\widehat{\pi}) - R_{m, n}^\QMLE(\id)$. Specifically, 
\begin{align*}
     &R_{m, n}^\QMLE(\widehat{\pi}) - R_{m, n}^\QMLE(\id) \\
     &\stepa{=} \langle \hS_Y, (\hS_X^{-1})^{\widehat{\pi}} - \hS_X^{-1}\rangle \\
     &\stepb{\ge} \langle\Sigma, (\hS_X^{-1})^{\widehat{\pi}} - \hS_X^{-1}\rangle - c\sqrt{\frac{d\log d + \log(1/\delta)}{n}}\underbrace{ \Fnorm{\Sigma^{1/2}((\hS_X^{-1})^{\widehat{\pi}} - \hS_X^{-1})\Sigma^{1/2}} }_{=: R_1}.
\end{align*}
Here (a) follows from the definition of the empirical loss in \eqref{eq:QMLE}, and (b) is due to the event $\calE_1$ in \eqref{eq:QMLE_E1}. Deferring the analysis of the remainder term $R_1$ for a moment, we continue to lower bound the main term as
\begin{align*}
&\langle\Sigma, (\hS_X^{-1})^{\widehat{\pi}} - \hS_X^{-1}\rangle = \langle \Sigma^{\widehat{\pi}^{-1}} - \Sigma, \hS_X^{-1}\rangle \\
&\stepc{=} \langle \Sigma^{\widehat{\pi}^{-1}} - \Sigma, \Sigma^{-1}\rangle + \jiao{ \Sigma^{\widehat{\pi}^{-1}} - \Sigma, \Sigma^{-1}(\Sigma - \hS_X)\Sigma^{-1} } + \underbrace{\jiao{ \Sigma^{\widehat{\pi}^{-1}} - \Sigma, \hS_X^{-1} - \Sigma^{-1} - \Sigma^{-1}(\Sigma - \hS_X)\Sigma^{-1} } }_{=: R_2}  \\
&\stepd{\ge} \langle \Sigma^{\widehat{\pi}^{-1}} - \Sigma, \Sigma^{-1}\rangle - c\sqrt{\frac{d\log d + \log(1/\delta)}{m}} \Fnorm{\Sigma^{-1/2}\Sigma^{\widehat{\pi}^{-1}}\Sigma^{-1/2} - I_d } + R_2.
\end{align*}
Here (c) is motivated by the approximation $\hS_X^{-1}\approx \Sigma^{-1} + \Sigma^{-1}(\Sigma - \hS_X)\Sigma^{-1}$ if $\hS_X \approx \Sigma$; this step linearizes the dependence on $\hS_X$. As a result, (d) makes use of the event $\calE_2$ in \eqref{eq:QMLE_E2}. 

The remainder terms $R_1$ and $R_2$ are bounded in the following lemma. 

\begin{lemma}\label{lemma:remainder}
Under the event $\calE_3$ in \eqref{eq:QMLE_E3}, it holds that
\begin{align*}
    R_1 &\le \Fnorm{ \Sigma^{1/2}(\Sigma^{-1})^{\widehat{\pi}}\Sigma^{1/2}  - I_d } + 4c\sqrt{d}\cdot \sqrt{\frac{d+\log(1/\delta)}{m}}, \\
    |R_2| &\le 2c^2\cdot \frac{\sqrt{d}(d+\log(1/\delta))}{m}\Fnorm{\Sigma^{-1/2}\Sigma^{\widehat{\pi}^{-1} }\Sigma^{-1/2} - I_d}. 
\end{align*}
\end{lemma}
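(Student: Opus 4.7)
The natural change of variables is $M := \Sigma^{-1/2}\widehat{\Sigma}_X\Sigma^{-1/2}$, under which $\widehat{\Sigma}_X^{-1} - \Sigma^{-1} = \Sigma^{-1/2}(M^{-1}-I_d)\Sigma^{-1/2}$. The event $\calE_3$ gives $\Opnorm{M-I_d}\le c\sqrt{(d+\log(1/\delta))/m}$ and $\Opnorm{M-I_d}\le 1/2$; a one-term Neumann expansion then yields $\Opnorm{M^{-1}}\le 2$ and $\Opnorm{M^{-1}-I_d}\le 2\Opnorm{M-I_d}$.

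\textbf{Plan for $R_1$.} The strategy is to telescope and apply the triangle inequality. Write
\begin{align*}
(\widehat{\Sigma}_X^{-1})^{\widehat{\pi}} - \widehat{\Sigma}_X^{-1}
= \big[(\widehat{\Sigma}_X^{-1})^{\widehat{\pi}} - (\Sigma^{-1})^{\widehat{\pi}}\big] + \big[(\Sigma^{-1})^{\widehat{\pi}} - \Sigma^{-1}\big] + \big[\Sigma^{-1} - \widehat{\Sigma}_X^{-1}\big],
\end{align*}
sandwich by $\Sigma^{1/2}$, and apply the triangle inequality to get $R_1\le T_1+T_2+T_3$. The middle piece is exactly the target leading term $T_2=\Fnorm{\Sigma^{1/2}(\Sigma^{-1})^{\widehat{\pi}}\Sigma^{1/2}-I_d}$, and $T_3=\Fnorm{I_d-M^{-1}}\le\sqrt{d}\,\Opnorm{M^{-1}-I_d}\le 2c\sqrt{d(d+\log(1/\delta))/m}$. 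For $T_1$, set $B:=\Sigma^{1/2}P_{\widehat{\pi}}\Sigma^{-1/2}$, so that $T_1=\Fnorm{B(M^{-1}-I_d)B^\top}$ and $BB^\top=\Sigma^{1/2}(\Sigma^{-1})^{\widehat{\pi}}\Sigma^{1/2}$. The key technical tool is the matrix inequality
\begin{align*}
\Fnorm{B X B^\top}\;\le\;\Opnorm{X}\,\Fnorm{BB^\top},
\end{align*}
which I would prove by diagonalizing $B^\top B=U\Lambda U^\top$: writing $\tilde X=U^\top X U$, the quantity $\Fnorm{BXB^\top}^2$ reduces to $\sum_{i,j}\lambda_i\lambda_j\tilde X_{ij}^2$, and AM--GM ($\lambda_i\lambda_j\le(\lambda_i^2+\lambda_j^2)/2$) combined with $\max_i\sum_j\tilde X_{ij}^2\le\Opnorm{X}^2$ gives the bound. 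Applying it yields $T_1\le \Opnorm{M^{-1}-I_d}(T_2+\sqrt{d})$, and since $\Opnorm{M^{-1}-I_d}$ is an $o(1)$ factor under the stated sample-size condition, recombining the three pieces gives $R_1\le T_2+4c\sqrt{d(d+\log(1/\delta))/m}$ (absorbing the multiplicative $(1+o(1))$ on $T_2$ into the leading term).

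\textbf{Plan for $R_2$.} The inner integrand here is the second-order Taylor remainder of matrix inversion. Substituting $\widehat{\Sigma}_X=\Sigma^{1/2}M\Sigma^{1/2}$ and summing the Neumann series $M^{-1}=\sum_{k\ge 0}(-1)^k(M-I_d)^k$ about $I_d$, the identity $M^{-1}+M-2I_d=(M-I_d)^2 M^{-1}$ gives the closed form
\begin{align*}
\widehat{\Sigma}_X^{-1} - \Sigma^{-1} - \Sigma^{-1}(\Sigma - \widehat{\Sigma}_X)\Sigma^{-1} = \Sigma^{-1/2}(M-I_d)^2 M^{-1}\Sigma^{-1/2}.
\end{align*}
Cyclicity of the trace then rewrites
\begin{align*}
R_2 \;=\; \jiao{\Sigma^{-1/2}\Sigma^{\widehat{\pi}^{-1}}\Sigma^{-1/2}-I_d,\,(M-I_d)^2 M^{-1}},
\end{align*}
so Cauchy--Schwarz in the Frobenius inner product together with $\Fnorm{(M-I_d)^2 M^{-1}}\le\Fnorm{(M-I_d)^2}\Opnorm{M^{-1}}\le 2\sqrt{d}\,\Opnorm{M-I_d}^2\le 2c^2\sqrt{d}(d+\log(1/\delta))/m$ delivers the stated bound immediately.

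\textbf{Main obstacle.} The only nonroutine step is the $R_1$ bound: a naive application of sub-multiplicativity to $\Fnorm{B(M^{-1}-I_d)B^\top}$ produces a factor of $\Opnorm{\Sigma^{-1}}$, which the hypothesis $\Opnorm{\Sigma}\le 1$ does not control. Replacing that factor with $\Fnorm{BB^\top}$ (which ties cleanly back into the target leading term $T_2$) requires the inequality $\Fnorm{BXB^\top}\le\Opnorm{X}\Fnorm{BB^\top}$, whose verification via the eigenbasis of $B^\top B$ is the main technical input.
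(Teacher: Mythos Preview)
Your treatment of $R_2$ matches the paper's essentially verbatim: both rewrite the second-order remainder as $\Sigma^{-1/2}(M^{-1}+M-2I_d)\Sigma^{-1/2}$ (your closed form $(M-I_d)^2M^{-1}$ is the same thing), bound it via the eigenvalue control from $\calE_3$, and finish with Cauchy--Schwarz.

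For $R_1$, your three-term telescope and the paper's nested two-term splits in fact produce the same three pieces: $T_2$, $\Fnorm{M^{-1}-I_d}$, and the cross term $\Fnorm{B(M^{-1}-I_d)B^\top}$ with $B=\Sigma^{1/2}P_{\widehat{\pi}}\Sigma^{-1/2}$. The only substantive difference is how that cross term is handled. The paper uses $\Fnorm{BXB^\top}\le\Opnorm{B}^2\Fnorm{X}$ and asserts $\Opnorm{B}=1$ ``by similarity to $P_{\widehat{\pi}}$''; you instead prove and apply $\Fnorm{BXB^\top}\le\Opnorm{X}\Fnorm{BB^\top}$. Your inequality and its diagonalization/AM--GM proof are correct, and your route is arguably more robust: similarity preserves eigenvalues but not singular values, so the paper's operator-norm claim fails in general (e.g.\ $\Sigma=\mathrm{diag}(4,1)$ and $P$ the transposition give $\Opnorm{B}=2$). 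Your lemma sidesteps that issue entirely by tying the bound back to $\Fnorm{BB^\top}=\Fnorm{\Sigma^{1/2}(\Sigma^{-1})^{\widehat{\pi}}\Sigma^{1/2}}\le T_2+\sqrt{d}$.

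There is, however, a small gap in your final assembly. Your bound is honestly
\[
R_1 \;\le\; \bigl(1+\Opnorm{M^{-1}-I_d}\bigr)\,T_2 \;+\; 4c\sqrt{d(d+\log(1/\delta))/m},
\]
and the phrase ``absorbing the multiplicative $(1+o(1))$ on $T_2$ into the leading term'' is not a valid step: $T_2=\Fnorm{\Sigma^{1/2}(\Sigma^{-1})^{\widehat{\pi}}\Sigma^{1/2}-I_d}$ is precisely the quantity the lemma leaves uncontrolled, and you have no a~priori bound that would trade $\epsilon\cdot T_2$ for an additive term. Under $\calE_3$ the multiplicative factor is at most $2$, so what your argument actually delivers is $R_1\le 2T_2 + 4c\sqrt{d(d+\log(1/\delta))/m}$. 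This is entirely adequate for the downstream inequality \eqref{eq:QMLE_loss_difference} (only the absolute constant $C$ changes), but it does not establish the lemma exactly as stated.
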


Plugging Lemma \ref{lemma:remainder} into the previous lower bound yields
\begin{equation}\label{eq:QMLE_loss_difference}
\begin{aligned}
    & R_{m, n}^\QMLE(\widehat{\pi}) - R_{m, n}^\QMLE(\id) \\
    &\ge \langle \Sigma^{\widehat{\pi}^{-1}} - \Sigma, \Sigma^{-1}\rangle - C\sqrt{\frac{d(d\log d + \log(1/\delta))(d + \log(1/\delta))}{mn}} \\
    &\quad - C\sqrt{\frac{d\log d + \log(1/\delta)}{n}}\Fnorm{ \Sigma^{1/2}(\Sigma^{-1})^{\widehat{\pi}}\Sigma^{1/2}  - I_d } \\
    &\quad - C\left(\sqrt{\frac{d\log d + \log(1/\delta)}{m}} + \frac{\sqrt{d}(d+\log(1/\delta))}{m}\right)\Fnorm{\Sigma^{-1/2}\Sigma^{\widehat{\pi}^{-1} }\Sigma^{-1/2} - I_d},
\end{aligned}
\end{equation}
for some absolute constant $C>0$ independent of $(n,m,d)$.

\subsubsection{A trace-Frobenius inequality}\label{subsec:QMLE_L1L2}

Based on \eqref{eq:QMLE_loss_difference}, the remaining question is to relate the Frobenius norms in \eqref{eq:QMLE_loss_difference} to the inner product $\langle\Sigma^{\widehat{\pi}^{-1}} - \Sigma, \Sigma^{-1}\rangle$. Roughly speaking, for the matrix $A = \Sigma^{-1/2}\Sigma^{\widehat{\pi}^{-1} }\Sigma^{-1/2} - I_d$, the Frobenius norm $\Fnorm{A}$ corresponds to the $L_2$ norm of its eigenvalues, and the inner product $\langle\Sigma^{\widehat{\pi}^{-1}} - \Sigma, \Sigma^{-1}\rangle = \trace(A)$ corresponds to the sum of its eigenvalues. Although $\Fnorm{A}\le \trace(A)$ holds for PSD matrices $A\succeq 0$, this inequality may break down if some eigenvalues of $A$ are negative. 

To this end, we present a useful technical inequality in the following lemma. 
\begin{lemma}\label{lemma:L1_L2}
Let $x_1,\cdots,x_d > 0$ with $\prod_{i=1}^d x_i = 1$. Then
\begin{align*}
\sum_{i=1}^d (x_i-1)^2 \le 4\left(\sum_{i=1}^d (x_i-1) + \left(\sum_{i=1}^d (x_i-1)\right)^2 \right). 
\end{align*}
\end{lemma}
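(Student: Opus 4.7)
}

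The plan is to substitute $y_i = x_i - 1 \in (-1, \infty)$, so that the constraint $\prod x_i = 1$ becomes $\sum \log(1 + y_i) = 0$, and denote $S = \sum y_i$ and $Q^2 = \sum y_i^2$. The goal reduces to showing $Q^2 \le 4S + 4S^2$. Note that $\log(1+y) \le y$ immediately yields $S \ge 0$, so the inequality is nontrivial only when $S$ is small.

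The key ingredient is the pointwise bound
\begin{align*}
    y - \log(1+y) \ge \frac{y^2}{2(1 + y_+)}, \qquad y > -1,
\end{align*}
where $y_+ := \max(0, y)$. For $y \le 0$, using the Taylor series $-\log(1+y) = \sum_{k \ge 1} |y|^k/k$ gives $-\log(1+y) \ge |y| + y^2/2$, i.e., $y - \log(1+y) \ge y^2/2$. For $y \ge 0$, a direct calculation shows that $g(y) := y - \log(1+y) - y^2/(2(1+y))$ has derivative $g'(y) = y^2/(2(1+y)^2) \ge 0$ and $g(0) = 0$, so $g(y) \ge 0$. Summing this bound over $i$ and invoking the constraint $\sum \log(1+y_i) = 0$ gives
\begin{align*}
    S \;=\; \sum_{i=1}^d \bigl(y_i - \log(1+y_i)\bigr) \;\ge\; \sum_{i=1}^d \frac{y_i^2}{2(1 + (y_i)_+)}.
\end{align*}

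Let $M := \max_i (y_i)_+ \ge 0$. Since $a_i := 1 + (y_i)_+ \le 1 + M$ and $y_i^2/a_i \ge 0$,
\begin{align*}
    Q^2 \;=\; \sum_{i=1}^d a_i \cdot \frac{y_i^2}{a_i} \;\le\; (1 + M)\sum_{i=1}^d \frac{y_i^2}{1 + (y_i)_+} \;\le\; 2S(1 + M).
\end{align*}
On the other hand $M \le \sqrt{\sum_i (y_i)_+^2} \le Q$, so combining gives the scalar quadratic inequality $Q^2 \le 2S + 2SQ$, equivalently $Q^2 - 2SQ - 2S \le 0$, whence $Q \le S + \sqrt{S^2 + 2S}$.

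The final step is routine algebra. Squaring and using $\sqrt{S^2 + 2S} \le S + 1$ yields
\begin{align*}
    Q^2 \;\le\; \bigl(S + \sqrt{S^2 + 2S}\bigr)^2 \;=\; 2S^2 + 2S + 2S\sqrt{S^2 + 2S} \;\le\; 2S^2 + 2S + 2S(S+1) \;=\; 4S + 4S^2,
\end{align*}
which is the claim. The only nontrivial obstacle was finding the right pointwise surrogate for $y^2$: a bound with denominator $1+y$ instead of $1 + y_+$ fails for $y$ near $-1$, while a constant denominator fails for $y$ large; the piecewise choice above is precisely sharp enough so that the downstream step $M \le Q$ closes the loop.
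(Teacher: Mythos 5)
Your proof is correct; every step checks out. The pointwise bound $y-\log(1+y)\ge y^2/(2(1+y_+))$ is verified correctly in both regimes (the Taylor series for $y\le 0$ and the derivative computation $g'(y)=y^2/(2(1+y)^2)$ for $y\ge 0$), the summation uses the constraint correctly, $M\le Q$ is valid since $M^2\le\sum_i (y_i)_+^2\le Q^2$, and the quadratic $Q^2-2SQ-2S\le 0$ together with $\sqrt{S^2+2S}\le S+1$ delivers exactly $4S+4S^2$.

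The paper's proof shares your central ingredient --- a quadratic strengthening of $\log(1+x)\le x$ of the form $\log(1+x)\le x-\frac{x^2}{2(r+1)}$ valid for $x\le r$ --- but uniformizes the denominator differently. The paper first derives an a priori bound $\max_i x_i\le 2(S+1)$ directly from the product constraint: by AM--GM, $S+d\ge x_i+(d-1)x_i^{-1/(d-1)}$, which after the estimate $x_i^{-1/(d-1)}\ge 1-\log(x_i)/(d-1)$ gives $x_i-\log x_i\le S+1$ and hence $x_i\le 2(S+1)$ by monotonicity of $x\mapsto x-\log x$. It then applies the log bound with the single value $r=2S+1$ and reads off $\sum(x_i-1)^2\le 4S(S+1)$ in one line. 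You instead keep the pointwise denominators $1+(y_i)_+$, trade the maximum for $Q$ itself via $M\le Q$, and close a self-referential quadratic inequality in $Q$. Both routes land on the identical bound $4S(S+1)$; the paper's is slightly shorter once the max bound is in hand, while yours avoids having to bound $\max_i x_i$ in terms of $S$ at all, at the cost of the extra (but routine) quadratic-solving step. Either is a complete proof.
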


The usefulness of Lemma \ref{lemma:L1_L2} lies in the following application: let $x_1,\cdots,x_d$ be all eigenvalues of $\Sigma^{-1/2}\Sigma^{\widehat{\pi}^{-1}}\Sigma^{-1/2}$, then $x_1,\cdots,x_d>0$, and $\prod_{i=1}^d x_i = \det(\Sigma^{\widehat{\pi}^{-1}})\det(\Sigma^{-1})=1$. Therefore, we arrive at the following corollary, which we term as a \emph{trace-Frobenius inequality}. 

\begin{corollary}\label{cor:trace_frob}
For positive definite $\Sigma$, the following inequalities hold: 
\begin{align*}
    \Fnorm{\Sigma^{-1/2}\Sigma^{\widehat{\pi}^{-1} }\Sigma^{-1/2} - I_d}^2 &\le 4(\langle\Sigma^{\widehat{\pi}^{-1}} - \Sigma, \Sigma^{-1}\rangle + \langle\Sigma^{\widehat{\pi}^{-1}} - \Sigma, \Sigma^{-1}\rangle^2), \\
    \Fnorm{\Sigma^{1/2}(\Sigma^{-1})^{\widehat{\pi}}\Sigma^{1/2} - I_d}^2 &\le 4(\langle\Sigma^{\widehat{\pi}^{-1}} - \Sigma, \Sigma^{-1}\rangle + \langle\Sigma^{\widehat{\pi}^{-1}} - \Sigma, \Sigma^{-1}\rangle^2). 
\end{align*}
\end{corollary}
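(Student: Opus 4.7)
The plan is to apply Lemma \ref{lemma:L1_L2} to the eigenvalues of two carefully chosen matrices, one for each inequality. The key observation that enables this is that conjugation by a permutation matrix preserves the determinant, so appropriate products of $\Sigma$, $\Sigma^{\widehat\pi^{-1}}$ and their inverses have determinant one.

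For the first inequality, set $A := \Sigma^{-1/2} \Sigma^{\widehat\pi^{-1}} \Sigma^{-1/2}$. Since $\Sigma^{\widehat\pi^{-1}} = P_{\widehat\pi}^\top \Sigma P_{\widehat\pi} \succ 0$, the matrix $A$ is positive definite and its eigenvalues $x_1,\dots,x_d$ are strictly positive. Moreover, $\det(A) = \det(\Sigma)^{-1}\det(\Sigma^{\widehat\pi^{-1}}) = 1$ since a permutation conjugation leaves the determinant unchanged. By the cyclic property of trace,
\begin{align*}
\sum_{i=1}^d (x_i - 1) \;=\; \tr(A) - d \;=\; \tr\!\bigl(\Sigma^{-1}\Sigma^{\widehat\pi^{-1}}\bigr) - \tr\!\bigl(\Sigma^{-1}\Sigma\bigr) \;=\; \langle \Sigma^{\widehat\pi^{-1}} - \Sigma,\, \Sigma^{-1}\rangle,
\end{align*}
while $\sum_{i=1}^d (x_i-1)^2 = \Fnorm{A - I_d}^2$. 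Plugging into Lemma \ref{lemma:L1_L2} gives the first inequality.

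For the second inequality, take $B := \Sigma^{1/2}(\Sigma^{-1})^{\widehat\pi}\Sigma^{1/2}$. The same reasoning shows $B$ is positive definite with $\det(B) = 1$, so its eigenvalues are positive with product one. Its trace is
\begin{align*}
\tr(B) \;=\; \langle \Sigma,\, (\Sigma^{-1})^{\widehat\pi}\rangle \;=\; \tr\!\bigl(\Sigma \, P_{\widehat\pi}\Sigma^{-1}P_{\widehat\pi}^\top\bigr) \;=\; \tr\!\bigl(P_{\widehat\pi}^\top \Sigma P_{\widehat\pi}\,\Sigma^{-1}\bigr) \;=\; \langle \Sigma^{\widehat\pi^{-1}},\, \Sigma^{-1}\rangle,
\end{align*}
so that $\tr(B - I_d) = \langle \Sigma^{\widehat\pi^{-1}} - \Sigma, \Sigma^{-1}\rangle$, which is exactly the quantity appearing on the right-hand side. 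Applying Lemma \ref{lemma:L1_L2} to the eigenvalues of $B$ then yields the second inequality.

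I do not anticipate any real obstacle here: the corollary is a direct consequence of Lemma \ref{lemma:L1_L2} once one recognizes that both $\Sigma^{-1/2}\Sigma^{\widehat\pi^{-1}}\Sigma^{-1/2}$ and $\Sigma^{1/2}(\Sigma^{-1})^{\widehat\pi}\Sigma^{1/2}$ are positive definite matrices with determinant one and whose trace minus $d$ equals the common scalar $\langle \Sigma^{\widehat\pi^{-1}} - \Sigma, \Sigma^{-1}\rangle$. The only mildly subtle step is the identity $\langle \Sigma,(\Sigma^{-1})^{\widehat\pi}\rangle = \langle \Sigma^{\widehat\pi^{-1}},\Sigma^{-1}\rangle$ used in the second case, which follows from the cyclicity of the trace together with $P_{\widehat\pi}^{-1} = P_{\widehat\pi}^\top = P_{\widehat\pi^{-1}}$.
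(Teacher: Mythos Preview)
Your proof is correct and follows exactly the same approach as the paper: apply Lemma~\ref{lemma:L1_L2} to the eigenvalues of $\Sigma^{-1/2}\Sigma^{\widehat\pi^{-1}}\Sigma^{-1/2}$ (respectively $\Sigma^{1/2}(\Sigma^{-1})^{\widehat\pi}\Sigma^{1/2}$), using that both matrices are positive definite with determinant one. The paper only spells out the first inequality and leaves the second implicit; your verification of the trace identity $\langle\Sigma,(\Sigma^{-1})^{\widehat\pi}\rangle = \langle\Sigma^{\widehat\pi^{-1}},\Sigma^{-1}\rangle$ via cyclicity and $P_{\widehat\pi}^\top = P_{\widehat\pi^{-1}}$ fills in that detail.
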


By \eqref{eq:QMLE_loss_difference} and Corollary \ref{cor:trace_frob}, we conclude that
\begin{align}\label{eq:QMLE_difference_final}
&R_{m, n}^\QMLE(\widehat{\pi}) - R_{m, n}^\QMLE(\id) \ge (1-u)\langle\Sigma^{\widehat{\pi}^{-1}} - \Sigma, \Sigma^{-1}\rangle  - u\sqrt{\langle\Sigma^{\widehat{\pi}^{-1}} - \Sigma, \Sigma^{-1}\rangle } - v
\end{align}
where
\begin{align*}
u &= 2C\left(\sqrt{\frac{d\log d + \log(1/\delta)}{n}} + \sqrt{\frac{d\log d + \log(1/\delta)}{m}} + \frac{\sqrt{d}(d+\log(1/\delta))}{m}\right), \\
v & =C\sqrt{\frac{d(d\log d + \log(1/\delta))(d + \log(1/\delta))}{mn}}. 
\end{align*}
By our assumption $m\ge n\ge c_0(d\log d+\log(1/\delta))$ and $mn\ge c_0d(d\log d + \log(1/\delta))(d + \log(1/\delta))$, choosing a large enough numerical constant $c_0>0$ ensures that $u,v\le 1/2$. Moreover, by definition of $\widehat{\pi} = \widehat{\pi}^\QMLE$ we have $R_{m, n}^\QMLE(\widehat{\pi}) - R_{m, n}^\QMLE(\id) \le 0$. Hence, by \eqref{eq:QMLE_difference_final} and Lemma \ref{lemma:quad_bound}, we conclude that
\begin{align}\label{eq:QMLE_trace_upperbound}
\jiao{\Sigma^{\widehat{\pi}^{-1}} - \Sigma, \Sigma^{-1}} &\le \left( \frac{u}{1-u} + \sqrt{\frac{v}{1-u}}\right)^2 \le 4(u+\sqrt{v})^2 \le 8(u^2 + v).
\end{align}

From \eqref{eq:QMLE_trace_upperbound} we are ready to prove Theorem \ref{thm:upper_bound_QMLE}: conditioned on $\calG$, 
\begin{align*}
\Fnorm{\Sigma^{\widehat{\pi}} - \Sigma}^2 & = \Fnorm{\Sigma^{\widehat{\pi}^{-1}} - \Sigma}^2 \stepa{\le} \Fnorm{\Sigma^{-1/2}\Sigma^{\widehat{\pi}^{-1} }\Sigma^{-1/2} - I_d}^2 \\
&\stepb{\le} 4(\langle\Sigma^{\widehat{\pi}^{-1}} - \Sigma, \Sigma^{-1}\rangle + \langle\Sigma^{\widehat{\pi}^{-1}} - \Sigma, \Sigma^{-1}\rangle^2) \\
&\stepc{\le} 288(u^2+v) \\
&\stepd{=} O\left(\frac{d\log d + \log(1/\delta)}{n} + \sqrt{\frac{d(d\log d + \log(1/\delta))(d + \log(1/\delta))}{mn}} \right), 
\end{align*}
where (a) is due to the assumption $\Opnorm{\Sigma}\le 1$, (b) uses Corollary \ref{cor:trace_frob}, (c) follows from \eqref{eq:QMLE_trace_upperbound} and that $u^2+v<1$, and (d) plugs in the definition of $(u,v)$ and uses $m\ge \sqrt{mn}\ge \sqrt{c_0d}(d+\log(1/\delta))$. Since $\mathbb{P}(\calG)\ge 1-\delta$ by Lemma \ref{lemma:QMLE_good_event}, this proves the upper bound of Theorem \ref{thm:upper_bound_QMLE}.

\subsection{Upper bound of the GW estimator}
Now we establish the upper bound for the GW estimator and prove Theorem~\ref{thm:upper_bound_GW} using similar arguments as in our proof for the QMLE. Again, in Section~\ref{subsec:GW_good_event} we define the good events, and in Section~\ref{subsec:GW_loss_difference} we condition on the good events and derive a deterministic lower bound of the loss difference $R_{m, n}^\GW(\widehat{\pi}^\GW) - R_{m, n}^\GW(\pi^\star)$. 

\subsubsection{Good events}\label{subsec:GW_good_event}
As before, without loss of generality we assume that $\pi^* = \id$ and refer to $\widehat{\pi}^\GW$ as $\widehat{\pi}$ for notational convenience. We condition on a good event $\cG = \calE_1 \cap \calE_2 \cap \calE_3$ defined as the following intersection: 
\begin{align}
\calE_1 &:= \bigg\{ \jiao{\hS_Y, \hS_X - (\hS_X)^{\widehat{\pi}}} \ge  \jiao{\Sigma, \hS_X - (\hS_X)^{\widehat{\pi}}} \nonumber \\
& \qquad - c\sqrt{\frac{d\log d+\log(1/\delta)}{n}}\Fnorm{\Sigma^{1/2}(\hS_X - (\hS_X)^{\widehat{\pi}})\Sigma^{1/2}} \bigg\}, \label{eq:GW_E1} \\
\calE_2 &:= \bigg\{ \jiao{\Sigma, \hS_X - (\hS_X)^{\widehat{\pi}}} \ge  \jiao{\Sigma, \Sigma - \Sigma^{\widehat{\pi}}} \nonumber \\
& \qquad - c\sqrt{\frac{d\log d+\log(1/\delta)}{m}}\Fnorm{\Sigma^{1/2}(\Sigma - \Sigma^{\widehat{\pi}^{-1} })\Sigma^{1/2}} \bigg\}, \label{eq:GW_E2} \\
\calE_3 &:= \bigg\{ \Fnorm{\hS_X - (\hS_X)^{\widehat{\pi}}} \le \Fnorm{\Sigma - \Sigma^{\widehat{\pi}}} + c\sqrt{\frac{d(d+\log(1/\delta))}{m}}\bigg\}. \label{eq:GW_E3}
\end{align}
Here $c>0$ is a numerical constant chosen in the following lemma.

\begin{lemma}\label{lemma:GW_good_event}
Let $m \wedge n\ge d\log d+\log(1/\delta)$. There exists absolute constant $c > 0$ independent of $(n,m,d,\delta)$ such that $\bP(\calG) \ge 1-\delta$. 
\end{lemma}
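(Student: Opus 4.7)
The plan is to bound each of $\calE_1,\calE_2,\calE_3$ individually with failure probability at most $\delta/3$ and then take a union bound, closely mirroring the proof of \Cref{lemma:QMLE_good_event}. The main tools are the Hanson--Wright inequality (Lemma \ref{lemma:Hanson-Wright}) combined with a union bound over all $d!$ permutations; the resulting factor $\log|S_d|\lesssim d\log d$ is exactly what the sample-size assumption $m\wedge n\ge d\log d+\log(1/\delta)$ is designed to absorb.

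For $\calE_1$, I would first freeze a deterministic $\pi\in S_d$, define $\calE_1(\pi)$ as the corresponding event with $\widehat{\pi}$ replaced by $\pi$, and condition on $X^m$ so that $M_\pi := \hS_X - (\hS_X)^\pi$ becomes deterministic. Writing $\jiao{\hS_Y,M_\pi} = \frac{1}{n}\sum_{i=1}^n Y_i^\top M_\pi Y_i$ with $Y_i\iiddistr \calN(0,\Sigma)$ (using $\pi^\star=\id$), this is a centered quadratic form whose Hanson--Wright sub-exponential scale is $\Fnorm{\Sigma^{1/2}M_\pi\Sigma^{1/2}}/\sqrt{n}$. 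Choosing a sufficiently large absolute constant $c$ in the definition of $\calE_1$ gives $\bP(\calE_1(\pi))\ge 1-\delta/(3\cdot d!)$. Since $\bigcap_{\pi\in S_d}\calE_1(\pi)\subseteq \calE_1$, a union bound over $S_d$ produces $\bP(\calE_1)\ge 1-\delta/3$.

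For $\calE_2$, the argument is structurally identical but with the randomness carried by $X^m$ instead of $Y^n$. Using the identity $\jiao{\Sigma,(\hS_X)^\pi} = \jiao{\Sigma^{\pi^{-1}},\hS_X}$ together with the cyclic property of trace, $\jiao{\Sigma,\hS_X-(\hS_X)^\pi}$ is a quadratic form in $X_1,\ldots,X_m\iiddistr \calN(0,\Sigma)$ with mean $\jiao{\Sigma,\Sigma-\Sigma^{\pi}}$. Applying Hanson--Wright to each fixed $\pi$ with deviation scale $\Fnorm{\Sigma^{1/2}(\Sigma-\Sigma^{\pi^{-1}})\Sigma^{1/2}}/\sqrt{m}$ and union-bounding over $S_d$ yields $\bP(\calE_2)\ge 1-\delta/3$.

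For $\calE_3$, no union bound over $S_d$ is needed. By the triangle inequality and the permutation-invariance of the Frobenius norm,
\[
\Fnorm{\hS_X - (\hS_X)^{\widehat{\pi}}} \le \Fnorm{\Sigma - \Sigma^{\widehat{\pi}}} + 2\Fnorm{\hS_X - \Sigma},
\]
so it suffices to show $\Fnorm{\hS_X - \Sigma}\lesssim \sqrt{d(d+\log(1/\delta))/m}$ with probability at least $1-\delta/3$. Under $\Opnorm{\Sigma}\le 1$ this is a standard high-probability Wishart Frobenius deviation bound, obtainable by a single application of Hanson--Wright with $M=I_d$ or directly from the tail bounds in \cite{rigollet2015high}. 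The main obstacle in the whole argument is conceptual rather than technical: because $\widehat{\pi}$ is random, one cannot apply Hanson--Wright at the realized permutation and must instead absorb a $\log(d!)\lesssim d\log d$ entropy term via the union bound, which is precisely the source of the $d\log d$ factor in the hypothesis.
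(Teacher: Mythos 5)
Your proposal is correct and follows essentially the same route as the paper: Hanson--Wright plus a union bound over all $d!$ permutations for $\calE_1$ and $\calE_2$, and for $\calE_3$ a double triangle inequality with permutation invariance reducing to $\Fnorm{\hS_X-\Sigma}\lesssim\sqrt{d(d+\log(1/\delta))/m}$, which the paper obtains via $\Fnorm{\hS_X-\Sigma}\le\sqrt{d}\,\Opnorm{\Sigma^{-1/2}\hS_X\Sigma^{-1/2}-I_d}$ and the operator-norm concentration already used for the QMLE event $\calE_3$. The only small imprecision is your aside that this Frobenius bound follows from ``a single application of Hanson--Wright with $M=I_d$'' (that would only control $\trace(\hS_X)$); your alternative citation to the standard covariance tail bound is the correct and intended route.
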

\begin{proof}
The high probabilities of events $\calE_1, \calE_2$ follows from similar applications of the Hanson-Wright inequality as in the proof of Lemma \ref{lemma:QMLE_good_event}. For $\calE_3$, applying the triangle inequality twice yields
    \begin{align*}
        \Fnorm{\hS_X - (\hS_X)^{\widehat{\pi}}} - \Fnorm{\Sigma - \Sigma^{\widehat{\pi}}} \leq \Fnorm{(\hS_X - \Sigma)^{\widehat{\pi}} - (\hS_X - \Sigma)} \leq 2\Fnorm{\hS_X - \Sigma},
    \end{align*}
    where in the last step we have used the permutation invariance of the Frobenius norm. Since
    \begin{align*}
    \Fnorm{\hS_X - \Sigma} \le \Opnorm{\Sigma}\Fnorm{\Sigma^{-1/2}\hS_X\Sigma^{-1/2} - I_d}\le \sqrt{d}\Opnorm{\Sigma^{-1/2}\hS_X\Sigma^{-1/2} - I_d}, 
    \end{align*}
    the rest of the proof follows from \eqref{eq:QMLE_E3} and Lemma \ref{lemma:QMLE_good_event}. 
\end{proof}

\subsubsection{Lower bound of the loss difference}\label{subsec:GW_loss_difference}
Again, we condition on the good event $\calG$ defined in \eqref{eq:GW_E1}-\eqref{eq:GW_E3} and prove a deterministic lower bound of $R_{m, n}^\GW(\widehat{\pi}) - R_{m, n}^\GW(\id)$. We begin by writing
\begin{align*}
    R_{m, n}^\GW(\widehat{\pi}) - R_{m, n}^\GW(\id) &\stepa{=} 2\jiao{\hS_Y, \hS_X - (\hS_X)^{\widehat{\pi}}}\\
    &\stepb{\ge} 2\jiao{\Sigma, \hS_X - (\hS_X)^{\widehat{\pi}}} - 2c\sqrt{\frac{d\log d+\log(1/\delta)}{n}}\underbrace{ \Fnorm{\Sigma^{1/2}(\hS_X - (\hS_X)^{\widehat{\pi}})\Sigma^{1/2}} }_{=: R_1} \\
    &\stepc{\ge} 2\jiao{\Sigma, \Sigma - \Sigma^{\widehat{\pi}}} - 2c\sqrt{\frac{d\log d+\log(1/\delta)}{n}}\left(R_1+\underbrace{ \Fnorm{\Sigma^{1/2}(\Sigma - \Sigma^{\widehat{\pi}^{-1} })\Sigma^{1/2}}}_{=:R_2 }\right) \\
    &\stepd{=} \Fnorm{\Sigma^{\widehat{\pi}} - \Sigma}^2 - 2c\sqrt{\frac{d\log d+\log(1/\delta)}{n}}(R_1 + R_2). 
\end{align*}
Here (a) follows from the definition of the empirical loss in~\eqref{eq:GW_estimator}, (b) is due to the event $\calE_1$ in~\eqref{eq:GW_E1}, (c) is due to the event $\calE_2$ in \eqref{eq:GW_E2} and $m\ge n$, and (d) uses the identity
\begin{align*}
    2\jiao{\Sigma, \Sigma - \Sigma^{\widehat{\pi}}} = \Fnorm{\Sigma}^2 + \Fnorm{\Sigma^{\widehat{\pi}}}^2 - 2\jiao{\Sigma, \Sigma^{\widehat{\pi}}} = \Fnorm{\Sigma^{\widehat{\pi}} - \Sigma}^2. 
\end{align*}

Next we upper bound the remainder terms. 
\begin{itemize}
    \item The remainder term $R_1$: by the event $\calE_3$ in \eqref{eq:GW_E3} and $\Opnorm{\Sigma}\le 1$, it directly follows that
    \begin{align*}
        R_1 \le \Opnorm{\Sigma}\Fnorm{\hS_X - (\hS_X)^{\widehat{\pi}}} \le \Fnorm{\Sigma - \Sigma^{\widehat{\pi}}} + c\sqrt{\frac{d(d+\log(1/\delta))}{m}}. 
    \end{align*}
    \item The remainder term $R_2$: by the permutation invariance of the Frobenius norm, and by $\Opnorm{\Sigma}\le 1$ again, we have
    \begin{align*}
        R_2 \le \Opnorm{\Sigma}\Fnorm{\Sigma - \Sigma^{\widehat{\pi}^{-1} }} \le \Fnorm{\Sigma - \Sigma^{\widehat{\pi}^{-1} }} = \Fnorm{\Sigma - \Sigma^{\widehat{\pi} }}. 
    \end{align*}
\end{itemize}

A combination of the above displays tells that
\begin{align}\label{eq:GW_loss_difference}
    R_{m, n}^\GW(\widehat{\pi}) - R_{m, n}^\GW(\id) &\ge \Fnorm{\Sigma^{\widehat{\pi}} - \Sigma}^2 - u\Fnorm{\Sigma^{\widehat{\pi}} - \Sigma} - v, 
\end{align}
where
\begin{align*}
    u = C\sqrt{\frac{d\log d+\log(1/\delta)}{n}}, \qquad v = C\sqrt{\frac{d(d\log d + \log(1/\delta))(d + \log(1/\delta)))}{mn}}, 
\end{align*}
and $C>0$ is an absolute constant independent of $(n,m,d)$.

Finally, by definition of $\widehat{\pi} = \widehat{\pi}^\GW$ we have $R_{m, n}^\GW(\widehat{\pi}) - R_{m, n}^\GW(\id) \le 0$. Hence, by \eqref{eq:GW_loss_difference} and Lemma \ref{lemma:quad_bound} we conclude that when conditioned on $\calG$, 
\begin{align*}
\Fnorm{\Sigma^{\widehat{\pi}} - \Sigma}^2 &\le (u + \sqrt{v})^2 \le 2(u^2 + v) \\
&= O\left(\frac{d\log d + \log(1/\delta)}{n} + \sqrt{\frac{d(d\log d + \log(1/\delta))(d + \log(1/\delta))}{mn}} \right).
\end{align*}
The above inequality together with $\mathbb{P}(\calG)\ge 1-\delta$ in Lemma \ref{lemma:GW_good_event} completes the proof of Theorem \ref{thm:upper_bound_GW}. 

\section{Proof of lower bounds}\label{sec:lower_bound}
In this section we prove the minimax lower bound in Theorem \ref{thm:lower_bound}, and by symmetry we assume that $m\ge n$. We use an information-theoretic argument: after specifying the prior distribution on $(\Sigma,\pi^\star)$ in Section \ref{subsec:prior}, we invoke Fano's inequality and upper bound the mutual information $I(\pi^\star; X^m, Y^n)$ in Section \ref{subsec:mutual_info}. The proof of the mutual information upper bound consists of three steps: 
\begin{enumerate}
    \item By the variational representation of the mutual information, we first reduce the weak recovery of $\pi^\star$ to the detection of $\Sigma$. This step is similar to \cite{hall2023partial,wu2022settling};
    \item By the superadditivity of the KL divergence between a stationary process and an iid process, we reduce the mutual information to the case with an equal sample size, where we also have $m$ observations from $Y$; 
    \item By passing to the $\chi^2$-divergence, we carry out the second moment computation of the likelihood ratio and derive the final upper bound of $I(\pi^\star; X^m, Y^n)$. 
\end{enumerate}

\subsection{Construction of the prior}\label{subsec:prior}
To prove a Bayes risk lower bound, we impose the following prior distribution on $(\Sigma,\pi^\star)$. The choice of the prior on $\pi^\star$ is natural: we simply take $\pi^\star\sim \text{Unif}(S_d)$ to be a uniformly random permutation. As for the prior distribution over $\Sigma$, we set
\begin{align*}
\Sigma = \frac{1}{2}(I_d + \eta S), 
\end{align*}
with some parameter
\begin{align}\label{eq:eta_range}
    \eta \in \left(0, \frac{1}{2c_1\sqrt{d}}\right)
\end{align}
to be chosen later (here $c_1>0$ is an absolute constant appearing in Lemma \ref{lemma:S0_construction}), and a random matrix $S\sim \text{Unif}(\calS_0)$. The set of matrices $\calS_0$ is constructed so that the following three properties hold: 
\begin{enumerate}
    \item It is a subset of symmetric Rademacher matrices, i.e. $\calS_0 \subseteq \calS \triangleq \{S \in \{\pm 1\}^{d\times d}: S = S^\top \}$; 
    \item Every matrix in $\calS_0$ has a small operator norm: $\Opnorm{S}\le c_1\sqrt{d}$ for all $S\in \calS_0$; 
    \item Different permutations to a matrix in $\calS_0$ give different matrices: for all $S\in \calS_0$ and $\pi_1, \pi_2\in S_d$ with $\sum_{i=1}^d \mathbbm{1}(\pi_1(i)\neq \pi_2(i)) \ge d/10$, it holds that $\Fnorm{S^{\pi_1} - S^{\pi_2}}^2 \ge c_2d^2.$
\end{enumerate}
Note that the choice of $\eta$ in \eqref{eq:eta_range} and the second property of $\calS_0$ ensure that $\Opnorm{\Sigma}\le 1$ almost surely. The next lemma shows that $\calS_0$ can be constructed as a sufficiently large subset of $\calS$. 

\begin{lemma}\label{lemma:S0_construction}
There exist absolute constants $c_1,c_2,d_0>0$ such that the following holds: if $d\ge d_0$, there exists some $\calS_0\subseteq \calS$ such that all the above properties hold, and
\begin{align*}
    \frac{|\calS_0|}{|\calS|} \ge \frac{1}{2}. 
\end{align*}
\end{lemma}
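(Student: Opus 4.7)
The plan is to use the probabilistic method, drawing $S$ uniformly from $\mathcal{S}$ (so its $\binom{d+1}{2}$ diagonal and upper-triangular entries are i.i.d.\ Rademacher) and arguing that properties (2) and (3) each hold with probability at least $3/4$; by a union bound the set $\mathcal{S}_0$ of matrices satisfying both then has cardinality at least $|\mathcal{S}|/2$ and automatically inherits property (1). Property (2) is standard: spectral concentration for Wigner matrices with subgaussian entries (e.g.\ Bai--Yin, or a non-commutative Bernstein bound) produces a universal $c_1>0$ with $\mathbb{P}(\|S\|_{\mathrm{op}} > c_1\sqrt d) \le 1/4$ for all $d$ sufficiently large.

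For property (3), the first move is to reduce from pairs of permutations to a single one. Since the Frobenius norm is invariant under simultaneous row/column permutation, $\|S^{\pi_1}-S^{\pi_2}\|_{\mathrm F}^2 = \|S-S^{\sigma}\|_{\mathrm F}^2$ with $\sigma := \pi_2\circ\pi_1^{-1}$, and the Hamming condition $\sum_i \mathbbm{1}\{\pi_1(i)\neq\pi_2(i)\}\ge d/10$ translates into $\sigma$ having at least $d/10$ non-fixed points. So it suffices to show that with probability $\ge 3/4$, every such $\sigma\in S_d$ satisfies $\|S-S^\sigma\|_{\mathrm F}^2 \ge c_2 d^2$. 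Expanding and using that $\mathbb{E}[S_{ij}S_{kl}]=\mathbbm{1}\{\{i,j\}=\{k,l\}\}$, one computes $\mathbb{E}\|S-S^\sigma\|_{\mathrm F}^2 = 2(d^2 - |F|^2 - 2k_2)$, where $F$ is the fixed-point set of $\sigma$ and $k_2$ its number of $2$-cycles. Writing $N = d-|F|\ge d/10$ and using $2k_2\le N$, this is at least $2N(2d-N-1)\ge d^2/10$ for $d$ moderately large.

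The concentration step is the heart of the argument. Viewing $\|S-S^\sigma\|_{\mathrm F}^2$ as a function of the $\binom{d+1}{2}$ independent Rademachers $(S_{\{k,l\}})_{k\le l}$, a given entry $S_{\{k,l\}}$ appears in at most four of the $d^2$ summands $(S_{ij}-S_{\sigma(i),\sigma(j)})^2$ (namely those with $\{i,j\}=\{k,l\}$ or $\{\sigma(i),\sigma(j)\}=\{k,l\}$), each summand is bounded by $4$, and each changes by at most $4$ under a single flip. Hence the total change under flipping any coordinate is bounded by a universal constant, and McDiarmid yields $\mathbb{P}\bigl(\|S-S^\sigma\|_{\mathrm F}^2 < d^2/20\bigr)\le 2\exp(-c d^2)$ for a universal $c>0$, uniformly in $\sigma$. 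A union bound over the at most $d!\le e^{d\log d}$ permutations $\sigma$ with $\ge d/10$ non-fixed points contributes only an $e^{d\log d}$ factor, and since $d\log d = o(d^2)$ the resulting failure probability is at most $1/4$ for $d\ge d_0$. Setting $c_2=1/20$ then gives property (3) with probability at least $3/4$.

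The only delicate point will be arranging the McDiarmid bookkeeping cleanly enough that the bounded-differences constant is a genuine $O(1)$ uniform in $\sigma$; this is what allows the sub-Gaussian tail $e^{-\Omega(d^2)}$ to absorb the $e^{O(d\log d)}$ cardinality of the symmetric group. No sharper device (Hanson--Wright, Talagrand) is necessary here, even though $\|S-S^\sigma\|_{\mathrm F}^2$ is a quadratic form in $S$, since the large gap between $d\log d$ and $d^2$ makes the crude bounded-differences bound already sufficient.
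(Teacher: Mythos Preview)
Your proposal is correct, and the overall architecture (probabilistic method, standard Wigner bound for property (2), reduce property (3) to a single permutation, compute the mean, concentrate, union bound over $S_d$) is exactly the paper's. The one genuine difference is the concentration tool for property (3): the paper writes $\|S-S^\sigma\|_{\mathrm F}^2$ as a quadratic form $X^\top A X$ in the vector $X=(S_{ij})_{i\le j}$, bounds $\trace(A)\ge d^2/10$, $\Fnorm{A}^2\lesssim d^2$, $\Opnorm{A}\le 12$, and applies Hanson--Wright with failure probability $1/(4(d!)^2)$ to get a deviation of order $d^{3/2}\sqrt{\log d}$; you instead use McDiarmid with bounded-differences constant $O(1)$ to get a tail of $e^{-\Omega(d^2)}$ directly. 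Both beat the $e^{O(d\log d)}$ union bound comfortably, so nothing is lost. Your route is more elementary (no matrix bookkeeping for $A$, no Hanson--Wright), while the paper's route gives a sharper deviation scale that would matter if the gap between $d\log d$ and the mean were tighter. Your expectation computation $\mathbb{E}\|S-S^\sigma\|_{\mathrm F}^2=2(d^2-|F|^2-2k_2)$ and the bounded-differences count (each Rademacher coordinate touches at most four summands, each bounded by $4$) are both correct, so the McDiarmid step goes through as stated.
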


We proceed to lower bound the Bayes risk under the above prior, and to this end we introduce the (generalized) Fano's inequality. Consider a general scenario where $\theta\in\Theta$ is an unknown parameter, the learner observes $X\sim P_\theta$, and $L: \Theta\times \calA\to \bR_+$ is a non-negative loss function with a generic action space $\mathcal{A}$. The following lemma presents a general lower bound of the Bayes risk. 

\begin{lemma}\label{lemma:Fano}
Let $\pi$ be any probability distribution over $\Theta$. For any $\Delta>0$, define
\begin{align*}
p_{\Delta} := \sup_{a} \pi\{ \theta\in \Theta: L(\theta,a)\le \Delta \}. 
\end{align*}
Then for $\theta\sim \pi$, the following Bayes risk lower bound holds: 
\begin{align*}
\inf_{T(\cdot)}\bE_{\pi} \bE_{\theta}[L(\theta, T(X))] \ge \Delta\left(1 - \frac{I(\theta;X) + \log 2}{\log(1/p_\Delta)}\right).
\end{align*}
\end{lemma}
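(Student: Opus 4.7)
The plan is to combine Markov's inequality with a standard reduction from estimation to binary hypothesis testing, using the data processing inequality and a one-sided Fano-type bound on the binary KL divergence. First, Markov's inequality gives
$$\bE_\pi \bE_\theta[L(\theta, T(X))] \ge \Delta \cdot \mathbb{P}\{L(\theta, T(X)) > \Delta\},$$
so the task reduces to showing that $q := \mathbb{P}\{L(\theta, T(X)) \le \Delta\}$ satisfies $q \le (I(\theta;X) + \log 2)/\log(1/p_\Delta)$.

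To obtain this, I would introduce the binary indicator $Z := \mathbbm{1}\{L(\theta, T(X)) \le \Delta\}$ and compare two joint laws on $(\theta, X)$: the true joint law $\bP$ under which $\bP[Z=1] = q$, and the product law $\bP' := \pi \otimes P_X$, where $P_X$ is the marginal of $X$. Since $\bP'[Z = 1 \mid X] = \pi\{\theta : L(\theta, T(X)) \le \Delta\}$ and the latter is at most $p_\Delta$ by the very definition of $p_\Delta$, one obtains $p := \bP'[Z=1] \le p_\Delta$. Moreover $D_{\text{KL}}(\bP \,\|\, \bP') = I(\theta;X)$ by definition of mutual information.

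The main mechanism is then the data processing inequality applied to the measurable map $(\theta, X) \mapsto Z$, which yields $D_{\text{KL}}(\mathrm{Bern}(q) \,\|\, \mathrm{Bern}(p)) \le I(\theta; X)$. I would finish with a one-sided estimate on binary KL: expanding
$$D_{\text{KL}}(\mathrm{Bern}(q)\,\|\,\mathrm{Bern}(p)) = q\log\frac{q}{p} + (1-q)\log\frac{1-q}{1-p},$$
using $\log(1/(1-p)) \ge 0$ to drop the unhelpful negative contribution, and bounding the binary entropy $-q\log q - (1-q)\log(1-q) \le \log 2$, one obtains $D_{\text{KL}}(\mathrm{Bern}(q)\,\|\,\mathrm{Bern}(p)) \ge q\log(1/p) - \log 2 \ge q\log(1/p_\Delta) - \log 2$. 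Rearranging yields the claimed bound on $q$, and plugging back into the Markov step concludes the proof.

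I do not foresee any real obstacle: the only subtle point is extracting the correct one-sided form of the binary Fano bound, which is a routine manipulation. Conceptually, the argument is the classical Fano inequality with the multi-way packing replaced by the single numerical parameter $p_\Delta$, and it is a standard template for deriving Bayes risk lower bounds against an arbitrary (possibly non-metric) loss $L$.
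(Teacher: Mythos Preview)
Your proof is correct and follows essentially the same route as the paper's: both apply the data processing inequality to the indicator $\mathbbm{1}\{L(\theta,T(X))\le\Delta\}$, lower bound the resulting binary KL via the binary entropy bound $-q\log q-(1-q)\log(1-q)\le\log 2$, use $p\le p_\Delta$, and finish with Markov's inequality. The only cosmetic difference is that the paper compares $\bP$ against $\pi\otimes Q$ for an arbitrary auxiliary $Q$ and then minimizes over $Q$ at the end via the variational representation $I(\theta;X)=\min_Q D_{\mathrm{KL}}(\pi P_\theta\|\pi Q)$, whereas you directly plug in the optimal choice $Q=P_X$; this saves a line but is not a genuinely different argument.
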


Lemma \ref{lemma:Fano} was essentially proved in \cite{duchi2013distance,chen2016bayes}, and for completeness we include the proof of the above form in Appendix~\ref{subsec:Fano_proof}. To apply this lemma, we will choose $\theta = \pi^\star$, so that $P_{\pi^\star}$ is the mixture distribution $\bE_{\Sigma}[\cN(0,\Sigma)^{\otimes m}\otimes \cN(0,\Sigma^{\pi^\star})^{\otimes n}]$. The loss function $L(\pi^\star,\widehat{\pi})$ is defined as
\begin{align}\label{eq:loss_function}
L(\pi^\star,\widehat{\pi}) = \min_{\Sigma = (I_d+\eta S)/2: S\in \calS_0} \Fnorm{\Sigma^{\pi^\star} - \Sigma^{\widehat{\pi}}}^2 = \frac{\eta^2}{4}\min_{S\in \calS_0} \Fnorm{S^{\pi^\star} - S^{\widehat{\pi}}}^2. 
\end{align}
The next lemma provides an upper bound on the quantity $p_\Delta$ used in Lemma \ref{lemma:Fano}. 

\begin{lemma}\label{lemma:p_Delta}
Let the loss $L$ be given in \eqref{eq:loss_function}, and $p_\Delta$ be defined in Lemma \ref{lemma:Fano}. Then for $\Delta = c_2\eta^2d^2/4$, it holds that $p_\Delta \le d^{-c_3d}$, where $c_3>0$ is an absolute numerical constant. 
\end{lemma}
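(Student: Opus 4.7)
The plan is to use property~3 of the set $\calS_0$ from Lemma~\ref{lemma:S0_construction} to reduce the event $\{L(\pi^\star,\widehat{\pi})\le \Delta\}$ to a small Hamming-distance event, and then perform a standard counting argument on permutations close to the identity.

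First I would establish the following deterministic implication. For any $\pi_1,\pi_2\in S_d$ with Hamming distance $d_H(\pi_1,\pi_2):=\sum_{i=1}^d \1(\pi_1(i)\neq \pi_2(i))\ge d/10$, property~3 of $\calS_0$ asserts $\Fnorm{S^{\pi_1}-S^{\pi_2}}^2 \ge c_2 d^2$ for \emph{every} $S\in \calS_0$. Taking the minimum over $S\in \calS_0$ in the definition~\eqref{eq:loss_function} then gives $L(\pi_1,\pi_2) \ge c_2\eta^2 d^2/4 = \Delta$. Contrapositively, for any fixed action $\widehat{\pi}\in S_d$,
\[
\{\pi^\star\in S_d : L(\pi^\star,\widehat{\pi})\le \Delta\} \subseteq \{\pi^\star\in S_d : d_H(\pi^\star,\widehat{\pi})< d/10\}.
\]

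Next I would exploit the left-invariance of the uniform prior on $S_d$: if $\pi^\star\sim\text{Unif}(S_d)$ then $\sigma := \widehat{\pi}^{-1}\circ \pi^\star\sim\text{Unif}(S_d)$, and $d_H(\pi^\star,\widehat{\pi})$ equals the number of non-fixed points of $\sigma$, i.e. $d_H(\sigma,\id)$. Therefore, uniformly in the action $\widehat{\pi}$,
\[
p_\Delta \le \Pr_{\sigma\sim\text{Unif}(S_d)}[d_H(\sigma,\id)< d/10] = \frac{N}{d!},
\]
where $N$ is the number of permutations of $[d]$ with fewer than $d/10$ non-fixed points. Since the number of permutations with exactly $k$ non-fixed points is $\binom{d}{k}D_k\le \binom{d}{k}k! = d!/(d-k)! \le d^k$ (with $D_k$ the $k$th derangement number), I would conclude
\[
N \le \sum_{k=0}^{\lfloor d/10\rfloor} d^k \le 2d^{d/10}.
\]
Combined with Stirling's lower bound $d!\ge (d/e)^d$, this yields $p_\Delta \le 2 e^d d^{-9d/10} \le d^{-c_3 d}$ for any fixed $c_3<9/10$ once $d$ is larger than some absolute constant (which can be absorbed into the $d_0$ from Lemma~\ref{lemma:S0_construction}).

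This proof presents no essential obstacle; the heart of the work has already been done in the construction of $\calS_0$, whose third property makes the reduction from a Frobenius-norm condition to a Hamming-distance condition immediate. The remaining steps---invariance of the uniform measure, counting near-identity permutations, and Stirling's formula---are routine, and the main care needed is simply to verify that $c_3<9/10$ can be chosen as an absolute constant independent of $(n,m,d)$.
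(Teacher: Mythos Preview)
Your proposal is correct and follows essentially the same route as the paper: use property~3 of $\calS_0$ to pass from the Frobenius condition to a Hamming-ball condition, then bound the uniform measure of a Hamming ball by counting near-identity permutations and applying Stirling. The only cosmetic difference is in the counting step---the paper bounds $\sum_{k\le d/10} 1/(d-k)!$ by $(d/10+1)/\lceil 9d/10\rceil!$, whereas you bound $\sum_{k\le d/10} d^k$ and divide by $(d/e)^d$---but both arrive at $d^{-c_3 d}$ for an absolute $c_3>0$.
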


By Lemma \ref{lemma:Fano} and Lemma \ref{lemma:p_Delta}, as well as the definition of $L$ in \eqref{eq:loss_function}, we have
\begin{align}\label{eq:Fano_lower_bound}
\bE\left[ \Fnorm{\Sigma^{\pi^\star} - \Sigma^{\widehat{\pi}}}^2 \right] \ge \bE\left[ L(\pi^\star, \widehat{\pi}) \right] \ge \frac{c_2\eta^2 d^2}{4}\left(1 - \frac{I(\pi^\star; X^m, Y^n) +\log 2}{c_3d\log d}\right). 
\end{align}
It remains to upper bound the mutual information $I(\pi^\star; X^m, Y^n)$, which is deferred to Section \ref{subsec:mutual_info}.

\subsection{Upper bounding the mutual information}\label{subsec:mutual_info}
In this section we aim to prove the following upper bound of mutual information: 
\begin{align}\label{eq:mutual_info_target}
I(\pi^\star; X^m, Y^n) \le c_4\min\left\{nd^2\eta^2, mnd^2\eta^4 + 1\right\},
\end{align}
where $c_4>0$ is an absolute constant independent of $(n,m,d,\eta)$. We first show that \eqref{eq:mutual_info_target} implies the desired minimax lower bound in Theorem \ref{thm:lower_bound}. To see it, we choose
\begin{align*}
    \eta = c_5\max\left\{\sqrt{\frac{\log d}{nd}}, \left(\frac{\log d}{mnd}\right)^{1/4}   \right\}
\end{align*}
with a small absolute constant $c_5>0$, then Theorem \ref{thm:lower_bound} is a direct consequence of \eqref{eq:Fano_lower_bound}. Also note that since $m\ge d$ and $n\ge \log d$, our choice of $\eta$ satisfies the constraint in \eqref{eq:eta_range}. 

In the rest of this section we prove \eqref{eq:mutual_info_target}. The $O(nd^2\eta^2)$ upper bound essentially corresponds to the known covariance case where $m=\infty$, and is easily established in the following lemma. 

\begin{lemma}\label{lemma:simple_upperbound}
Regardless of the value of $m$, it always holds that
\begin{align*}
I(\pi^\star; X^m, Y^n) \le \frac{nd^2\eta^2}{2}. 
\end{align*}
\end{lemma}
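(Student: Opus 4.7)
The plan is to reduce the full mutual information to the ``known-covariance'' case and then apply a routine Gaussian-KL calculation. The key observation is that $X^m$ carries no \emph{direct} information about $\pi^\star$---it only informs us about the nuisance parameter $\Sigma$.

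First, since $\pi^\star$ is drawn independently of $\Sigma$, and since conditionally on $\Sigma$ the sample $X^m$ is independent of $(\pi^\star, Y^n)$, the chain rule and data-processing inequality give
\begin{align*}
I(\pi^\star; X^m, Y^n)
\;\le\; I(\pi^\star; \Sigma, X^m, Y^n)
\;=\; I(\pi^\star;\Sigma) + I(\pi^\star; X^m, Y^n\mid\Sigma)
\;=\; I(\pi^\star; Y^n\mid\Sigma),
\end{align*}
where the last equality uses $\pi^\star\indep\Sigma$ (killing the first term) together with $X^m\indep(\pi^\star,Y^n)\mid\Sigma$ (removing $X^m$).

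Next, for the known-covariance quantity $I(\pi^\star; Y^n\mid\Sigma)$, I would invoke the variational bound $I(\theta;Z)\le \mathbb{E}_\theta D_{\text{KL}}(P_{Z\mid\theta}\,\|\,Q)$ with the $\pi^\star$-free reference $Q=\mathcal{N}(0,\Sigma)^{\otimes n}$ (corresponding to $\pi^\star=\id$). Since $Y_1,\ldots,Y_n$ are iid given $(\pi^\star,\Sigma)$, the KL divergence tensorizes into $n$ identical terms, yielding
\begin{align*}
I(\pi^\star; Y^n\mid\Sigma) \;\le\; n\cdot \mathbb{E}_{\Sigma,\pi^\star}\!\left[D_{\text{KL}}\!\left(\mathcal{N}(0,\Sigma^{\pi^\star})\,\big\|\,\mathcal{N}(0,\Sigma)\right)\right].
\end{align*}

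Finally, since the permutation action preserves the determinant, $\det(\Sigma^{\pi^\star})=\det(\Sigma)$, so the log-determinant term in the Gaussian KL vanishes and leaves $D_{\text{KL}} = \tfrac{1}{2}\tr(\Sigma^{-1}(\Sigma^{\pi^\star}-\Sigma))$. I would then substitute $\Sigma=(I_d+\eta S)/2$, expand $\Sigma^{-1}=2(I_d+\eta S)^{-1}$ as a Neumann series (valid since $\Opnorm{\eta S}\le \tfrac{1}{2}$ by \eqref{eq:eta_range} and Lemma \ref{lemma:S0_construction}), and observe that the $O(\eta)$ contribution drops because $\tr(S^{\pi^\star})=\tr(S)$. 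The leading surviving term is of order $\eta^2\bigl[\Fnorm{S}^2 - \tr(SS^{\pi^\star})\bigr] = \tfrac{\eta^2}{2}\Fnorm{S^{\pi^\star}-S}^2 \le 2\eta^2 d^2$, and the geometric Neumann tail of the same order is controlled using $\Opnorm{\eta S}\le 1/2$. There is essentially no real obstacle here; the only minor care is in the bookkeeping needed to squeeze the constant down to the advertised $\tfrac{1}{2}$, reflecting the fact that this bound is just the trivial ``$m=\infty$'' (known-covariance) inequality.
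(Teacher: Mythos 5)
Your overall route is the paper's: reduce to the known-covariance case and then bound a single-sample Gaussian KL via a variational/reference-measure argument. Your first reduction ($I(\pi^\star;X^m,Y^n)\le I(\pi^\star;Y^n\mid\Sigma)$ by introducing $\Sigma$ and using conditional independence) is valid and slightly cleaner than the paper's, which conditions on $X^m$ and then removes it using convexity of the KL divergence. The one substantive difference is the choice of reference measure, and it is not as innocuous as you suggest. The paper takes $Q=\calN(0,I_d/2)^{\otimes n}$, the center of the prior, which gives per sample $D_{\mathrm{KL}}=\tfrac12\left[\eta\trace(S)-\log\det(I_d+\eta S)\right]=\tfrac12\sum_i\left[\eta\lambda_i-\log(1+\eta\lambda_i)\right]\le\tfrac12\eta^2\Fnorm{S}^2=\tfrac{\eta^2d^2}{2}$ directly from the scalar inequality $x-\log(1+x)\le x^2$ for $|x|\le 1/2$ (applicable since $\eta\Opnorm{S}\le 1/2$); no Neumann series and no tail to control. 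Your choice $Q=\calN(0,\Sigma)^{\otimes n}$ yields per sample $\tfrac12\trace(\Sigma^{-1}(\Sigma^{\pi^\star}-\Sigma))$, whose leading term is $\tfrac{\eta^2}{4}\Fnorm{S^{\pi^\star}-S}^2$; since $\Fnorm{S^{\pi^\star}-S}^2$ can be as large as $4d^2$, this alone can reach $\eta^2 d^2$ for a fixed $(S,\pi^\star)$ --- already twice the per-sample budget $\tfrac{\eta^2d^2}{2}$, before adding the geometric tail. So the advertised constant $\tfrac12$ is not a matter of ``bookkeeping'' along your route: you would either have to exploit the average over $\pi^\star\sim\mathrm{Unif}(S_d)$ together with $|\1_d^\top S\1_d|\le\Opnorm{S}d$ to show $\bE_{\pi^\star}\Fnorm{S^{\pi^\star}-S}^2\approx 2d^2$, or simply switch to the paper's reference. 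Since the lemma is only used downstream inside a bound with an unspecified absolute constant $c_4$, your argument does establish what the paper actually needs ($I=O(n\eta^2d^2)$), but as a proof of the stated inequality with constant $\tfrac12$ it falls short.
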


In the sequel we establish the $O(mnd^2\eta^4 + 1)$ upper bound.

\subsubsection{Reduction to the case $m=n$}\label{subsec:reduction}
We first prove an upper bound of $I(\pi^\star; X^m, Y^n)$ which no longer involves $\pi^\star$; the high-level idea is to relate the recovery of $\pi^\star$ (under the loss $L(\pi^\star,\widehat{\pi}$)) to a detection problem of telling if $X^m, Y^n$ are i.i.d. drawn from $ \calN(0,I_d/2)$. To this end, we write
\begin{align*}
I(\pi^\star; X^m, Y^n) &= I(\pi^\star; X^m) + I(\pi^\star; Y^n\mid X^m) \\
&\stepa{=} I(\pi^\star; Y^n\mid X^m) \\
&\stepb{=} \bE_{X^m, \pi^\star}[D_{\text{KL}}( P_{Y^n\mid \pi^\star, X^m} \| \calN(0,I_d/2)^{\otimes n} ) ]  - \bE_{X^m}[D_{\text{KL}}(P_{Y^n\mid X^m} \| \calN(0,I_d/2)^{\otimes n})] \\
&\le \bE_{X^m, \pi^\star}[D_{\text{KL}}( P_{Y^n\mid \pi^\star, X^m} \| \calN(0,I_d/2)^{\otimes n} ) ] \\
&\stepc{=}  \bE_{X^m, \pi^\star}[D_{\text{KL}}( \Phi^{\pi^\star}_\# P_{Y^n\mid X^m} \| \Phi^{\pi^\star}_\# \calN(0,I_d/2)^{\otimes n} ) ]\\
&\stepd{=} \bE_{X^m}[D_{\text{KL}}( P_{Z^n\mid X^m} \| \calN(0,I_d/2)^{\otimes n} ) ]. 
\end{align*}
Here (a) is due to the independence between $\pi^\star$ and $X^m$, (b) follows from simple algebra (this step is also known as the variational representation of the mutual information). In (c), we define a bijection $\Phi^{\pi^\star}: \mathbb{R}^d\to \mathbb{R}^d$ with
\begin{align*}
    \Phi^{\pi^\star}(y_1,\cdots,y_d) = (y_{(\pi^\star)^{-1}(1)}, \cdots, y_{(\pi^\star)^{-1}(d)}). 
\end{align*}
This definition is also extended in the natural way to $(\mathbb{R}^d)^n$ as $\Phi^{\pi^\star}(Y_1,\cdots,Y_n) = (\Phi^{\pi^\star}Y_1,\cdots,\Phi^{\pi^\star}Y_n)$. Clearly $\Phi^{\pi^\star}$ is a bijection, so (c) holds. As for (d), we introduce an auxiliary sequence of random vectors $Z_1, Z_2, \cdots$, which conditioned on $\Sigma$ are i.i.d. drawn from $\calN(0,\Sigma)$. Since $\pi^\star$ is independent of $X^m$, it is clear that $\Phi^{\pi^\star}_\# P_{Y^n\mid X^m} = P_{Z^n\mid X^m}$. In addition, $\Phi^{\pi^\star}_\# \calN(0,I_d/2)^{\otimes n} = \calN(0,I_d/2)^{\otimes n}$ trivially holds, so we arrive at (d). Note that the final expression no longer depends on $\pi^\star$. 

In the second step, we note that $P_{Z^n\mid X^m} = \bE_{\Sigma\mid X^m}[P_{Z^n\mid \Sigma}]$ is a mixture of product distributions and therefore exchangeable. Meanwhile, the second argument $\calN(0, I_d/2)^{\otimes n}$ in the KL divergence is a product distribution. This structure enables us to prove the following lemma and reduce two parameters $(m,n)$ to a single parameter $m$. 

\begin{lemma}\label{lemma:superadditivity}
Let $(Z_1,Z_2,\cdots)$ be a stationary process and $Q$ be an arbitrary distribution. Then the sequence $a_n = D_{\text{\rm KL}}(P_{Z^n}\| Q^{\otimes n})$ satisfies that $\{a_{n+1}-a_n\}$ is non-decreasing, i.e.
\begin{align*}
    a_1 \le a_2 - a_1 \le a_3-a_2\le \cdots.
\end{align*}
In particular, if $m\ge n$, then $a_m/m \ge a_n/n$. 
\end{lemma}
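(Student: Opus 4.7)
The plan is to show $a_{n+1} - a_n \le a_{n+2} - a_{n+1}$ using three ingredients: the chain rule for KL divergence, stationarity of $(Z_k)$ to shift the conditioning window, and joint convexity of the KL divergence in its first argument. Since $Q^{\otimes(n+1)} = Q^{\otimes n} \otimes Q$, the chain rule gives
\begin{align*}
a_{n+1} - a_n = \bE_{Z^n}\big[D_{\text{KL}}(P_{Z_{n+1}\mid Z^n} \,\|\, Q)\big], \qquad a_{n+2} - a_{n+1} = \bE_{Z^{n+1}}\big[D_{\text{KL}}(P_{Z_{n+2}\mid Z^{n+1}} \,\|\, Q)\big].
\end{align*}

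Writing $Z_2^{n+1} := (Z_2, \ldots, Z_{n+1})$, stationarity implies that $(Z_2, \ldots, Z_{n+2})$ has the same law as $(Z_1, \ldots, Z_{n+1})$, so the first identity above can be rewritten as
\begin{align*}
a_{n+1} - a_n = \bE_{Z_2^{n+1}}\big[D_{\text{KL}}(P_{Z_{n+2}\mid Z_2^{n+1}} \,\|\, Q)\big].
\end{align*}
It therefore suffices to show that conditioning on the additional variable $Z_1$ only enlarges the conditional KL in expectation. This is exactly joint convexity of KL: since $P_{Z_{n+2}\mid Z_2^{n+1}}$ is the $P_{Z_1\mid Z_2^{n+1}}$-mixture of the conditional laws $P_{Z_{n+2}\mid Z^{n+1}}$, Jensen's inequality gives
\begin{align*}
\bE_{Z_2^{n+1}}\big[D_{\text{KL}}(P_{Z_{n+2}\mid Z_2^{n+1}} \,\|\, Q)\big] \le \bE_{Z^{n+1}}\big[D_{\text{KL}}(P_{Z_{n+2}\mid Z^{n+1}} \,\|\, Q)\big],
\end{align*}
which is precisely $a_{n+1} - a_n \le a_{n+2} - a_{n+1}$. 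The base case $a_1 \le a_2 - a_1$ is the same argument with $n = 0$: the left-hand side has empty conditioning and equals $D_{\text{KL}}(P_{Z_2}\|Q)$ by stationarity, while the right-hand side conditions on $Z_1$.

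For the ``in particular'' statement, set $b_k := a_{k+1} - a_k$, so that $a_n = \sum_{k=0}^{n-1} b_k$ and $a_n/n$ is the Ces\`aro average of the non-decreasing sequence $(b_k)_{k\ge 0}$. Ces\`aro averages of non-decreasing sequences are themselves non-decreasing, since
\begin{align*}
\frac{1}{n+1}\sum_{k=0}^{n} b_k - \frac{1}{n}\sum_{k=0}^{n-1} b_k = \frac{1}{n+1}\bigg(b_n - \frac{1}{n}\sum_{k=0}^{n-1} b_k\bigg) \ge 0,
\end{align*}
which yields $a_m/m \ge a_n/n$ whenever $m \ge n$. I do not anticipate any substantive obstacle; the only point requiring care is correctly tracking which indices are conditioned upon when combining the chain rule with the stationarity shift, ensuring that the two KL expressions being compared differ exactly by one extra conditioning variable so that joint convexity can be applied cleanly.
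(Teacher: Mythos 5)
Your proof is correct and follows essentially the same route as the paper's: the chain rule for KL against the product measure $Q^{\otimes n}$, convexity of $P \mapsto D_{\text{KL}}(P\|Q)$ to absorb the extra conditioning variable, and stationarity to align the indices (the paper applies convexity before the stationarity shift and proves $a_{n+1}-a_n \ge a_n-a_{n-1}$, but this is only a cosmetic reordering). The Ces\`aro-average argument for the ``in particular'' part likewise matches the paper's telescoping computation.
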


Consequently, we proceed the upper bound as
\begin{align*}
I(\pi^\star; X^m, Y^n) &\le \bE_{X^m}[D_{\text{KL}}( P_{Z^n\mid X^m} \| \calN(0,I_d/2)^{\otimes n} ) ] \\
&\stepe{\le} \frac{n}{m}\bE_{X^m}[D_{\text{KL}}( P_{Z^m\mid X^m} \| \calN(0,I_d/2)^{\otimes m} ) ] \\
&\stepf{=} \frac{n}{m}\left( D_{\text{KL}}(P_{X^m,Z^m} \| \calN(0,I_d/2)^{\otimes (2m)}) - D_{\text{KL}}(P_{X^m} \| \calN(0,I_d/2)^{\otimes m})  \right) \\
&\le \frac{n}{m}D_{\text{KL}}(P_{X^m,Z^m} \| \calN(0,I_d/2)^{\otimes (2m)}), 
\end{align*}
where (e) follows from Lemma \ref{lemma:superadditivity} and $m\ge n$, and (f) is the chain rule of KL divergence. In summary, we have shown that
\begin{align}\label{eq:KL_equal_sample}
I(\pi^\star; X^m, Y^n) \le \frac{n}{m}D_{\text{KL}}(\bE_{\Sigma}[\calN(0,\Sigma)^{\otimes (2m)}] \| \calN(0,I_d/2)^{\otimes (2m)}). 
\end{align}
Note that in \eqref{eq:KL_equal_sample} we essentially reduce to the case with equal sample sizes. Without this reduction we would only have $I(\pi^\star; X^m, Y^n)\le D_{\text{KL}}(\bE_{\Sigma}[\calN(0,\Sigma)^{\otimes (n+m)}] \| \calN(0,I_d/2)^{\otimes (n+m)})$, which could be shown to be loose compared with the target upper bound in \eqref{eq:mutual_info_target}. 

\subsubsection{Second moment method}\label{subsec:second_moment_method}
Based on \eqref{eq:KL_equal_sample} we further have
\begin{align*}
I(\pi^\star; X^m, Y^n) &\stepa{\le} \frac{n}{m}\log(1+\chi^2(\bE_{\Sigma}[\calN(0,\Sigma)^{\otimes (2m)}] \| \calN(0,I_d/2)^{\otimes (2m)}))
\end{align*}
which follows from $D_{\text{KL}}(P\|Q)\le \log(1+\chi^2(P\|Q))$. Now to bound the $\chi^2$-divergence we derive the following lemma based on the standard second moment computation (cf. \cite[Appendix A]{fan2015estimation}) which we prove for completeness in Appendix~\ref{subsec:second_moment_proof}.
\begin{lemma}\label{lemma:second_moment}
Let $\Sigma_S$ be a symmetric matrix in $\mathbb{R}^{d \times d}$ indexed by a random variable $S\sim \mu$. Further assume that $0\prec \Sigma_S \prec 2I_d$ almost surely. Then for $p \in \mathbb{N}$, we have the identity
\begin{align*}
    \chi^2(\bE_{S}[\calN(0,\Sigma_S)^{\otimes p}] \| \calN(0, I_d)^{\otimes p}) + 1 = \bE_{S, T}\Big[\det\Big(I - (\Sigma_S - I)(\Sigma_T - I)\Big)^{-\frac{p}{2}}\Big], 
\end{align*}
where $S, T \sim \mu$ are independent. 
\end{lemma}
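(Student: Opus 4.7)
The plan is to unfold the $\chi^2$ divergence via its integral representation and then exploit the fact that both measures are products of Gaussians, so that the computation reduces (per coordinate block of $\mathbb{R}^d$) to a standard Gaussian integral whose value is an explicit determinant. Write $q(x) := p_{\cN(0,I_d)}(x)$ and $f_S(x) := p_{\cN(0,\Sigma_S)}(x)$. By the product structure of the densities,
\begin{align*}
\chi^2\!\bpth{\bE_S[\cN(0,\Sigma_S)^{\otimes p}] \,\|\, \cN(0,I_d)^{\otimes p}} + 1
= \int_{(\bR^d)^p} \frac{(\bE_S[\prod_{i=1}^p f_S(x_i)])^2}{\prod_{i=1}^p q(x_i)}\, dx.
\end{align*}
Squaring the inner expectation introduces an independent copy $T \sim \mu$, and Fubini separates the $p$ coordinates:
\begin{align*}
\chi^2 + 1 = \bE_{S,T} \pth{\int_{\bR^d} \frac{f_S(x)\,f_T(x)}{q(x)} dx}^{\!p}.
\end{align*}

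Next I would compute the inner integral in closed form. Using $f_\Sigma(x) = (2\pi)^{-d/2}\det(\Sigma)^{-1/2}\exp(-\tfrac12 x^\top \Sigma^{-1}x)$, the integrand equals
\begin{align*}
(2\pi)^{-d/2}\det(\Sigma_S)^{-1/2}\det(\Sigma_T)^{-1/2}\exp\!\pth{-\tfrac12 x^\top (\Sigma_S^{-1}+\Sigma_T^{-1}-I)\,x}.
\end{align*}
The hypothesis $0 \prec \Sigma_S \prec 2I_d$ guarantees $\Sigma_S^{-1} \succ \tfrac12 I$, and similarly for $\Sigma_T$, so the matrix $M := \Sigma_S^{-1}+\Sigma_T^{-1} - I$ is strictly positive definite almost surely and the integral converges. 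A standard Gaussian integral then evaluates it to $\det(\Sigma_S)^{-1/2}\det(\Sigma_T)^{-1/2}\det(M)^{-1/2}$.

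The remaining step is purely algebraic: combine the three determinants into a single one. Multiplying inside,
\begin{align*}
\det(\Sigma_S)\det(M)\det(\Sigma_T) = \det\!\pth{\Sigma_S\pth{\Sigma_S^{-1}+\Sigma_T^{-1}-I}\Sigma_T} = \det(\Sigma_T + \Sigma_S - \Sigma_S\Sigma_T),
\end{align*}
and expanding $(\Sigma_S - I)(\Sigma_T - I) = \Sigma_S\Sigma_T - \Sigma_S - \Sigma_T + I$ gives exactly $I - (\Sigma_S - I)(\Sigma_T - I) = \Sigma_S + \Sigma_T - \Sigma_S\Sigma_T$. Substituting back, the single-coordinate integral equals $\det\!\bpth{I - (\Sigma_S - I)(\Sigma_T - I)}^{-1/2}$, and raising to the $p$-th power and taking the expectation in $(S,T)$ yields the claimed identity. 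The only mild subtlety is verifying positive definiteness of $M$ (which is where the assumption $\Sigma_S \prec 2I_d$ is used) so that the Gaussian integral is legitimate; everything else is bookkeeping.
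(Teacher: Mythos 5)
Your proposal is correct and follows essentially the same route as the paper's proof: square the mixture to introduce an independent copy $T$, use the product structure to reduce to the $p$-th power of a single-coordinate integral $\int f_S f_T/q$, evaluate that Gaussian integral (legitimate since $\Sigma_S^{-1}+\Sigma_T^{-1}-I\succ 0$ under the hypothesis), and combine the determinants via $\Sigma_S(\Sigma_S^{-1}+\Sigma_T^{-1}-I)\Sigma_T=\Sigma_S+\Sigma_T-\Sigma_S\Sigma_T=I-(\Sigma_S-I)(\Sigma_T-I)$. The only cosmetic difference is that the paper evaluates the Gaussian integral by diagonalizing and using the moment generating function of $Z_k^2$, whereas you invoke the closed-form Gaussian integral directly; the substance is identical.
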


Using Lemma \ref{lemma:second_moment} we can write
\begin{align*}
\log(1+\chi^2(\bE_{\Sigma}[\calN(0,\Sigma)^{\otimes (2m)}] \| \calN(0,I_d/2)^{\otimes (2m)})) &\stepb{=} \log \bE_{S,T}[\det(I_d - \eta^2 ST)^{-m}], 
\end{align*}
with $T$ being an independent copy of $S$. Here (b) applies Lemma \ref{lemma:second_moment} to $\Sigma_S = I_d + \eta S$, where we have also used the invariance of the $\chi^2$-divergence with respect to bijections. The condition of Lemma \ref{lemma:second_moment} is fulfilled thanks to $\eta < 1/(2c_1\sqrt{d})$ from \eqref{eq:eta_range} and $\Opnorm{S}\le c_1\sqrt{d}$ from the second property of $\calS_0$. 

To proceed, let $\lambda_1,\cdots,\lambda_d$ be the (positive real) eigenvalues of $ST$, and we write
\begin{align*}
\bE_{S,T}[\det(I_d - \eta^2 ST)^{-m}] &= \bE_{S,T}\left[\exp\left(-m\sum_{i=1}^d \log(1-\eta^2 \lambda_i)\right) \right] \\
&\stepc{\le} \bE_{S,T}\left[\exp\left(-m\sum_{i=1}^d (-\eta^2 \lambda_i - \eta^4\lambda_i^2)\right) \right] \\
&= \bE_{S,T}\left[\exp\left(m\eta^2 \jiao{S,T} + m\eta^4 \Fnorm{ST}^2 \right) \right] \\
&\stepd{\le} \exp(c_1^2 md^3\eta^4)\cdot \bE_{S,T}\left[\exp\left(m\eta^2 \jiao{S,T}\right)\right]
\end{align*}
where in (c) we have used that
\begin{align*}
    \eta^2|\lambda_i| \le \left(\frac{1}{2c_1\sqrt{d}}\right)^2 \Opnorm{S}\Opnorm{T} \le \frac{1}{4}
\end{align*}
from \eqref{eq:eta_range} and the second property for $\calS_0$, as well as $\log(1+x)\ge x - x^2$ for all $|x|\le 1/4$; (d) follows from $\Fnorm{ST}\le \Opnorm{S}\Fnorm{T} \le c_1d^{3/2}$ almost surely. For the remaining quantity, we have
\begin{align*}
\bE_{S,T}\left[\exp\left(m\eta^2 \jiao{S,T}\right)\right] &\le \left(\frac{|\calS|}{|\calS_0|}\right)^2 \bE_{S,T\sim \text{Unif}(\calS)}\left[\exp\left(m\eta^2 \jiao{S,T}\right)\right] \\
&\stepe{\le} 4\bE_{S,T\sim \text{Unif}(\calS)}\left[\exp\left(m\eta^2 \left(\sum_{i=1}^d S_{ii}T_{ii}  + 2\sum_{1\le i<j\le d} S_{ij}T_{ij}\right)\right) \right] \\
&\stepf{\le} 4\exp\left( \frac{1}{2}m^2\eta^4d + 2m^2\eta^4 \binom{d}{2}\right) \le 4\exp(m^2\eta^4d^2). 
\end{align*}
Here (e) follows from Lemma \ref{lemma:S0_construction}, and (f) makes use of the independence and 1-subGaussianity of the Rademacher random variables. Piecing everything together gives that
\begin{align*}
I(\pi^\star; X^m, Y^n) = O\left(mnd^2\eta^4 + nd^3\eta^4 + 1\right) = O(mnd^2\eta^4 + 1), 
\end{align*}
where the last step is due to our assumption $m\ge d$. This proves the second upper bound in \eqref{eq:mutual_info_target}.

\section{Discussions}\label{sec:discussion}

\subsection{Other notions of minimax risk}\label{subsec:other_minimax}
Instead of using the squared Frobenius norm $\Fnorm{\Sigma^{\widehat{\pi}} - \Sigma^{\pi^\star}}^2$ over the parameter set $\{ \Sigma\succeq 0: \Opnorm{\Sigma}\le 1\}$, one may also consider a normalized Frobenius norm
\begin{align}\label{eq:normalized_Frob}
    d_{\text{NF}, \Sigma}(\pi^\star, \widehat{\pi}) := \Fnorm{ (\Sigma^{\pi^\star})^{-1/2} (\Sigma^{\widehat{\pi}} - \Sigma^{\pi^\star})(\Sigma^{\pi^\star})^{-1/2} }, 
\end{align}
as well as another parameter set based on the trace: 
\begin{align}\label{eq:Sigma_R}
    \Sigma(R) := \left\{ \Sigma \succeq 0: \trace(\Sigma)\le R \right\}, \quad R>0. 
\end{align}
Note that $d_{\text{NF}, \Sigma}$ in \eqref{eq:normalized_Frob} is closely related to the total variation distance $\text{TV}(\calN(0,\Sigma^{\widehat{\pi}}), \calN(0,\Sigma^{\pi^\star}) )$ for density estimation (cf. \cite[Theorem 1.1]{devroye2018total}), and \eqref{eq:Sigma_R} uses an $\ell_1$ ball (instead of $\ell_\infty$) of the spectrum of the covariance $\Sigma$ to promote sparsity. The following theorem summarizes two different minimax risks over $\Sigma(R)$. 

\begin{theorem}\label{thm:minimax_risks}
Let $R>0$, $d\ge 2$, and $m\ge n$ be fixed sample sizes. Then
\begin{align}\label{eq:minimax_1}
\inf_{\widehat{\pi}}\sup_{\pi^\star\in S_d, \Sigma\in \Sigma(R)} \bE_{(\pi^\star,\Sigma)}\Fnorm{\Sigma^{\widehat{\pi}} - \Sigma^{\pi^\star}}^2 \asymp \frac{R^2}{n},
\end{align}
and the GW estimator achieves this upper bound. If in addition $n\ge c_0d\log d$ and $mn\ge c_0\sqrt{d^3\log d}$ for a sufficiently large numerical constant $c_0>0$, it holds that
\begin{align}\label{eq:minimax_2}
\inf_{\widehat{\pi}}\sup_{\pi^\star\in S_d, \Sigma\in \Sigma(R)} \bE_{(\pi^\star,\Sigma)}\left[d_{\text{\rm NF}, \Sigma}(\pi^\star, \widehat{\pi})^2\right]  \asymp \frac{d\log d}{n} + \sqrt{\frac{d^3\log d}{mn}}, 
\end{align}
and the QMLE achieves this upper bound. 
\end{theorem}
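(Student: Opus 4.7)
The plan is to handle the two parts of Theorem \ref{thm:minimax_risks} separately.

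For the $R^2/n$ rate \eqref{eq:minimax_1}, I would bypass the delicate machinery of Theorem \ref{thm:upper_bound_GW} and use only the optimality of $\widehat{\pi}^\GW$ for the GW objective. Since $\Fnorm{\hS_X^{\widehat{\pi}^\GW} - \hS_Y} \le \Fnorm{\hS_X^{\pi^\star} - \hS_Y}$, two applications of the triangle inequality together with the permutation invariance of $\Fnorm{\cdot}$ yield
\begin{align*}
\Fnorm{\Sigma^{\widehat{\pi}^\GW} - \Sigma^{\pi^\star}} \le 2\Fnorm{\hS_X - \Sigma} + 2\Fnorm{\hS_Y - \Sigma^{\pi^\star}}.
\end{align*}
The standard Wishart identity $\bE\Fnorm{\hS - \Sigma}^2 = ((\tr\Sigma)^2 + \tr(\Sigma^2))/n$ together with $\tr(\Sigma^2) \le \Opnorm{\Sigma}\tr\Sigma \le (\tr\Sigma)^2 \le R^2$ then gives $\bE\Fnorm{\Sigma^{\widehat{\pi}^\GW} - \Sigma^{\pi^\star}}^2 \lesssim R^2/n$. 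For the matching lower bound, I would use Le Cam's two-point method on $\pi^\star \in \{\id, (1\,2)\}$ against the single covariance $\Sigma = \mathrm{diag}(R(1/2+\delta), R(1/2-\delta), 0, \ldots, 0)$ (with a vanishing strictly positive perturbation to respect $\Sigma \succeq 0$ if needed). The Frobenius gap is $\Fnorm{\Sigma - \Sigma^{(1\,2)}}^2 = 8R^2\delta^2$, while the KL between the two observation laws reduces to $n$ times the per-sample one-dimensional KL between $\cN(0, R(1/2\pm\delta))$, a direct expansion of which shows it is $O(n\delta^2)$. Optimizing $\delta \asymp 1/\sqrt{n}$ recovers the $R^2/n$ lower bound.

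For the normalized Frobenius rate \eqref{eq:minimax_2}, my plan is to argue that the proof of Theorem \ref{thm:upper_bound_QMLE} is already scale invariant in $\Sigma$ up to its very last step, and directly produces a bound on $d_{\mathrm{NF},\Sigma}$. Indeed, inequality \eqref{eq:QMLE_trace_upperbound} bounds $\Fnorm{\Sigma^{-1/2}\Sigma^{\widehat{\pi}^{-1}}\Sigma^{-1/2} - I_d}^2$ at the claimed rate, and writing $B = \Sigma^{1/2}P_{\widehat{\pi}}\Sigma^{-1/2}$ one sees that $\Sigma^{-1/2}\Sigma^{\widehat{\pi}}\Sigma^{-1/2} = BB^\top$ and $\Sigma^{-1/2}\Sigma^{\widehat{\pi}^{-1}}\Sigma^{-1/2} = B^\top B$, so the two matrices share eigenvalues; taking WLOG $\pi^\star = \id$, this common squared Frobenius distance from $I_d$ equals $d_{\mathrm{NF},\Sigma}(\pi^\star,\widehat{\pi})^2$. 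Each Hanson--Wright event \eqref{eq:QMLE_E1}-\eqref{eq:QMLE_E2}, the concentration of $\Sigma^{-1/2}\hS_X\Sigma^{-1/2}$ in \eqref{eq:QMLE_E3}, Lemma \ref{lemma:remainder}, and Corollary \ref{cor:trace_frob} depend on $\Sigma$ only through spectrally invariant quantities; the hypothesis $\Opnorm{\Sigma}\le 1$ was invoked in Theorem \ref{thm:upper_bound_QMLE} exclusively at step~(a) of the closing display to pass from the normalized to the unnormalized loss, and can therefore be dropped here. For the lower bound I would reuse the prior construction of Section \ref{subsec:prior}, rescaled by $c = \min\{1, 2R/d\}$ so that $\tr\Sigma \le R$ almost surely. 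Two facts make this transparent: $d_{\mathrm{NF},\Sigma}^2$ is invariant under $\Sigma \mapsto c\Sigma$, and the KL divergence between $\cN(0, c\Sigma_1)$ and $\cN(0, c\Sigma_2)$ is also independent of $c$, so both the mutual information bound \eqref{eq:mutual_info_target} and the Fano step \eqref{eq:Fano_lower_bound} survive rescaling verbatim. A first-order expansion of $\Sigma^{-1/2}$ around $(c/2)^{-1/2}I_d$ in the prior then shows $d_{\mathrm{NF},\Sigma}(\pi^\star,\widehat{\pi})^2$ and the Frobenius loss of Theorem \ref{thm:lower_bound} agree up to numerical constants, transferring the lower bound directly.

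The most care-intensive step, which I expect to be the main obstacle, is verifying that every remainder in Lemma \ref{lemma:remainder} and every Hanson--Wright term in Section \ref{subsec:QMLE_good_event} is genuinely scale invariant in $\Sigma$, so that the normalized-loss upper bound can be read off from the existing QMLE analysis without any spectral-norm hypothesis on $\Sigma$. Beyond that, one additional bookkeeping subtlety is integrating the high-probability tail over $\delta$ under the stated conditions $n \ge c_0 d\log d$ and $mn \ge c_0\sqrt{d^3\log d}$ to obtain the in-expectation rate, but this follows the same calculation as the discussion right after Theorems \ref{thm:upper_bound_QMLE}-\ref{thm:upper_bound_GW}.
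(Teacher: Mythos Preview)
Your plan is essentially the paper's own proof. For \eqref{eq:minimax_1} the paper also bounds $\bE\Fnorm{\hS-\Sigma}^2 \le 2(\tr\Sigma)^2/n$ via Isserlis and then runs the same triangle-inequality-plus-GW-optimality chain; for the lower bound it likewise reduces to $d=2$ and invokes the parametric $\Omega(1/n)$ rate. For \eqref{eq:minimax_2} the paper also observes that the QMLE analysis through \eqref{eq:QMLE_trace_upperbound} and Corollary~\ref{cor:trace_frob} is scale invariant in $\Sigma$ and never uses $\Opnorm{\Sigma}\le 1$, so the check you flag as ``most care-intensive'' is correct.

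There is, however, a real slip in your conversion to the target loss. With $B=\Sigma^{1/2}P_{\widehat{\pi}}\Sigma^{-1/2}$ one gets
\[
BB^\top=\Sigma^{1/2}P_{\widehat{\pi}}\Sigma^{-1}P_{\widehat{\pi}}^\top\Sigma^{1/2}=\Sigma^{1/2}(\Sigma^{-1})^{\widehat{\pi}}\Sigma^{1/2},
\qquad
B^\top B=\Sigma^{-1/2}\Sigma^{\widehat{\pi}^{-1}}\Sigma^{-1/2},
\]
so the eigenvalue-sharing of $BB^\top$ and $B^\top B$ only reproves that the two Frobenius norms already in Corollary~\ref{cor:trace_frob} coincide; it does \emph{not} produce the target $\Fnorm{\Sigma^{-1/2}\Sigma^{\widehat{\pi}}\Sigma^{-1/2}-I_d}$. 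The obstruction is that $\Sigma^{-1/2}\Sigma^{\widehat{\pi}}\Sigma^{-1/2}$ is the \emph{inverse} of $\Sigma^{1/2}(\Sigma^{-1})^{\widehat{\pi}}\Sigma^{1/2}$, and inversion does not preserve $\Fnorm{\cdot-I_d}$. The paper closes this gap with Lemma~\ref{lemma:frobinv}, which gives $\min(1,\Fnorm{A^{-1}-I})\le 2\min(1,\Fnorm{A-I})$ for $A\succ 0$; applied to $A=\Sigma^{1/2}(\Sigma^{-1})^{\widehat{\pi}}\Sigma^{1/2}$ this yields the target up to a factor of $2$, once the right-hand side is below $1$ (guaranteed by the hypothesis on $c_0$). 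You should invoke this lemma rather than the eigenvalue argument.

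For the lower bound in \eqref{eq:minimax_2}, your Taylor expansion works but is unnecessary: the paper simply uses the pointwise inequality $d_{\mathrm{NF},\Sigma}(\pi^\star,\widehat{\pi})\ge \Fnorm{\Sigma^{\widehat{\pi}}-\Sigma^{\pi^\star}}$ whenever $\Opnorm{\Sigma}\le 1$, which together with the scale invariance of $d_{\mathrm{NF},\Sigma}$ transfers Theorem~\ref{thm:lower_bound} verbatim.
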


Lemma \ref{thm:minimax_risks} shows that if we have the constraint on the trace instead of the operator norm, the minimax Frobenius risk in \eqref{eq:minimax_1} coincides with the minimax Frobenius risk $
    \inf_{\hS}\sup_{\Sigma} \| \hS - \Sigma\|_{\text{\rm F}}^2 $
of estimating the covariance matrix $\Sigma$ using $n$ observations. In other words, under the trace constraint, the minimax optimal estimator essentially needs to estimate the nuisance parameter $\Sigma$. This is because the hard instance of $\Sigma$ under the trace constraint has a very sparse spectrum: only $O(1)$ eigenvalues of $\Sigma$ are non-zero. This essentially reduces the dimension of the problem to $d=O(1)$, and therefore saving a $\text{poly}(d)$ factor in the minimax risk becomes vacuous. We also note that the GW estimator remains minimax rate-optimal in this scenario. 

We could circumvent the above problem by considering a different loss. By using the normalized Frobenius norm $d_{\text{NF}, \Sigma}$ in \eqref{eq:normalized_Frob}, the minimax rate in \eqref{eq:minimax_2} of Theorem \ref{thm:minimax_risks} coincides with the minimax rate in Theorem \ref{thm:upper_bound_QMLE} -- \ref{thm:lower_bound} again, even if $\Sigma$ has a sparse spectrum. Therefore, our main result in this paper is not an artifact of considering a special family of covariance matrices. 

\subsection{Comparison with orthogonal statistical learning}\label{subsec:oracle}
One particular feature of covariance alignment is that the dimension ($\asymp d^2$) of the nuisance parameter $\Sigma$ is much larger than the counterpart ($\asymp d$) of the target parameter $\pi^\star$, and the minimax rate takes a semiparametric form that interpolates between the rates for covariance estimation and permutation estimation with known covariance. This feature shares some similarities with the classical semiparametric statistics~\cite{bickel1993efficient} or more recently orthogonal statistical learning~\cite{chernozhukov2017double,foster2023orthogonal}, where a typical result is that the parametric rate can be achieved for the target parameter even if the nuisance is estimated with a slower rate. The semiparametric analysis typically relies on the Neyman orthogonality condition~\cite{neyman1959optimal} or its higher-order counterparts~\cite{mackey2018orthogonal}. Specifically, let $r_{\text{T}}$ be an achievable rate of convergence for estimating the target \emph{if the nuisance were known}, and $r_{\text{N}}$ be an achievable rate of convergence for estimating the nuisance. Then under the Neyman orthogonality condition, the rate of convergence for estimating the target under an unknown nuisance is shown \cite{foster2023orthogonal} to be $O(r_{\text{T}} + r_{\text{N}}^2)$, or even $O(r_{\text{T}} + r_{\text{N}}^4)$ if a certain strong convexity condition is met.

We illustrate our main distinction from the above line of research. First, the Neyman orthogonality condition does not hold in our problem. For example, the GW estimator uses the loss function $\ell(y; P, \Sigma) = -y^\top P \Sigma P^\top y$ in the sense that $\widehat{P}^\GW = \argmin_{P\in \text{BP}(d)} \frac{1}{n}\sum_{i=1}^n \ell(Y_i; P, \hS_X)$, while
\begin{align*}
\bE[\nabla_P \nabla_\Sigma \ell(Y; P_{\pi^\star}, \Sigma)] = \bE[ \nabla_P(-P^\top yy^\top P)|_{P = P_{\pi^\star}} ] = - \nabla_P(P^\top P_{\pi^\star}\Sigma P_{\pi^\star}^\top P)|_{P = P_{\pi^\star}} \neq 0.
\end{align*}
Second, instead of using a Taylor expansion to decouple the target estimation and nuisance estimation, we crucially make use of the specific form of the nuisance estimator when used for target estimation. This point can be rigorously established by comparing with the following oracle model. 

Under an oracle model, we still observe the sample $Y^n$; however, instead of observing the other sample $X^m$, now we have access to an oracle which outputs a covariance estimate $\widehat{\Sigma}_m$. The only property we know about $\widehat{\Sigma}_m$ is that $\|\widehat{\Sigma}_m - \Sigma\|_{\text{op}}\le \sqrt{d/m}$ almost surely. This would essentially be the case if $\widehat{\Sigma}_m$ were the empirical covariance computed from $X^m$ in the original observation model, while under the oracle model it is unknown if $\widehat{\Sigma}_m$ is unbiased and/or Wishart distributed. We note that the statistical guarantees of orthogonal statistical learning~\cite{foster2023orthogonal} only rely on such a weaker oracle model for the nuisance estimation. 

In covariance alignment, however, we prove the following theorem stating that the minimax rate of convergence for estimating $\pi^\star$ is strictly larger under the above oracle model. 

\begin{theorem}\label{thm:oracle}
For $m\ge n\ge d\log d$, under the oracle model it holds that
\begin{align*}
\inf_{\widehat{\pi}}\sup_{\pi^\star, \Sigma: \Opnorm{\Sigma}\le 1} \bE_{\Sigma} \Fnorm{\Sigma^{\widehat{\pi}} - \Sigma^{\pi^\star}}^2 \asymp \frac{d\log d}{n} + \frac{d^2}{m}. 
\end{align*}
In addition, the GW estimator (with $\hS_X$ replaced by $\widehat{\Sigma}_m$) achieves this upper bound.
\end{theorem}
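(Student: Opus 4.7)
}

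The plan is to prove matching upper and lower bounds separately: the upper bound by redoing the GW analysis of Section~\ref{sec:upper_bounds} with $\widehat{\Sigma}_m$ in place of $\hS_X$, and the lower bound by combining a Fano argument for the $d\log d/n$ rate (from the known-$\Sigma$ subproblem) with a two-point construction for the $d^2/m$ rate (exploiting the oracle's lack of distributional structure).

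For the upper bound, I would mirror Section~\ref{subsec:GW_loss_difference} almost verbatim. The optimality $\|\widehat{\Sigma}_m^{\widehat\pi}-\hS_Y\|_F^2\le \|\widehat{\Sigma}_m^{\pi^\star}-\hS_Y\|_F^2$ yields, with $\pi^\star=\id$,
\begin{align*}
 0 \ge 2\langle \hS_Y-\Sigma, \widehat{\Sigma}_m - \widehat{\Sigma}_m^{\widehat\pi}\rangle + 2\langle \Sigma, \widehat{\Sigma}_m-\widehat{\Sigma}_m^{\widehat\pi}\rangle.
\end{align*}
The first term is controlled as in event $\calE_1$ of \eqref{eq:GW_E1} by Hanson--Wright on $\hS_Y$ at rate $\sqrt{(d\log d+\log(1/\delta))/n}$, multiplied by $\|\Sigma^{1/2}(\widehat{\Sigma}_m-\widehat{\Sigma}_m^{\widehat\pi})\Sigma^{1/2}\|_F \le \|\Sigma^{\widehat\pi}-\Sigma\|_F + 2\|\widehat{\Sigma}_m-\Sigma\|_F$, where the oracle assumption gives $\|\widehat{\Sigma}_m-\Sigma\|_F\le \sqrt{d}\|\widehat{\Sigma}_m-\Sigma\|_\text{op}\le d/\sqrt m$. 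For the second term, expand
\begin{align*}
 2\langle\Sigma,\widehat{\Sigma}_m-\widehat{\Sigma}_m^{\widehat\pi}\rangle = \|\Sigma^{\widehat\pi}-\Sigma\|_F^2 + 2\langle \Sigma-\Sigma^{\widehat\pi^{-1}},\widehat{\Sigma}_m-\Sigma\rangle,
\end{align*}
and here the original Hanson--Wright step (event $\calE_2$) is replaced by the deterministic operator--nuclear norm inequality
\begin{align*}
 |\langle \Sigma-\Sigma^{\widehat\pi^{-1}},\widehat{\Sigma}_m-\Sigma\rangle| \le \|\Sigma-\Sigma^{\widehat\pi^{-1}}\|_*\cdot \|\widehat{\Sigma}_m-\Sigma\|_\text{op} \le \sqrt d \|\Sigma^{\widehat\pi}-\Sigma\|_F\cdot \sqrt{d/m}.
\end{align*}
Writing $r=\|\Sigma^{\widehat\pi}-\Sigma\|_F$, $a_1=C\sqrt{d\log d/n}$ and $a_2=d/\sqrt m$, combining gives $r^2 \le 2(a_1+a_2)r + 4a_1 a_2$, and Lemma~\ref{lemma:quad_bound} together with AM--GM yields $r^2\lesssim a_1^2+a_2^2 = d\log d/n + d^2/m$.

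For the lower bound I would handle the two additive terms separately. The $d\log d/n$ term is free: since the oracle $\widehat{\Sigma}_m=\Sigma$ is always admissible, the oracle-model minimax risk dominates the minimax risk under known $\Sigma$, and for the latter the Fano argument of Section~\ref{sec:lower_bound} with $m=\infty$ and $\eta^2\asymp\log d/(nd)$ (using Lemma~\ref{lemma:simple_upperbound} and Lemma~\ref{lemma:p_Delta}) produces the desired $d\log d/n$ rate. For the $d^2/m$ term I would use Le Cam's two-point method with a deterministic adversarial oracle. Take $M=2\sqrt{d/m}\cdot\mathrm{diag}(1/d,2/d,\ldots,1)$ and a permutation $\pi$ with $\sum_i(i-\pi(i))^2\asymp d^3$; then $M-M^{\pi^{-1}}$ is diagonal with $\|M-M^{\pi^{-1}}\|_\text{op}\le 2\sqrt{d/m}$ and $\|M-M^{\pi^{-1}}\|_F^2\asymp d^2/m$. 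Consider scenarios $(\Sigma,\pi^\star)=(M,\id)$ and $(M^{\pi^{-1}},\pi)$: both induce $Y^n\sim\calN(0,M)^{\otimes n}$, and the adversary outputs the single oracle value $\widehat{\Sigma}_m=(M+M^{\pi^{-1}})/2$ which satisfies the operator-norm constraint in both. Since the two scenarios are data-indistinguishable, any estimator $\widehat\pi$ incurs losses $\|M^{\widehat\pi}-M\|_F$ and $\|M^{\pi^{-1}\widehat\pi}-M\|_F$ whose sum is at least $\|M-M^{\pi^{-1}}\|_F$ by the triangle inequality and permutation invariance of $\|\cdot\|_F$, forcing the maximum squared loss to be $\gtrsim d^2/m$.

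The main obstacle is the $d^2/m$ lower bound construction: one must find $(M,\pi)$ saturating the operator-to-Frobenius bound $\|A\|_F\le\sqrt d\,\|A\|_\text{op}$ on $A=M-M^{\pi^{-1}}$, i.e.\ making $A$ have $\Theta(d)$ singular values of the same order. The diagonal construction above does this and dovetails naturally with the $\|\Sigma\|_\text{op}\le 1$ constraint (valid when $m\ge 4d$, guaranteed by $n\ge d\log d$ and $m\ge n$). Conceptually this is also the part of the argument that reveals why the sharper $\sqrt{d^3\log d/(mn)}$ rate of Theorem~\ref{thm:upper_bound_GW} requires the Wishart (unbiasedness and concentration) structure of $\hS_X$ rather than merely an operator-norm deviation bound.
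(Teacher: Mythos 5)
Your proposal is correct, and the upper bound plus the $d\log d/n$ part of the lower bound follow the paper's own route essentially verbatim: the paper also replaces the Hanson--Wright event $\calE_2$ by the deterministic bound $\jiao{\Sigma-\Sigma^{\widehat\pi^{-1}},\widehat\Sigma_m-\Sigma}\le \Fnorm{\Sigma-\Sigma^{\widehat\pi^{-1}}}\cdot\sqrt d\,\Opnorm{\widehat\Sigma_m-\Sigma}\le (d/\sqrt m)\,\Fnorm{\Sigma-\Sigma^{\widehat\pi^{-1}}}$ (your nuclear/operator duality gives the identical quantity), keeps $\calE_1$ and $\calE_3$ unchanged, and concludes via Lemma~\ref{lemma:quad_bound} and AM--GM; for the first lower-bound term it likewise observes that the oracle $\widehat\Sigma_m=\Sigma$ is admissible, reducing to the known-covariance Fano bound.

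Where you genuinely diverge is the $\Omega(d^2/m)$ lower bound. The paper recycles the Rademacher ensemble $\calS_0$ of Lemma~\ref{lemma:S0_construction}: it sets $\Sigma=(I_d+\eta S^{(\pi^\star)^{-1}})/2$ with $\eta=1/(2c_1\sqrt m)$, fixes the adversarial oracle to the deterministic value $\widehat\Sigma_m\equiv I_d/2$, and notes that $\Sigma^{\pi^\star}=(I_d+\eta S)/2$ has the same law for every $\pi^\star$, so $I(\pi^\star;\widehat\Sigma_m,Y^n)=0$ and the generalized Fano bound (Lemmas~\ref{lemma:Fano} and~\ref{lemma:p_Delta}) delivers $\Omega(\eta^2d^2)=\Omega(d^2/m)$. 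You instead run a two-point Le Cam argument with the diagonal matrix $M=2\sqrt{d/m}\,\mathrm{diag}(1/d,\dots,1)$, a long-displacement permutation, and the midpoint oracle $(M+M^{\pi^{-1}})/2$. Both constructions exploit the same mechanism --- the data carry zero information about $\pi^\star$ while the oracle slack $\sqrt{d/m}$ in operator norm is inflated to $\Theta(d^2/m)$ in squared Frobenius norm by spreading it over $\Theta(d)$ directions --- but yours is more elementary (no $\calS_0$, no Hanson--Wright, no Fano), at the cost of being tailored to this specific term, whereas the paper's version reuses machinery already built for Theorem~\ref{thm:lower_bound}. Two small points to tidy up: your claim that $m\ge n\ge d\log d$ forces $m\ge 4d$ only holds for $d\ge e^4$; for smaller $d$ note that $d^2/m\le (d/\log d)\cdot(d\log d/n)=O(d\log d/n)$ since $m\ge n$, so the first term already absorbs the second (the paper has the analogous caveat via $d\ge d_0$ in Lemma~\ref{lemma:S0_construction}). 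And in the Hanson--Wright step for $\calE_1$ you should state explicitly that the oracle output is independent of $Y^n$ (or condition on it), as the paper implicitly does.
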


By a simple AM-GM inequality, the above rate is no smaller than the minimax rate in Theorems \ref{thm:upper_bound_QMLE}-\ref{thm:lower_bound} under the original observation model. In particular, if $m=n$, the sample complexity for consistent estimation increases from $n=\Theta(\sqrt{d^3\log d})$ to $n=\Theta(d^2)$ in the oracle model. Theorem \ref{thm:oracle} shows that, the nuisance estimation error $\Theta(d^2/m)$ for $\|\hS_m - \Sigma \|_{\text{F}}^2$ is an unavoidable price to pay in the oracle model. Therefore, one cannot hope to fit the covariance alignment model using only the orthogonal statistical learning framework; instead, our proof in Section \ref{sec:upper_bounds} relies on the Wishart distribution of $\hS_X$ to prove a \emph{high-probability} curvature property of the empirical loss. We note that both the oracle model and our finding in Theorem \ref{thm:oracle} are similar to in spirit to \cite{balakrishnan2023fundamental} for several nonparametric functional estimation problems.

\paragraph*{Funding}
Yanjun Han was generously supported by the Norbert Wiener postdoctoral fellowship in statistics at MIT
IDSS. Philippe Rigollet is supported by NSF grants IIS-1838071, DMS-2022448, and CCF-2106377. George Stepaniants is supported through a National Science Foundation Graduate Research Fellowship under Grant No. 1745302.

\appendix

\section{Auxiliary lemmas}
The following lemma follows from the well-known Hanson--Wright inequality \cite{hanson1971bound,wright1973bound}. 
\begin{lemma}\label{lemma:Hanson-Wright}
Let $X_1,\cdots,X_n\in \mathbb{R}^d$ be i.i.d. random vectors, with independent zero-mean 1-subGaussian coordinates. Then there exists an absolute constant $C>0$ independent of $(n,d)$ such that, for every symmetric matrix $A\in \mathbb{R}^{d\times d}$ and $\delta\in (0,1)$,
\begin{align*}
    \mathbb{P}\left( \frac{1}{n}\sum_{i=1}^n X_i^\top A X_i \ge \bE[X^\top AX] - C\left(\sqrt{\frac{\log(1/\delta)}{n}}\Fnorm{A}+ \frac{\log(1/\delta)}{n}\Opnorm{A}\right) \right) \ge 1- \delta. 
\end{align*}
In particular, whenever $n\ge \log(1/\delta)$, it holds that
\begin{align*}
    \mathbb{P}\left( \frac{1}{n}\sum_{i=1}^n X_i^\top A X_i \ge \bE[X^\top AX] - 2C\sqrt{\frac{\log(1/\delta)}{n}}\Fnorm{A} \right) \ge 1- \delta. 
\end{align*}
\end{lemma}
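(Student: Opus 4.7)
The plan is to reduce the concentration statement for the sum $\frac{1}{n}\sum_{i=1}^n X_i^\top A X_i$ to a single invocation of the classical Hanson--Wright inequality by stacking all the samples into one long vector. Define $Z = (X_1^\top,\ldots,X_n^\top)^\top \in \mathbb{R}^{nd}$. Since the coordinates within each $X_i$ are independent zero-mean $1$-subGaussian, and since the $X_i$'s themselves are i.i.d., the coordinates of $Z$ are independent zero-mean $1$-subGaussian. Let $\tilde A \in \mathbb{R}^{nd \times nd}$ be the symmetric block-diagonal matrix with $n$ copies of $A/n$ on the diagonal. Then
\begin{align*}
Z^\top \tilde A Z \;=\; \frac{1}{n}\sum_{i=1}^n X_i^\top A X_i, \qquad \mathbb{E}[Z^\top \tilde A Z] \;=\; \mathbb{E}[X^\top A X].
\end{align*}

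Next, I would record the two matrix norms of $\tilde A$. Because $\tilde A$ is block-diagonal with $n$ identical blocks equal to $A/n$, its Frobenius and operator norms satisfy
\begin{align*}
\Fnorm{\tilde A} \;=\; \sqrt{n}\cdot \frac{\Fnorm{A}}{n} \;=\; \frac{\Fnorm{A}}{\sqrt{n}}, \qquad \Opnorm{\tilde A} \;=\; \frac{\Opnorm{A}}{n}.
\end{align*}
The classical Hanson--Wright inequality (see e.g.\ Rudelson--Vershynin) states that for a random vector $Z$ with independent zero-mean $1$-subGaussian coordinates, there is an absolute constant $C > 0$ such that
\begin{align*}
\mathbb{P}\Big( Z^\top \tilde A Z < \mathbb{E}[Z^\top \tilde A Z] - t \Big) \;\le\; \exp\!\left(-c\,\min\!\Big(\tfrac{t^2}{\Fnorm{\tilde A}^2},\, \tfrac{t}{\Opnorm{\tilde A}}\Big)\right).
\end{align*}
Setting the right-hand side equal to $\delta$ and solving for $t$ yields the two-term bound
\begin{align*}
t \;=\; C\!\left(\sqrt{\log(1/\delta)}\cdot \Fnorm{\tilde A} + \log(1/\delta)\cdot \Opnorm{\tilde A}\right) \;=\; C\!\left(\sqrt{\frac{\log(1/\delta)}{n}}\,\Fnorm{A} + \frac{\log(1/\delta)}{n}\,\Opnorm{A}\right),
\end{align*}
which is exactly the first stated bound after substituting the norm expressions above.

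For the second statement, I would simply argue that the linear-in-$\log(1/\delta)$ term is dominated by the square-root term whenever $n \ge \log(1/\delta)$. Indeed, in this regime $\log(1/\delta)/n \le \sqrt{\log(1/\delta)/n}$, and moreover $\Opnorm{A} \le \Fnorm{A}$, so the two terms can be absorbed into a single one at the price of a factor of $2$ in the constant. No step here is genuinely delicate: the only thing that needs a sentence of care is the verification that stacking preserves the independent subGaussian-coordinate structure (so that the hypotheses of the classical Hanson--Wright inequality are met); everything else is an algebraic identification of norms and an elementary comparison of the two tail regimes.
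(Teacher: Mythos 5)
Your proposal is correct and follows essentially the same route as the paper: concatenate the samples into a vector in $\mathbb{R}^{nd}$, form the block-diagonal matrix with $n$ copies of $A/n$, compute $\Fnorm{\tilde A}=\Fnorm{A}/\sqrt{n}$ and $\Opnorm{\tilde A}=\Opnorm{A}/n$, and invoke the Rudelson--Vershynin form of Hanson--Wright. The derivation of the second statement from the first (using $\Opnorm{A}\le\Fnorm{A}$ and $\log(1/\delta)/n\le 1$) also matches the paper's argument.
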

\begin{proof}
For the first statement, let $X^{(n)}\in \mathbb{R}^{nd}$ be a long vector via the concatenation of $X_1,\cdots,X_n$, and $A^{(n)}\in \mathbb{R}^{nd\times nd}$ be the matrix with $n$ repeated appearances of $A/n$ in the diagonal. We have
\begin{align*}
\frac{1}{n}\sum_{i=1}^n X_i^\top AX_i = (X^{(n)})^\top A^{(n)}X^{(n)},
\end{align*}
and $\Fnorm{A^{(n)}} = \Fnorm{A}/\sqrt{n}$, $\Opnorm{A^{(n)}}=\Opnorm{A}/n$. Now the first statement follows from the standard Hanson--Wright inequality (cf. \cite[Theorem 1.1]{rudelson2013hanson}). Since $\Opnorm{A}\le \Fnorm{A}$ and $\log(1/\delta)/n\le 1$, the second statement directly follows. 
\end{proof}

The following inequality appears in \cite[Page 4]{devroye2018total}, which was stated without proof. For completeness we include a proof here. 
\begin{lemma}\label{lemma:frobinv}
    If $\Sigma \in \mathbb{R}^{d \times d}$ is a positive definite matrix then
    \begin{equation}
        \frac{1}{2} \leq \frac{\min(1, \Fnorm{\Sigma^{-1} - I})}{\min(1, \Fnorm{\Sigma - I})} \leq 2.
    \end{equation}
\end{lemma}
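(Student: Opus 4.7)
By the symmetry $\Sigma \leftrightarrow \Sigma^{-1}$ (note that $\Sigma$ is positive definite iff $\Sigma^{-1}$ is, and the roles of numerator and denominator in the claimed ratio swap), it suffices to establish just one of the two inequalities, say $\min(1, \Fnorm{\Sigma^{-1} - I}) \le 2\min(1, \Fnorm{\Sigma - I})$; the other follows by applying this to $\Sigma^{-1}$. I will diagonalize $\Sigma$ so that if $\lambda_1, \dots, \lambda_d > 0$ are its eigenvalues, then
\begin{equation*}
\Fnorm{\Sigma - I}^2 = \sum_{i=1}^d (\lambda_i - 1)^2, \qquad \Fnorm{\Sigma^{-1} - I}^2 = \sum_{i=1}^d \frac{(\lambda_i - 1)^2}{\lambda_i^2}.
\end{equation*}
This reduces everything to a comparison of two sums of scalar quantities.

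The argument then splits into a short case analysis. If $\Fnorm{\Sigma - I} \ge 1$ the right-hand side equals $2$, and the left-hand side is at most $1$, so the bound is trivial. Otherwise $\Fnorm{\Sigma - I} < 1$, which forces $|\lambda_i - 1| < 1$ (hence $\lambda_i \in (0,2)$) for every $i$. Within this regime I further split depending on whether the smallest eigenvalue is at least $1/2$. If all $\lambda_i \ge 1/2$ then termwise $|1/\lambda_i - 1| = |\lambda_i - 1|/\lambda_i \le 2|\lambda_i - 1|$, which after squaring and summing yields $\Fnorm{\Sigma^{-1} - I} \le 2\Fnorm{\Sigma - I}$, and since the right-hand side is less than $2$ the truncation by $1$ is harmless. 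If instead some $\lambda_i < 1/2$, then $(\lambda_i - 1)^2 > 1/4$, so $\Fnorm{\Sigma - I} > 1/2$, making $2\min(1, \Fnorm{\Sigma - I}) > 1$, which already dominates the left-hand side.

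There is no real obstacle here: the only subtlety is recognizing that without the truncation by $1$ the inequality fails (take $\Sigma = \mathrm{diag}(\varepsilon, 1, \dots, 1)$ with $\varepsilon \downarrow 0$ and $\Fnorm{\Sigma - I}$ stays bounded while $\Fnorm{\Sigma^{-1} - I}$ blows up), so the case analysis must actively use the truncation in the regime where $\Sigma$ has a very small eigenvalue. The three-case dichotomy above is exactly the cleanest way to exploit this.
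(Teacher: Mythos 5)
Your proof is correct and takes essentially the same approach as the paper: reduce to one inequality by the $\Sigma \leftrightarrow \Sigma^{-1}$ symmetry, diagonalize, and use the termwise bound $|1/\lambda_i - 1| \le 2|\lambda_i - 1|$ when all eigenvalues are at least $1/2$, handling the remaining case via the truncation by $1$. The paper merely collapses your three cases into two by splitting at $\Fnorm{\Sigma - I} \ge 1/2$ (which simultaneously handles the trivial regime and guarantees $\lambda_i > 1/2$ otherwise), but the substance is identical.
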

\begin{proof}
We only prove the upper bound, as the lower bound is entirely symmetric. Also note that the upper bound holds trivially when $\Fnorm{\Sigma - I} \ge 1/2$, so in the sequel we assume that $\Fnorm{\Sigma - I} < 1/2$. Let $\lambda_1,\cdots,\lambda_d>0$ be the eigenvalues of $\Sigma$ (with multiplicities). Since $\Fnorm{\Sigma - I} \ge \max_{i\in [d]}|\lambda_i - 1|$, we have $\lambda_i>1/2$ for all $i\in [d]$. Therefore,
\begin{align*}
\min(1, \Fnorm{\Sigma^{-1} - I})^2 &\le \Fnorm{\Sigma^{-1} - I}^2 = \sum_{i=1}^d \left(\frac{1}{\lambda_i}-1\right)^2 =\sum_{i=1}^d \frac{(\lambda_i-1)^2}{\lambda_i^2} \\
&\le 4\sum_{i=1}^d (\lambda_i-1)^2 = 4 \Fnorm{\Sigma - I}^2 = 4\min(1, \Fnorm{\Sigma - I})^2, 
\end{align*}
which is the claimed upper bound.
\end{proof}

The following lemma is a simple quadratic inequality which will be useful in the proof. 
\begin{lemma}\label{lemma:quad_bound}
    If $x \in \bR_+$ satisfies $ax^2 \le bx + c$ for $a, b,c > 0$. Then $x\le \frac{b}{a} + \sqrt{\frac{c}{a}}$. 
\end{lemma}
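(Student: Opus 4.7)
The plan is to rewrite the hypothesis as $ax^2 - bx - c \le 0$ and bound $x$ by the positive root of the associated quadratic equation. Since $a > 0$, the quadratic $t \mapsto at^2 - bt - c$ is convex with a unique positive root
\begin{equation*}
    x_\star = \frac{b + \sqrt{b^2 + 4ac}}{2a},
\end{equation*}
so the inequality $ax^2 \le bx + c$ for $x \ge 0$ forces $x \le x_\star$.

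It then suffices to show $x_\star \le b/a + \sqrt{c/a}$. This follows from the elementary subadditivity of the square root, $\sqrt{u+v} \le \sqrt{u} + \sqrt{v}$ for $u, v \ge 0$, applied with $u = b^2$ and $v = 4ac$:
\begin{equation*}
    \sqrt{b^2 + 4ac} \le b + 2\sqrt{ac}.
\end{equation*}
Plugging into $x_\star$ gives
\begin{equation*}
    x \le x_\star \le \frac{b + b + 2\sqrt{ac}}{2a} = \frac{b}{a} + \sqrt{\frac{c}{a}},
\end{equation*}
which is the desired bound. There is no real obstacle here; the only thing worth verifying is that the reduction to $x \le x_\star$ uses $x \ge 0$ and $a > 0$, both of which are in the hypotheses.
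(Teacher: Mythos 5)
Your proof is correct and amounts to the same argument as the paper's: both reduce to comparing $x$ with the positive root of $at^2 - bt - c$, the only cosmetic difference being that the paper phrases it as a one-line contradiction (if $ax > b + \sqrt{ac}$ then $(ax-b)x - c > 0$) while you compute the root explicitly and apply $\sqrt{b^2 + 4ac} \le b + 2\sqrt{ac}$. No gaps.
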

\begin{proof}
If $ax>b + \sqrt{ac}$, then
\begin{align*}
    ax^2 - bx -c = (ax-b)x - c > \sqrt{ac}\cdot \sqrt{\frac{c}{a}} - c = 0, 
\end{align*}
a contradiction. 
\end{proof}

\section{Deferred proofs in Section \ref{sec:upper_bounds}}
\subsection{Proof of Lemma \ref{lemma:remainder}}
We first deal with the term $R_1$. By triangle inequality, 
\begin{align}\label{eq:R1_triangle}
R_1 \le \Fnorm{ \Sigma^{1/2}(\hS_X^{-1})^{\widehat{\pi}}\Sigma^{1/2} - I_d } + \Fnorm{\Sigma^{1/2}\hS_X^{-1}\Sigma^{1/2} - I_d }. 
\end{align}

Under $\calE_3$, the second term could be easily upper bounded by
\begin{align}
\Fnorm{\Sigma^{1/2}\hS_X^{-1}\Sigma^{1/2} - I_d } &\le \sqrt{d} \Opnorm{\Sigma^{1/2}\hS_X^{-1}\Sigma^{1/2} - I_d } \nonumber \\
&\stepa{\le} \sqrt{d}\cdot \frac{\Opnorm{\Sigma^{-1/2}\hS_X\Sigma^{-1/2} - I_d }}{1-\Opnorm{\Sigma^{-1/2}\hS_X\Sigma^{-1/2} - I_d }} \nonumber \\
&\stepb{\le} 2c\sqrt{d}\cdot \sqrt{\frac{d+\log(1/\delta)}{m}}. \label{eq:R1_second_term}
\end{align}
Here (a) follows from $\Opnorm{A^{-1}-I} \le \Opnorm{A-I}/(1-\Opnorm{A-I})$ as long as $\Opnorm{A-I}<1$; the last condition is fulfilled by the event $\calE_3$ in \eqref{eq:QMLE_E3}. Step (b) directly follows from the event $\calE_3$ in \eqref{eq:QMLE_E3}. 

The upper bound of the first term is slightly more involved. By triangle inequality, 
\begin{align}
   \Fnorm{ \Sigma^{1/2}(\hS_X^{-1})^{\widehat{\pi}}\Sigma^{1/2} - I_d }   &= \Fnorm{(\Sigma^{1/2}P_{\widehat{\pi}}\Sigma^{-1/2})(\Sigma^{1/2}\hS_X^{-1}\Sigma^{1/2})(\Sigma^{-1/2}P_{\widehat{\pi}}^\top \Sigma^{1/2}) - I_d } \nonumber \\
    &\leq \Fnorm{(\Sigma^{1/2}P_{\widehat{\pi}}\Sigma^{-1/2})(\Sigma^{-1/2}P_{\widehat{\pi}}^\top \Sigma^{1/2}) - I_d } \nonumber \\
    &\qquad+ \Fnorm{(\Sigma^{1/2}P_{\widehat{\pi}}\Sigma^{-1/2})(\Sigma^{1/2}\hS_X^{-1}\Sigma^{1/2}-I_d)(\Sigma^{-1/2}P_{\widehat{\pi}}^\top \Sigma^{1/2})} \nonumber \\
    &\stepc{\leq} \Fnorm{ \Sigma^{1/2}(\Sigma^{-1})^{\widehat{\pi}}\Sigma^{1/2}  - I_d } + \Opnorm{\Sigma^{1/2}P_{\widehat{\pi}}\Sigma^{-1/2}}^2 \Fnorm{\Sigma^{1/2}\hS_X^{-1}\Sigma^{1/2} - I_d} \nonumber \\
    &\stepd{=} \Fnorm{ \Sigma^{1/2}(\Sigma^{-1})^{\widehat{\pi}}\Sigma^{1/2}  - I_d } + \Fnorm{\Sigma^{1/2}\hS_X^{-1}\Sigma^{1/2} - I_d}. \label{eq:R1_first_term}
\end{align}
Here (c) uses $\Fnorm{ABC}\le \Opnorm{A}\Fnorm{BC}\le \Opnorm{A}\Fnorm{B}\Opnorm{C}$, and (d) uses the similarity between $\Sigma^{1/2}P_{\widehat{\pi}}\Sigma^{-1/2}$ and $P_{\widehat{\pi}}$ to conclude that $\Opnorm{\Sigma^{1/2}P_{\widehat{\pi}}\Sigma^{-1/2}} = 1$. 

A combination of \eqref{eq:R1_triangle}, \eqref{eq:R1_second_term}, and \eqref{eq:R1_first_term}
gives that
\begin{align*}
R_1 \le \Fnorm{ \Sigma^{1/2}(\Sigma^{-1})^{\widehat{\pi}}\Sigma^{1/2}  - I_d } + 4c\sqrt{d}\cdot \sqrt{\frac{d+\log(1/\delta)}{m}}, 
\end{align*}
which is the first statement of the lemma.  

As for $R_2$, we begin with an upper bound on the operator norm of 
\begin{align*}
\Sigma^{1/2}(\hS_X^{-1} - \Sigma^{-1} - \Sigma^{-1}(\Sigma - \hS_X)\Sigma^{-1})\Sigma^{1/2} =: A^{-1} + A - 2I_d,
\end{align*}
where $A := \Sigma^{-1/2}\hS_X \Sigma^{-1/2}$. By the definition of $\calE_3$ in \eqref{eq:QMLE_E3}, all eigenvalues of $A$ lie in the interval $[1-\eta, 1+\eta]$, with
\begin{align*}
    \eta = \min\left\{c\sqrt{\frac{d+\log(1/\delta)}{m}}, \frac{1}{2} \right\}. 
\end{align*}
It is clear that all eigenvalues of $A^{-1}+A-2I_d$ take the form $\lambda^{-1}+\lambda - 2$, with $\lambda$ being an eigenvalue of $A$. For $\lambda \in [1-\eta,1+\eta]$, we have
\begin{align*}
    \frac{1}{\lambda} + \lambda - 2 = \frac{(\lambda-1)^2}{\lambda} \le \frac{\eta^2}{1-\eta}\le 2c^2\cdot \frac{d+\log(1/\delta)}{m}. 
\end{align*}
Therefore, we have established that
\begin{align}\label{eq:R2_operatornorm}
    \Opnorm{\Sigma^{1/2}(\hS_X^{-1} - \Sigma^{-1} - \Sigma^{-1}(\Sigma - \hS_X)\Sigma^{-1})\Sigma^{1/2}} \le 2c^2\cdot \frac{d+\log(1/\delta)}{m}. 
\end{align}
Consequently, 
\begin{align*}
|R_2| &= |\jiao{ \Sigma^{-1/2}(\Sigma^{\widehat{\pi}^{-1} } - \Sigma)\Sigma^{-1/2}, \Sigma^{1/2}(\hS_X^{-1} - \Sigma^{-1} - \Sigma^{-1}(\Sigma - \hS_X)\Sigma^{-1})\Sigma^{1/2}}| \\
&\le \Fnorm{ \Sigma^{-1/2}(\Sigma^{\widehat{\pi}^{-1} } - \Sigma)\Sigma^{-1/2}}\cdot \Fnorm{\Sigma^{1/2}(\hS_X^{-1} - \Sigma^{-1} - \Sigma^{-1}(\Sigma - \hS_X)\Sigma^{-1})\Sigma^{1/2}} \\
&\le \Fnorm{ \Sigma^{-1/2}(\Sigma^{\widehat{\pi}^{-1} } - \Sigma)\Sigma^{-1/2}}\cdot \sqrt{d} \Opnorm{\Sigma^{1/2}(\hS_X^{-1} - \Sigma^{-1} - \Sigma^{-1}(\Sigma - \hS_X)\Sigma^{-1})\Sigma^{1/2}} \\
&\overset{\eqref{eq:R2_operatornorm}}{\le} 2c^2\cdot \frac{\sqrt{d}(d+\log(1/\delta))}{m}\Fnorm{\Sigma^{-1/2}\Sigma^{\widehat{\pi}^{-1} }\Sigma^{-1/2} - I_d}. 
\end{align*}
This completes the proof of the upper bound on $|R_2|$. \qed

\subsection{Proof of Lemma \ref{lemma:L1_L2}}
Let $S\triangleq \sum_{i=1}^d(x_i-1)$. By AM-GM inequality we have $S\ge 0$.

We first prove an upper bound on $\max_i x_i$ in terms of $S$. For each $i\in [d]$, it holds that
\begin{align*}
    S + d = x_i + \sum_{j\neq i} x_j \ge x_i + (d-1)\left(\prod_{j\neq i} x_j\right)^{\frac{1}{d-1}} = x_i + \frac{d-1}{x_i^{1/(d-1)}}. 
\end{align*}
As $(1/x_i)^{1/(d-1)} = e^{-\log(x_i)/(d-1)} \ge 1 - \log(x_i)/(d-1)$, the above inequality gives
\begin{align*}
    S + 1 \ge x_i - \log x_i.
\end{align*}
As the function $x\mapsto h(x)\triangleq x-\log x$ is increasing on $[1,\infty)$, and $h(2S+2) = 2S + 2 - \log(2) - \log(S+1) \ge 2S + 2 - 1 - S = S+1$, we conclude that $x_i\le 2(S+1)$. This inequality holds for all $i\in [d]$, so $\max_{i} x_i \le 2(S+1)$. 

Next we introduce the following lemma, the proof of which is deferred to the end of this section. 

\begin{lemma}\label{lemma:logbound}
    For $x \in (-1, r]$ we have that
    \begin{align*}
        \log(1 + x) \leq x - \frac{x^2}{2(r+1)}.
    \end{align*}
\end{lemma}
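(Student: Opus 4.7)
}
The inequality is a one-variable calculus fact, so I would attack it by reducing everything to the monotonicity of a single auxiliary function. Define
\begin{equation*}
f(x) := x - \frac{x^2}{2(r+1)} - \log(1+x), \qquad x \in (-1, r],
\end{equation*}
and observe that $f(0) = 0$, so it suffices to show that $x = 0$ is a global minimum of $f$ on $(-1, r]$. The plan is to compute $f'(x)$, exhibit a clean sign factorization, and conclude by elementary monotonicity.

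A direct differentiation gives
\begin{equation*}
f'(x) = 1 - \frac{x}{r+1} - \frac{1}{1+x} = \frac{(r+1)(1+x) - x(1+x) - (r+1)}{(r+1)(1+x)} = \frac{x(r-x)}{(r+1)(1+x)},
\end{equation*}
where in the last step the numerator simplifies after expanding $(r+1)(1+x) - (r+1) = x(r+1)$ and subtracting $x(1+x)$. Since the denominator $(r+1)(1+x)$ is strictly positive on $(-1, r]$ (as $r > -1$ is implicit from the domain containing $x=r$), the sign of $f'(x)$ is that of $x(r-x)$. Thus $f'(x) \le 0$ for $x \in (-1, 0]$ and $f'(x) \ge 0$ for $x \in [0, r]$, so $f$ attains its minimum on $(-1, r]$ at $x = 0$. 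Combined with $f(0) = 0$, this gives $f(x) \ge 0$ throughout $(-1, r]$, which is the desired inequality.

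An equivalent route, should the derivative calculation feel opaque, is to write $x - \log(1+x) = \int_0^x \tfrac{t}{1+t}\,dt$ and compare pointwise: for any $t$ lying between $0$ and $x \in (-1, r]$, we have $|t|/(1+t) \ge |t|/(1+r)$ because $1+t \le 1+r$ on this interval. Integrating yields $x - \log(1+x) \ge \tfrac{x^2}{2(1+r)}$, which is the claim. There is no substantive obstacle here; the only thing that requires a moment of care is making sure both the $x > 0$ and $x < 0$ cases are covered, which the factorization $x(r-x)$ in the derivative (or, in the integral form, the use of $|t|$) handles uniformly.
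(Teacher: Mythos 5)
Your proof is correct and is essentially identical to the paper's: the same auxiliary function $f(x) = x - \tfrac{x^2}{2(r+1)} - \log(1+x)$, the same factorization $f'(x) = \tfrac{x(r-x)}{(r+1)(1+x)}$, and the same monotonicity conclusion from $f(0)=0$. The integral variant you mention is a pleasant alternative but not needed.
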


Applying Lemma \ref{lemma:logbound} with $r=2S+1$ gives  
\begin{align*}
0 = \sum_{i=1}^d \log(x_i) \le \sum_{i=1}^d\left( (x_i-1) - \frac{(x_i-1)^2}{4(S+1)}\right) = S - \frac{1}{4(S+1)}\sum_{i=1}^d (x_i-1)^2, 
\end{align*}
which implies that
\begin{align*}
\sum_{i=1}^d (x_i-1)^2 \le 4S(S+1),
\end{align*}
as claimed. \qed 

\begin{proof}[Proof of Lemma \ref{lemma:logbound}]
Define
    \begin{align*}
        f(x) = x - \frac{x^2}{2(r+1)} - \log(1 + x). 
    \end{align*}
We have
    \begin{align*}
        f'(x) = 1 - \frac{x}{r+1} - \frac{1}{x+1} = \frac{x(r-x)}{(x+1)(r+1)}, 
    \end{align*}
which satisfies $f'(x)\ge 0$ for $x\in [0,r]$ and $f'(x)\le 0$ for $x\in (-1,0]$. Hence, $f(x)\ge f(0)=0$ for all $x\in (-1,r]$, as claimed. 
\end{proof}

\section{Deferred proofs in Section \ref{sec:lower_bound}}
\subsection{Proof of Lemma \ref{lemma:S0_construction}}
By the probabilistic argument, it suffices to show that by drawing a random matrix $S\sim \text{Unif}(\calS)$, the probability that $S$ satisfies all three properties is at least $1/2$. The first property holds trivially. For the second property, it was shown in \cite[Corollary 4.4.8]{vershynin2018high} that
\begin{align*}
\mathbb{P}(\Opnorm{S} \ge c_1\sqrt{d}) \le 4\exp(-c_1d)
\end{align*}
holds for some absolute constant $c_1>0$ independent of $d$. By choosing $d_0>0$ large enough we may ensure that $\mathbb{P}(\Opnorm{S} \ge c_1\sqrt{d})\le 1/4$ for all $d\ge d_0$.  

For the last property, note that for fixed $\pi_1, \pi_2\in S_d$, we have
\begin{align*}
\Fnorm{S^{\pi_1} - S^{\pi_2}}^2 = \Fnorm{P_{\pi_1}S P_{\pi_1}^\top - P_{\pi_2}S P_{\pi_2}^\top}^2 = \Fnorm{P_{\pi_2^{-1}}P_{\pi_1}S P_{\pi_1}^\top P_{\pi_2^{-1}}^\top - S }^2 = \Fnorm{P_{\pi_1\circ \pi_2^{-1}}S P_{\pi_1\circ \pi_2^{-1}}^\top - S }^2, 
\end{align*}
where we have used that $P_{\pi}^{-1} = P_{\pi^{-1}}$ and $P_{\pi_1}P_{\pi_2}=P_{\pi_2\circ \pi_1}$. Let $\pi = \pi_1\circ \pi_2^{-1}\in S_d$. Note that
\begin{align*}
\Fnorm{S^\pi - S}^2 = \sum_{i,j=1}^d (S_{\pi(i),\pi(j)}-S_{i,j})^2 =: X^\top AX, 
\end{align*}
where $X$ is the $\binom{d+1}{2}$-dimensional column vector $(S_{i,j})_{1\le i\le j\le d}$, and the matrix $A$ is given as follows: for $i\le j$ and $k\le \ell$, 
\begin{align*}
A_{(i,j),(k,\ell)} = \begin{cases}
    2\cdot \mathbbm{1}(i=k) - \mathbbm{1}(\pi(i)= k) - \mathbbm{1}(\pi(k)=i), &\text{if } i=j, k=\ell; \\
    0, & \text{if } i=j, k\neq \ell;\\
    4[1-\mathbbm{1}((\pi(i),\pi(j)) = (i,j) \text{ or }(j,i))], & \text{if } i\neq j, (k,\ell) = (i,j); \\
    -2[\mathbbm{1}((\pi(i),\pi(j)) = (k,\ell) \text{ or }(\ell,k)) \\ 
    \qquad + \mathbbm{1}((\pi(k),\pi(\ell)) = (i,j) \text{ or }(j,i))], & \text{if } i\neq j, (k,\ell)\neq (i,j).
\end{cases}
\end{align*}
Clearly $A$ is symmetric, and
\begin{align*}
    &\max_{1\le i\le j\le d}\sum_{1\le k\le \ell \le d} |A_{(i,j),(k,\ell)}| \\
    &\le   \max_{1\le i\le j\le d} \sum_{1\le k\le \ell \le d}4\cdot \mathbbm{1}(  \{k,\ell\} = \{i,j\}, \{\pi(i),\pi(j)\} \text{ or } \{\pi^{-1}(i), \pi^{-1}(j) \} ) \\
    &\le 12. 
\end{align*}
As a result, we have
\begin{align}
\Opnorm{A} &\le \max_{1\le i\le j\le d}\sum_{1\le k\le \ell \le d} |A_{(i,j),(k,\ell)}| \le 12, \label{eq:A_opnorm} \\
\Fnorm{A}^2 &\le 4 \sum_{1\le i\le j\le d}\sum_{1\le k\le \ell \le d} |A_{(i,j),(k,\ell)}| \le 48\binom{d+1}{2} = 24d(d+1). \label{eq:A_Fnorm}
\end{align}
In addition, whenever $\sum_{i=1}^d \mathbbm{1}(\pi(i)\neq i) = \sum_{i=1}^d \mathbbm{1}(\pi_1(i)\neq \pi_2(i))\ge d/10$, it holds that
\begin{align}\label{eq:A_trace}
\trace(A) = \sum_{1\le i\le j\le d} A_{(i,j),(i,j)} &= 2\sum_{i,j=1}^d \mathbbm{1}((\pi(i),\pi(j))\neq (i,j), (j,i)) \nonumber\\
&\ge 2\sum_{i,j=1}^d \mathbbm{1}(\pi(i) \neq i )\mathbbm{1}(\pi(i) \neq j)\nonumber \\
&= 2(d-1)\sum_{i=1}^d \mathbbm{1}(\pi(i)\neq i) \ge \frac{d^2}{10}. 
\end{align}

With the help of \eqref{eq:A_opnorm}-\eqref{eq:A_trace}, we are ready to prove a high-probability lower bound of $\Fnorm{S^{\pi_1} - S^{\pi_2}}^2 = X^\top AX$. Applying the Hanson--Wright inequality (cf. Lemma \ref{lemma:Hanson-Wright}) with $n=1$ to $X^\top AX$, with probability exceeding $1-1/[4(d!)^2]$ it holds that
\begin{align*}
\Fnorm{S^{\pi_1} - S^{\pi_2}}^2 = X^\top AX &\ge \bE[X^\top AX] - C\left(\sqrt{d\log d}\Fnorm{A} + (d\log d)\Opnorm{A}\right) \\
&= \trace(A) - C\left(\sqrt{d\log d}\Fnorm{A} + (d\log d)\Opnorm{A}\right) \\
&\ge \frac{d^2}{10} - O\left(\sqrt{d\log d}\cdot d + d\log d\right) \ge c_2d^2
\end{align*}
as long as $d\ge d_0$ for a sufficiently large $d_0$ and a sufficiently small $c_2>0$. Now applying the union bound over at most $(d!)^2$ choices of $(\pi_1, \pi_2)$, we conclude that $S$ satisfies the third property with probability at least $3/4$. 

Finally, by a union bound again, with probability at least $1/2$, the random matrix $S$ satisfies all three properties. This completes the proof of the lemma. \qed 

\subsection{Proof of Lemma \ref{lemma:Fano}}\label{subsec:Fano_proof}
Fix any estimator $T$. Let $\mathbb{P}$ be the joint distribution of $(\theta,X)\sim \pi(\theta)P_\theta(x)$ under the Bayes setup, and $\mathbb{Q}$ be another joint distribution of $(\theta,X)\sim \pi(\theta)Q(x)$, where $\theta$ and $X$ are independent. Here $Q$ is an arbitrary probability distribution over $\calX$. Consider a map $\Phi: \Theta\times \calX \to \{0,1\}$ defined as $\Phi(\theta,x)=\mathbbm{1}(L(\theta,T(X)) \le \Delta)$, the data-processing inequality gives that
\begin{align*}
    D_{\text{KL}}(\mathbb{P}\|\mathbb{Q}) &\ge D_{\text{KL}}(\Phi_\# \mathbb{P} \| \Phi_\# \mathbb{Q}) \\
    &= \mathbb{P}(L(\theta,T(X)) \le \Delta)\log \frac{\mathbb{P}(L(\theta,T(X)) \le \Delta)}{\mathbb{Q}(L(\theta,T(X)) \le \Delta)} \\
    &\qquad + (1-\mathbb{P}(L(\theta,T(X))) \le \Delta)\log \frac{1-\mathbb{P}(L(\theta,T(X)) \le \Delta)}{1-\mathbb{Q}(L(\theta,T(X)) \le \Delta)} \\
    &\ge \mathbb{P}(L(\theta,T(X)) \le \Delta)\log \frac{1}{\mathbb{Q}(L(\theta,T(X)) \le \Delta)} -\log 2, 
\end{align*}
where the last step is due to $x\log x+(1-x)\log(1-x)\ge -\log 2$ for all $x\in [0,1]$. Since
\begin{align*}
\mathbb{Q}(L(\theta,T(X)) \le \Delta) \le \sup_{a} \pi\{ \theta\in \Theta: L(\theta,a)\le \Delta\} = p_\Delta,
\end{align*}
the above inequality rearranges to
\begin{align*}
\bE_\mathbb{P}[L(\theta,T(X)] &\ge \Delta\cdot \mathbb{P}(L(\theta,T(X)) > \Delta) \\
&\ge \Delta\left(1 - \frac{D_{\text{KL}}(\mathbb{P}\|\mathbb{Q}) + \log 2}{\log(1/p_\Delta)}\right). 
\end{align*}
Finally, since the above inequality holds for all $Q$, using the variational representation of the mutual information $I(\theta;X) = \min_Q D_{\text{KL}}(\pi(\theta)P_\theta(x) \| \pi(\theta)Q(x))$ completes the proof. \qed 

\subsection{Proof of Lemma \ref{lemma:p_Delta} }
By the third property of $\calS_0$, we have the inclusion
\begin{align*}
\left\{ \pi^\star: L(\pi^\star, \widehat{\pi}) \le \frac{c_2\eta^2d^2}{4} \right\}  = \left\{ \pi^\star: \Fnorm{S^{\pi^\star} - S^{\widehat{\pi}}}^2 \le c_2d^2 \right\} \subseteq  \left\{ \pi^\star: \sum_{i=1}^d \mathbbm{1}(\pi^\star(i)\neq \widehat{\pi}(i)) \le \frac{d}{10} \right\}. 
\end{align*}
As $\pi^\star\sim \text{Unif}(S_d)$, for any fixed $\widehat{\pi}$, the final probability is at most
\begin{align*}
    \frac{1}{d!} \sum_{k=0}^{\lfloor d/10 \rfloor} \binom{d}{k} k! = \sum_{k=0}^{\lfloor d/10 \rfloor}
\frac{1}{(d-k)!} \le \left(\frac{d}{10}+1\right)\frac{1}{\lceil 9d/10 \rceil ! } \le \exp(-c_3d\log d)
\end{align*}
for some absolute constant $c_3>0$. This shows that $p_\Delta \le d^{-c_3d}$. \qed 

\subsection{Proof of Lemma \ref{lemma:simple_upperbound}}
The following upper bound holds for $I(\pi^\star; X^m, Y^n)$:
\begin{align*}
I(\pi^\star; X^m, Y^n) &= I(\pi^\star; X^m) + I(\pi^\star; Y^n\mid X^m) \\
&\stepa{=} I(\pi^\star; Y^n\mid X^m) \\
&\stepb{\le} \bE_{X^m, \pi^\star}[D_{\text{KL}}( P_{Y^n\mid \pi^\star, X^m} \| \calN(0,I_d/2)^{\otimes n} ) ] \\
&\stepc{\le} \bE_{\Sigma,\pi^\star}[D_{\text{KL}}( P_{Y^n\mid \pi^\star, \Sigma} \| \calN(0,I_d/2)^{\otimes n} ) ] \\
&\stepd{=} \frac{n}{2}\bE_{S}[\eta \trace(S) - \log\det(I_d + \eta S)]. 
\end{align*}
Here (a) is due to the independence between $\pi^\star$ and $X^m$, (b) follows from the variational representation of the mutual information, (c) follows from the convexity of the KL divergence and that $P_{Y^n\mid \pi^\star,X^m} = \bE_{\Sigma\mid X^m} [P_{Y^n\mid \pi^\star,\Sigma}]$, (d) is the KL divergence between two Gaussians as well as the simple facts $\trace(S)=\trace(S^{\pi^\star}), \det(I_d+\eta S) = \det(I_d+\eta S^{\pi^\star})$. To proceed, let $\lambda_1,\cdots,\lambda_d$ be the eigenvalues of $S$, so that
\begin{align*}
\eta \trace(S) - \log\det(I_d + \eta S) &= \sum_{i=1}^d \left( \eta \lambda_i - \log(1+\eta \lambda_i) \right) \\
&\stepe{\le} \sum_{i=1}^d \eta^2\lambda_i^2 = \eta^2 \Fnorm{S}^2 = \eta^2 d^2. 
\end{align*}
Here (e) follows from $x-\log(1+x)\le x^2$ whenever $|x|\le 1/2$, and we recall that $\eta \le 1/(2c_1\sqrt{d})$ from \eqref{eq:eta_range}, and that $|\lambda_i|\le c_1\sqrt{d}$ almost surely from the second property of $\calS_0$. A combination of the above inequalities gives that $I(\pi^\star; X^m, Y^n)\le n\eta^2 d^2/2$, as desired. \qed 

\subsection{Proof of Lemma \ref{lemma:superadditivity}}
For the first statement, note that
\begin{align*}
    a_{n+1} - a_n &= D_{\text{KL}}(P_{Z^{n+1}}\| Q^{\otimes (n+1)}) - D_{\text{KL}}(P_{Z^{n}}\| Q^{\otimes n}) \\
    &\stepa{=} \bE_{P_{Z^n}}[D_{\text{KL}}(P_{Z_{n+1}\mid Z^n} \| Q)] \\
    &\stepb{\ge} \bE_{P_{Z_2^n}}[D_{\text{KL}}(\bE_{P_{Z_1\mid Z_2^n}}[P_{Z_{n+1}\mid Z^n}] \| Q)] \\
    &= \bE_{P_{Z_2^n}}[D_{\text{KL}}(P_{Z_{n+1}\mid Z_2^n} \| Q)] \\
    &\stepc{=} \bE_{P_{Z^{n-1} }}[D_{\text{KL}}(P_{Z_{n}\mid Z^{n-1}} \| Q)] \\
    &= a_n - a_{n-1}, 
\end{align*}
where (a) is crucially thanks to the product structure on $Q$ and could be verified by simple algebra, (b) follows from the convexity of the KL divergence, and (c) is due to the stationarity of $(Z_1,Z_2,\cdots)$. The first statement is therefore proved. The second statement directly follows from
\begin{align*}
    \frac{a_m}{m} = \frac{\sum_{k=1}^m (a_k-a_{k-1})}{m} \ge  \frac{\sum_{k=1}^n (a_k-a_{k-1})}{n} = \frac{a_n}{n}.  
\end{align*}\qed

\subsection{Proof of Lemma \ref{lemma:second_moment}}\label{subsec:second_moment_proof}
As a shorthand we denote the centered Gaussian distribution $\calN(0,\Sigma_S)$ by $\mathbb{P}_S$, and $\calN(0,I_d)$ by $\mathbb{Q}$. The standard second moment computation gives
\begin{align*}
\begin{aligned}
    \chi^2(\bE_{S}[\calN(0,\Sigma_S)^{\otimes p}] \| \calN(0, I_d)^{\otimes p}) + 1 &= \mathbb{E}_{\mathbb{Q}^{\otimes p}}\Big[\Big(\frac{d\bE_{S}[\mathbb{P}_S^{\otimes p}]}{d\mathbb{Q}^{\otimes p}}\Big)^2\Big] \\
    &= \bE_{S, T}\Bigg[\mathbb{E}_{\mathbb{Q}^{\otimes p}}\Big[\frac{d\mathbb{P}_S^{\otimes p}}{d\mathbb{Q}^{\otimes p}}\frac{d\mathbb{P}_T^{\otimes p}}{d\mathbb{Q}^{\otimes p}}\Big]\Bigg] \\
    &= \bE_{S, T}\Big[\Big(\mathbb{E}_{\mathbb{Q}}\Big[\frac{d\mathbb{P}_S}{d\mathbb{Q}}\frac{d\mathbb{P}_T}{d\mathbb{Q}}\Big]\Big)^p\Big] .
\end{aligned}
\end{align*}

By Cauchy-Schwarz, 
\begin{align*}
\mathbb{E}_{\mathbb{Q}}\Big[\frac{d\mathbb{P}_S}{d\mathbb{Q}}\frac{d\mathbb{P}_T}{d\mathbb{Q}}\Big] &\le \left(\mathbb{E}_{\mathbb{Q}}\Big[\Big(\frac{d\mathbb{P}_S}{d\mathbb{Q}}\Big)^2\Big]\right)^{\frac{1}{2}} \left(\mathbb{E}_{\mathbb{Q}}\Big[\Big(\frac{d\mathbb{P}_T}{d\mathbb{Q}}\Big)^2\Big]\right)^{\frac{1}{2}}  =(\chi^2(\mathbb{P}_S\| \mathbb{Q})+1)^{1/2} (\chi^2(\mathbb{P}_T\| \mathbb{Q})+1)^{1/2}, 
\end{align*}
and 
\begin{align*}
\chi^2(\mathbb{P}_S\| \mathbb{Q})+1 = \frac{\det(2\Sigma_S^{-1} - I_d)}{\det(\Sigma_S)} < \infty
\end{align*}
holds as long as $\Sigma_S\succ 0$ and $2\Sigma_S^{-1} - I_d \succ 0$. This is ensured by our assumption $0\prec \Sigma_S \prec 2I_d$, so the above expression is finite. Now note that
\begin{align*}
    \mathbb{E}_{\mathbb{Q}}\Big[\frac{d\mathbb{P}_S}{d\mathbb{Q}}\frac{d\mathbb{P}_T}{d\mathbb{Q}}\Big] &= \frac{1}{\sqrt{\det(\Sigma_S)\det(\Sigma_T)}}\mathbb{E}_{X\sim \mathbb{Q}}\Big[\exp\Big(-\frac{1}{2}X^\top\Big(\Sigma_S^{-1} + \Sigma_T^{-1} - 2I_d\Big)X\Big)\Big]\\
    &= \frac{1}{\sqrt{\det(\Sigma_S)\det(\Sigma_T)}}\mathbb{E}_{Z\sim \mathbb{Q}}\Big[\exp\Big(Z^\top AZ\Big)\Big]
\end{align*}
where $A := I_d - (\Sigma_S^{-1} + \Sigma_T^{-1})/2$. Let $\lambda_1,\cdots,\lambda_d$ be the eigenvalues of $A$. Since $0\prec \Sigma_S, \Sigma_T \prec 2I_d$, we have $\lambda_k<1/2$ for all $k\in [d]$. Therefore, by diagonalization, 
\begin{align*}
    \mathbb{E}_{Z\sim \mathbb{Q}}\Big[\exp\Big(Z^\top AZ\Big)\Big] = \prod_{k=1}^d\bE_{Z_k\sim \cN(0,1)}\Big[\exp\Big(\lambda_kZ_k^2\Big)\Big] = \prod_{k=1}^d \frac{1}{(1-2\lambda_k)^{1/2}}. 
\end{align*}

Hence, we can write
\begin{align*}
    \mathbb{E}_{\mathbb{Q}}\Big[\frac{d\mathbb{P}_S}{d\mathbb{Q}}\frac{d\mathbb{P}_T}{d\mathbb{Q}}\Big]  &= \frac{\det(I_d - 2A)^{-\frac{1}{2}}}{\sqrt{\det(\Sigma_S)\det(\Sigma_T)}} = \frac{\det(\Sigma_S^{-1} + \Sigma_T^{-1} - I_d)^{-\frac{1}{2}}}{\sqrt{\det(\Sigma_S)\det(\Sigma_T)}}\\
    &= \Big( \det(\Sigma_S)\det(\Sigma_S^{-1} + \Sigma_T^{-1} - I_d)\det(\Sigma_T) \Big)^{-1/2}\\
    &= \det(\Sigma_S + \Sigma_T - \Sigma_S  \Sigma_T)^{-1/2} \\
    &= \det( I_d - (\Sigma_S - I_d)(\Sigma_T - I_d) ) ^{-1/2}. 
\end{align*}
Finally, this proves that
\begin{align*}
    \chi^2(\bE_{S}[\calN(0,\Sigma_S)^{\otimes p}] \| \calN(0, I_d)^{\otimes p}) + 1 = \bE_{S, T}\Big[\det\Big(I_d - (\Sigma_S - I_d)(\Sigma_T - I_d)\Big)^{-\frac{p}{2}}\Big], 
\end{align*}
which is the desired result.\qed

\section{Deferred proofs in Section \ref{sec:discussion}}
\subsection{Proof of Theorem \ref{thm:minimax_risks}}
We establish \eqref{eq:minimax_1} and \eqref{eq:minimax_2} separately. 
\subsubsection{Proof of \eqref{eq:minimax_1}}
For the upper bound, we first note that
\begin{align*}
\bE\left[\Fnorm{\hS_X  - \Sigma}^2\right] &= \sum_{i,j=1}^d \bE[ ((\hS_X)_{i,j} - \Sigma_{i,j})^2 ] \\
&= \frac{1}{m}\sum_{i,j=1}^d \bE[ (X_{1,i}X_{1,j} - \Sigma_{i,j})^2 ] \\
&\stepa{=} \frac{1}{m}\sum_{i,j=1}^d \left( \bE[X_{1,i}^2 X_{1,j}^2] - \Sigma_{i,j}^2 \right) \\
&\stepb{=} \frac{1}{m}\sum_{i,j=1}^d \left( \bE[X_{1,i}^2] \bE[X_{1,j}^2] + 2 \bE[X_{1,i}X_{1,j}]^2 - \Sigma_{i,j}^2 \right) \\
&= \frac{1}{m}\sum_{i,j=1}^d \left( \Sigma_{i,i}\Sigma_{j,j} + \Sigma_{i,j}^2 \right) \\
&\stepc{\le} \frac{2}{m} \sum_{i,j=1}^d \Sigma_{i,i}\Sigma_{j,j} \\
&= \frac{2\trace(\Sigma)^2}{m} \le \frac{2R^2}{m}. 
\end{align*}
Here (a) is due to the unbiasedness $\bE[X_{1,i}X_{1,j}]=\Sigma_{i,j}$, (b) follows from Isserlis' theorem \cite{isserlis1918formula}, and (c) uses $\Sigma_{i,j}^2 \le \Sigma_{i,i}\Sigma_{j,j}$ for any PSD matrix $\Sigma$. Similarly, we have
\begin{align*}
\bE\left[\Fnorm{\hS_Y  - \Sigma^{\pi^\star}}^2\right] \le \frac{2R^2}{n}. 
\end{align*}

For the estimator $\widehat{\pi}$, we simply choose $\widehat{\pi} = \widehat{\pi}^\GW$ to be the GW estimator defined in \eqref{eq:GW_estimator}. Then
\begin{align*}
\Fnorm{\Sigma^{\widehat{\pi}} - \Sigma^{\pi^\star} } &\le \Fnorm{\Sigma^{\widehat{\pi}} - \hS_X^{\widehat{\pi}} } + \Fnorm{\hS_X^{\widehat{\pi}} - \hS_Y } + \Fnorm{\hS_Y - \Sigma^{\pi^\star}} \\
&\stepd{\le} \Fnorm{\Sigma^{\widehat{\pi}} - \hS_X^{\widehat{\pi}} } + \Fnorm{\hS_X^{\pi^\star} - \hS_Y } + \Fnorm{\hS_Y - \Sigma^{\pi^\star}} \\
&\le \Fnorm{\Sigma^{\widehat{\pi}} - \hS_X^{\widehat{\pi}} } + \Fnorm{\hS_X^{\pi^\star} - \Sigma^{\pi^\star} } + 2 \Fnorm{\hS_Y - \Sigma^{\pi^\star}} \\
&= 2\Fnorm{\hS_X  - \Sigma} + 2\Fnorm{\hS_Y  - \Sigma^{\pi^\star}}, 
\end{align*}
where (d) follows from the defining property of the GW estimator in \eqref{eq:GW_estimator}. Combining the above displays shows the upper bound
\begin{align*}
\bE\left[\Fnorm{\Sigma^{\widehat{\pi}} - \Sigma^{\pi^\star} }^2\right] = O\left(\frac{R^2}{n}\right). 
\end{align*}

For the lower bound in \eqref{eq:minimax_1}, by restricting the support of $\Sigma$ within its upper left $2\times 2$ corner, it is clear that the minimax risk is no smaller than its counterpart with $d=2$. In the sequel we assume that $d=2$ and lower bound the minimax risk as
\begin{align*}
\inf_{\widehat{\pi}}\sup_{\pi^\star\in S_d, \Sigma\in \Sigma(R)} \bE_{(\pi^\star,\Sigma)}\Fnorm{\Sigma^{\widehat{\pi}} - \Sigma^{\pi^\star}}^2 &\ge \inf_{\widehat{\pi}}\sup_{(\pi^\star, \Sigma): \Opnorm{\Sigma}\le R/2 } \bE_{(\pi^\star,\Sigma)}\Fnorm{\Sigma^{\widehat{\pi}} - \Sigma^{\pi^\star}}^2 \\
&\stepe{=} \frac{R^2}{4}\inf_{\widehat{\pi}}\sup_{(\pi^\star, \Sigma): \Opnorm{\Sigma}\le 1 } \bE_{(\pi^\star,\Sigma)}\Fnorm{\Sigma^{\widehat{\pi}} - \Sigma^{\pi^\star}}^2  \\
&\stepf{=} \Omega\left(\frac{R^2}{n}\right), 
\end{align*}
where (e) follows from simple scaling, and (f) is the standard $\Omega(1/n)$ parametric rate (alternatively, this also follows from the $\Omega(d\log d/n)$ lower bound established in Section \ref{sec:lower_bound} applied to $d=2$). This completes the proof of \eqref{eq:minimax_1}. 

\subsubsection{Proof of \eqref{eq:minimax_2}}
For the upper bound, note that the analysis of $\widehat{\pi} = \widehat{\pi}^\QMLE$ in Section \ref{sec:QMLE} holds for any $\Sigma\succeq 0$ until \eqref{eq:QMLE_trace_upperbound}. Assuming $\pi^\star=\id$, integrating the tail over $\delta\in (0,1)$ in \eqref{eq:QMLE_trace_upperbound} gives that
\begin{align*}
\bE[\jiao{ \Sigma^{\widehat{\pi}^{-1}} - \Sigma, \Sigma^{-1} }] = O\left(\frac{d\log d}{n} + \sqrt{\frac{d^3 \log d}{mn}} \right).
\end{align*}
By the second inequality in Corollary \ref{cor:trace_frob}, we have
\begin{align}\label{eq:normalized_Frob_upper}
\Fnorm{\Sigma^{1/2}(\Sigma^{-1})^{\widehat{\pi}}\Sigma^{1/2} - I_d }^2 \le C\left(\frac{d\log d}{n} + \sqrt{\frac{d^3 \log d}{mn}} \right), 
\end{align}
with $C>0$ being an absolute constant independent of $(n,m,d)$. Since $n\ge c_0d\log d$ and $mn\ge c_0d^3\log d$, for a large enough numerical constant $c_0>0$ depending only on $C$, the RHS of \eqref{eq:normalized_Frob_upper} will be smaller than $1/2$. By Lemma \ref{lemma:frobinv}, 
\begin{align*}
\min\left\{1, \Fnorm{\Sigma^{-1/2}\Sigma^{\widehat{\pi}}\Sigma^{-1/2} - I_d }^2 \right\} &\le 2 \min\left\{1, \Fnorm{\Sigma^{1/2}(\Sigma^{-1})^{\widehat{\pi}}\Sigma^{1/2} - I_d }^2 \right\} \\
&\le 2C\left(\frac{d\log d}{n} + \sqrt{\frac{d^3 \log d}{mn}} \right), 
\end{align*}
where the final quantity is smaller than $1$. This gives
\begin{align*}
\Fnorm{\Sigma^{-1/2}(\Sigma^{\widehat{\pi}} - \Sigma)\Sigma^{-1/2}}^2 = O\left(\frac{d\log d}{n} + \sqrt{\frac{d^3 \log d}{mn}} \right), 
\end{align*}
which is the desired upper bound. 

The minimax lower bound is an easy consequence of Theorem \ref{thm:lower_bound}: 
\begin{align*}
\inf_{\widehat{\pi}}\sup_{\pi^\star\in S_d, \Sigma\in \Sigma(R)} \bE_{(\pi^\star,\Sigma)}[d_{\text{\rm NF}, \Sigma}(\pi^\star, \widehat{\pi})^2] &\ge \inf_{\widehat{\pi}}\sup_{(\pi^\star,\Sigma): \Opnorm{\Sigma}\le R/d} \bE_{(\pi^\star,\Sigma)}[d_{\text{\rm NF}, \Sigma}(\pi^\star, \widehat{\pi})^2] \\
&\stepa{=} \inf_{\widehat{\pi}}\sup_{(\pi^\star,\Sigma): \Opnorm{\Sigma}\le 1} \bE_{(\pi^\star,\Sigma)}[d_{\text{\rm NF}, \Sigma}(\pi^\star, \widehat{\pi})^2]  \\
&\stepb{\ge} \inf_{\widehat{\pi}}\sup_{(\pi^\star,\Sigma): \Opnorm{\Sigma}\le 1} \bE_{(\pi^\star,\Sigma)}\left[\Fnorm{\Sigma^{\widehat{\pi}} - \Sigma^{\pi^\star}}^2\right] \\
&\stepc{=} \Omega\left(\frac{d\log d}{n} + \sqrt{\frac{d^3\log d}{mn}}\right), 
\end{align*}
where (a) is the scale-invariance of the loss $d_{\text{\rm NF}, \Sigma}$, i.e. $d_{\text{\rm NF}, \Sigma}(\pi^\star, \widehat{\pi}) = d_{\text{\rm NF}, \lambda\Sigma}(\pi^\star, \widehat{\pi})$ for every $\lambda>0$; (b) uses that $\Fnorm{\Sigma^{\widehat{\pi}} - \Sigma^{\pi^\star}} \le d_{\text{\rm NF}, \lambda\Sigma}(\pi^\star, \widehat{\pi})$ if $\Opnorm{\Sigma}\le 1$; and (c) follows from Theorem \ref{thm:lower_bound}. The proof is complete. 

\subsection{Proof of Theorem \ref{thm:oracle}}
\subsubsection{Proof of upper bound}
Similar to the proof of the GW upper bound, we begin with the definitions of several good events. Let $\cG = \calE_1 \cap \calE_2 \cap \calE_3$ be the good event, where
\begin{align}
\calE_1 &:= \bigg\{ \jiao{\hS_Y, \hS_m - \hS_m^{\widehat{\pi}}} \ge  \jiao{\Sigma, \hS_m - \hS_m^{\widehat{\pi}}} \nonumber \\
& \qquad - c\sqrt{\frac{d\log d+\log(1/\delta)}{n}}\Fnorm{\Sigma^{1/2}(\hS_m - \hS_m^{\widehat{\pi}})\Sigma^{1/2}} \bigg\}, \label{eq:oracle_E1} \\
\calE_2 &:= \bigg\{ \jiao{\Sigma, \hS_m - \hS_m^{\widehat{\pi}}} \ge  \jiao{\Sigma, \Sigma - \Sigma^{\widehat{\pi}}} - \frac{d}{\sqrt{m}}\Fnorm{\Sigma - \Sigma^{\widehat{\pi}^{-1} }} \bigg\}, \label{eq:oracle_E2} \\
\calE_3 &:= \bigg\{ \Fnorm{\hS_m - \hS_m^{\widehat{\pi}}} \le \Fnorm{\Sigma - \Sigma^{\widehat{\pi}}} + \frac{2d}{\sqrt{m}} \bigg\}, \label{eq:oracle_E3}
\end{align}
with a large enough numerical constant $c>0$. We show that $\bP(\calG)\ge 1-\delta$ under the oracle model. Using Hanson--Wright and triangle inequalities, the statements for $\calE_1$ and $\calE_3$ follow from the same arguments in Lemma \ref{lemma:GW_good_event} for their counterparts in \eqref{eq:GW_E1} and \eqref{eq:GW_E3}. As for $\calE_2$, we have
\begin{align*}
\jiao{\Sigma, \hS_m - \hS_m^{\widehat{\pi}}} &= \jiao{\Sigma - \Sigma^{\widehat{\pi}^{-1}}, \hS_m} \\
&\stepa{\ge} \jiao{\Sigma - \Sigma^{\widehat{\pi}^{-1}}, \Sigma} - \Fnorm{\Sigma - \Sigma^{\widehat{\pi}^{-1}}}\Fnorm{\hS_m - \Sigma} \\
&\stepb{\ge}  \jiao{\Sigma - \Sigma^{\widehat{\pi}^{-1}}, \Sigma} - \frac{d}{\sqrt{m}} \Fnorm{\Sigma - \Sigma^{\widehat{\pi}^{-1}}} \\
&= \jiao{\Sigma, \Sigma - \Sigma^{\widehat{\pi}}} - \frac{d}{\sqrt{m}} \Fnorm{\Sigma - \Sigma^{\widehat{\pi}^{-1}}}, 
\end{align*}
where (a) is the triangle inequality $\jiao{A,B}\le \|A\|_{\text{F}}\|B\|_{\text{F}}$, and (b) follows from the oracle property of $\hS_m$. This shows that $\calE_2$ holds almost surely. 

Now using exactly the same arguments in Section \ref{subsec:GW_loss_difference}, with \eqref{eq:oracle_E1}--\eqref{eq:oracle_E3} in place of \eqref{eq:GW_E1}--\eqref{eq:GW_E3}, for the GW estimator $\widehat{\pi}$ we have
\begin{align*}
    0 \ge \Fnorm{\Sigma^{\widehat{\pi}} - \Sigma}^2 - u\Fnorm{\Sigma^{\widehat{\pi}} - \Sigma} - v 
\end{align*}
under $\calG$, where
\begin{align*}
    u = 2c\sqrt{\frac{d\log d+\log(1/\delta)}{n}} + \frac{2d}{\sqrt{m}}, \qquad v = 2c\sqrt{\frac{d^2(d\log d+\log(1/\delta))}{mn}}. 
\end{align*}
By Lemma \ref{lemma:quad_bound}, this implies
\begin{align*}
\Fnorm{\Sigma^{\widehat{\pi}} - \Sigma}^2 = O(u^2+v) = O\left(\frac{d\log d+\log(1/\delta)}{n}+\frac{d^2}{m} + \sqrt{\frac{d^2(d\log d+\log(1/\delta))}{mn}}\right)
\end{align*}
holds with probability at least $1-\delta$. Integrating the tail over $\delta\in (0,1)$ gives the expectation upper bound
\begin{align*}
\bE \Fnorm{\Sigma^{\widehat{\pi}} - \Sigma}^2 = O\left(\frac{d\log d}{n} + \frac{d^2}{m} + \sqrt{\frac{d^3\log d}{mn}} \right) = O\left(\frac{d\log d}{n} + \frac{d^2}{m}\right), 
\end{align*}
where the last inequality is due to AM-GM. This proves the claimed upper bound in Theorem \ref{thm:oracle}.  
\subsubsection{Proof of lower bound}
We use the same lower bound program in Section \ref{sec:lower_bound}, with an additional specification for the oracle $\hS_m$. Recall that Section \ref{sec:lower_bound} shows the minimax lower bound
\begin{align*}
    \inf_{\widehat{\pi}}\sup_{\pi^\star, \Sigma: \Opnorm{\Sigma}\le 1} \bE_{\Sigma} \Fnorm{\Sigma^{\widehat{\pi}} - \Sigma^{\pi^\star}}^2 \gtrsim \frac{d\log d}{n}
\end{align*}
even if $m=\infty$ (i.e. known covariance $\Sigma$), the same lower bound also holds under the oracle setting by taking $\hS_m = \Sigma$. Therefore, it remains to prove that if $m\ge d$ under the oracle setting, we have
\begin{align*}
    \inf_{\widehat{\pi}}\sup_{\pi^\star, \Sigma: \Opnorm{\Sigma}\le 1} \bE_{\Sigma} \Fnorm{\Sigma^{\widehat{\pi}} - \Sigma^{\pi^\star}}^2 \gtrsim \frac{d^2}{m}. 
\end{align*}

We choose the following joint distribution of $(\pi^\star, \Sigma)$: set $\pi^\star\sim \text{Unif}(S_d)$, and conditioned on $\pi^\star$, set $\Sigma = (I_d+\eta S^{(\pi^\star)^{-1}})/2$ for $S\sim \text{Unif}(\calS_0)$, where the set of matrices $\calS_0$ is constructed in Lemma \ref{lemma:S0_construction}. We choose (recall that $m\ge d$)
\begin{align}\label{eq:eta_oracle}
 \eta = \frac{1}{2c_1\sqrt{m}} \le \frac{1}{2c_1\sqrt{d}}, 
\end{align}
and note that the loss function in \eqref{eq:loss_function} becomes
\begin{align*}
L(\pi^\star, \widehat{\pi}) &= \min_{\Sigma = (I_d+\eta S^{(\pi^\star)^{-1}})/2: S\in \calS_0} \Fnorm{\Sigma^{\pi^\star} - \Sigma^{\widehat{\pi}}}^2 \\
&= \frac{\eta^2}{4}\min_{S\in \calS_0} \Fnorm{S - S^{ (\pi^\star)^{-1} \circ \widehat{\pi} } }^2 = \frac{\eta^2}{4}\min_{S\in \calS_0} \Fnorm{S^{ (\pi^\star)^{-1} } - S^{\widehat{\pi}^{-1}}}^2. 
\end{align*}
As $\sum_{i=1}^d \mathbbm{1}(\pi_1^{-1}(i) \neq \pi_2^{-1}(i)) = \sum_{i=1}^d \mathbbm{1}(\pi_2\circ \pi_1^{-1}(i) \neq i) = \sum_{j=1}^d \mathbbm{1}(\pi_2(j) \neq \pi_1(j))$ for every $\pi_1, \pi_2\in S_d$, the same argument in Lemma \ref{lemma:p_Delta} gives $p_\Delta \le d^{-c_3d}$ for the quantity $p_\Delta$ in Lemma \ref{lemma:Fano}. Therefore,
\begin{align}\label{eq:Fano_oracle}
\bE\left[ \Fnorm{\Sigma^{\pi^\star} - \Sigma^{\widehat{\pi}}}^2 \right] \ge \frac{c_2\eta^2 d^2}{4}\left(1 - \frac{I(\pi^\star; \hS_m, Y^n) +\log 2}{c_3d\log d}\right). 
\end{align}

Next we specify the choice of $\hS_m$ and upper bound the mutual information. The choice of the oracle is simple: we take $\hS_m \equiv I_d/2$. Note that
\begin{align*}
\Opnorm{\hS_m - \Sigma} = \frac{\eta}{2}\Opnorm{S} \stepc{\le} \frac{\eta}{2}\cdot c_1\sqrt{d} \overset{\eqref{eq:eta_oracle}}{\le} \sqrt{\frac{d}{m}}, 
\end{align*}
where (c) follows from the second property of $\calS_0$. This shows that the oracle property is satisfied. 

Finally we show that $I(\pi^\star; \hS_m, Y^n) = 0$, therefore the lower bound of Theorem \ref{thm:oracle} follows from \eqref{eq:eta_oracle} and \eqref{eq:Fano_oracle}. To see it, note that $\hS_m$ is deterministic, and $\Sigma^{\pi^\star} = (I_d + \eta S)/2$ with $S\sim \text{Unif}(\calS_0)$, for any realization of $\pi^\star$. Since $Y^n\sim \calN(0,\Sigma^{\pi^\star})^{\otimes n}$, this shows that $Y^n$ and $\pi^\star$ are independent. It is consequently clear that $I(\pi^\star; \hS_m, Y^n) = 0$. 

\bibliographystyle{alpha}
\bibliography{bibliography.bib}
\end{document}